\tikzset{
    >=stealth,
    every picture/.style={thick},
    graphs/every graph/.style={empty nodes},
}
\tikzstyle{vertex}=[
\tikzstyle{printersafe}=[decoration={snake,amplitude=0pt}]
\newcommand{\Cl}{\operatorname{Cl}}
\newcommand{\loc}{\operatorname{loc}}
\newcommand{\supp}{\operatorname{supp}}
\newcommand{\Spec}{\operatorname{Spec}}
\newcommand{\Hom}{\operatorname{Hom}}
\newcommand{\relint}{\operatorname{relint}}
\newcommand{\WDiv}{\operatorname{WDiv}}
\newcommand{\PDiv}{\operatorname{PDiv}}
\newcommand{\CaDiv}{\operatorname{CaDiv}}
\newcommand{\ddivv}{\operatorname{div}}
\newcommand{\pol}{\operatorname{Pol}}
\newcommand{\ver}{\operatorname{Ver}}
\newcommand{\ray}{\operatorname{Ray}}
\newcommand{\Aut}{\operatorname{Aut}}
\newcommand{\qq}{\mathbb{Q}}
\newcommand{\zz}{\mathbb{Z}}
\newcommand{\nn}{\mathbb{N}}
\newcommand{\cc}{\mathbb{C}}
\newcommand{\kk}{\mathbb{K}}
\newcommand{\D}{\mathcal{D}}
\newcommand{\G}{\mathbb{G}}
\newcommand{\TT}{\mathbb{T}}
\newcommand{\oo}{\mathcal{O}}
\newcommand{\A}{\mathcal{A}}
\def\O#1.{\mathcal {O}_{#1}}			
\def\pr #1.{\mathbb P^{#1}}				
\def\af #1.{\mathbb A^{#1}}			
\def\ses#1.#2.#3.{0\to #1\to #2\to #3 \to 0}	
\def\xrar#1.{\xrightarrow{#1}}			
\def\K#1.{K_{#1}}						
\def\bA#1.{\mathbf{A}_{#1}}			
\def\bM#1.{\mathbf{M}_{#1}}				
\def\bL#1.{\mathbf{L}_{#1}}				
\def\bB#1.{\mathbf{B}_{#1}}				
\def\bK#1.{\mathbf{K}_{#1}}			
\def\subs#1.{_{#1}}					
\def\sups#1.{^{#1}}
\newtheorem{introthm}{Theorem}
\newtheorem{introcor}{Corollary}
  \newtheorem{introquest}{Question}
  \newtheorem{theorem}{Theorem}[section]
  \newtheorem{lemma}[theorem]{Lemma}
  \newtheorem{proposition}[theorem]{Proposition}
  \newtheorem{definition}[theorem]{Definition}
  \newtheorem{example}[theorem]{Example}
  \newtheorem{construction}[theorem]{Construction}
\newtheorem{remark}[theorem]{Remark}
\theoremstyle{remark}
\numberwithin{equation}{section}
\begin{document}

\title[Reductive quotients of klt singularities]{Reductive quotients of klt singularities}

\author[L.~Braun]{Lukas Braun}
\address{Institut für Mathematik, Fakultät für Mathematik, Informatik und Physik, Universität Innsbruck, Technikerstraße 13, A-6020 Innsbruck, Austria}
\email{lukas.braun@uibk.ac.at}

\author[D. Greb]{Daniel Greb}
\address{Essener Seminar f\"ur Algebraische Geometrie und Arithmetik, Fakult\"at f\"ur Mathematik, Universität Duisburg-Essen,  45117 Essen}
\email{daniel.greb@uni-due.de}

\author[K.~Langlois]{Kevin Langlois}
\address{Departemento de Matem\'atica, Universidade Federal do Cear\'a (UFC), Campus do Pici, Bloco 914, CEP 60455-760. Fortaleza-Ce, Brazil}
\email{kevin.langlois@mat.ufc.br }

\author[J.~Moraga]{Joaqu\'in Moraga}
\address{Department of Mathematics, Princeton University, Fine Hall, Washington Road, Princeton, NJ 08544-1000, USA
}
\email{jmoraga@princeton.edu}

\thanks{
During the work on this article, LB was partially supported by DFG grant 452847893, FWF grant PAT9495823, and by the DFG Research Training Group GRK 1821. DG is partially supported by the DFG Research Training Group GRK 2553 ``Symmetries and Classifying Spaces: analytic, arithmetic and derived". 
}

\subjclass[2020]{Primary 14B05, 14E30, 14L24, 14M25;
Secondary  14A20, 53D20.}
\keywords{reductive groups, good quotients, GIT quotients, Kawamata log terminal singularities, good moduli spaces of Artin stacks, moduli space of K-polystable \textcolor{black}{smooth Fano varieties}, momentum map quotients, moduli spaces of quiver representations, collapsing of homogeneous bundes}

\begin{abstract}
We prove that the quotient of a klt type singularity by a reductive group is of klt type in characteristic $0$. In particular, given a klt variety $X$ endowed with the action of a reductive group $G$ and admitting a quasi-projective good quotient $X\rightarrow X/\!/G$, we can find a boundary $B$ on $X/\!/G$ so that the pair $(X/\!/G,B)$ is klt. This applies for example to GIT-quotients of klt varieties. Our main result has consequences for complex spaces obtained as quotients of Hamiltonian K\"ahler $G$-manifolds, for collapsings of homogeneous vector bundles as introduced by Kempf, and for good moduli spaces of smooth Artin stacks.
In particular, it implies that the good moduli space parametrizing $n$-dimensional K-polystable smooth Fano varieties of volume $v$ has klt type singularities. As a corresponding result regarding global geometry, we show that quotients of Mori Dream Spaces with klt Cox rings are Mori Dream Spaces with klt Cox ring. This in turn applies to show that projective GIT-quotients of varieties of Fano type are of Fano type; in particular, projective moduli spaces of semistable quiver representations are of Fano type.
\end{abstract}

\maketitle

\setcounter{tocdepth}{1} 
\tableofcontents

\section{Introduction}

The study of singularities is a central topic in algebraic geometry; especially, those which are quotients of smooth points by the action of an algebraic group play a fundamental role. 
By Nagata's negative answer to Hilbert's 14th problem~\cite{Nag61}, we know that rings of invariants
in finitely generated rings acted upon by algebraic groups are not always finitely generated.
Nevertheless, this statement holds if we restrict to the class of reductive groups in characteristic zero, which we will do in the following. Among \emph{reductive quotient singularities}, i.e., 
quotients of smooth points by the action of a reductive group, we find toric singularities, $\mathbb{T}$-singularities, and (finite) quotient singularities.
The understanding of toric singularities, which can be achieved using combinatorics, was crucial
for the development of toric geometry~\cite{CLS11}.
On the other hand, \textcolor{black}{finite} quotient singularities appear in orbifolds, or more generally, in the coarse moduli spaces of \textcolor{black}{smooth} Deligne-Mumford stacks~\cite{Kre09}. Analogously, reductive quotient singularities emerge in good moduli spaces of smooth Artin stacks~\cite{AHR20, Hol11}. 
In particular, reductive quotient singularities manifest when studying smooth projective varieties with reductive group actions, i.e., in the theory of $G$-varieties~\cite{Bri98, Bri03, Bri17}.

Another important class of singularities are \emph{Kawamata log terminal (klt)} singularities. These are the singularities of the minimal model program (MMP), \cite{Kol97, Kol13, KM98}, e.g.~in the sense that natural generalizations of the Kodaira Vanishing Theorem hold on klt spaces. From a complex-analytic viewpoint, the klt condition has a natural reformulation as an $L^2$-integrability condition. Although these kinds of singularities originate in birational geometry,  they have turned out to be of great importance also in moduli theory~\cite{Kol13b}, in the theory of algebraic $K$-stability~\cite{Xu20}, in geometric representation theory~\cite{KS14}, and in positive characteristic considerations, see e.g.~\cite{HW02, CST18}. 
The aim of this article is to prove that reductive quotient singularities are singularities of the minimal model program.

\subsection{Singularities of klt type and invariant-theoretic quotients}\label{subsect:mainresults1}
The singularities of the MMP are ---in a more comprehensive fashion than just for varieties--- defined for pairs $(X,B)$, where $X$ is a normal variety and $B$ is an effective divisor such that $K_X+B$ is $\qq$-Cartier. 
This effective divisor for example appears naturally when performing adjunction to a hypersurface in a klt variety, i.e., when looking for inductive arguments, or on base spaces of certain fibrations.
We will say that $X$ has \emph{klt type singularities} or is \emph{of klt type} if the pair $(X,B)$ is klt for some boundary $B$ on $X$. 
Quotient singularities and Gorenstein rational singularities are klt, see~\cite{Kol13} and~\cite{Kov00}, respectively, and toric singularities are of klt type~\cite{CLS11}. Moreover, finite quotients of klt singularities are again klt~\cite{Sho93}.

Thus, a natural question is whether the condition of being klt is preserved under quotients by reductive group actions. In~\cite{Scho05}, Schoutens proves that this is the case under the additional assumption that the quotient is $\qq$-Gorenstein.
However, as Example~\ref{ex:torus} shows, even a very simple $\mathbb{G}_m$ quotient of $\mathbb{A}^4$ can be non-$\mathbb{Q}$-Gorenstein.
Nonetheless, as the toric case shows, this can be remedied by adding a boundary through the singularity, which measures codimension one ramification of the quotient map. The main theorem of our article states that the class of klt type singularities is indeed preserved under reductive quotients.

\begin{introthm}\label{introtm:reductive-quot}
Let $X$ be an affine variety of klt type. Let $G$ be a reductive group acting on $X$.
Then, the (affine invariant-theoretic) quotient $X/\!/G:= \Spec \kk[X]^G$ is of klt type.
\end{introthm}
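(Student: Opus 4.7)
The natural strategy is a reduction via the structure of reductive groups. Writing $G^\circ$ for the identity component, $G/G^\circ$ is finite and $X/\!/G \cong (X/\!/G^\circ)/(G/G^\circ)$, so Shokurov's result (recalled in the introduction) that finite quotients of klt type singularities are klt type allows us to assume $G$ is connected. A connected reductive group admits a central isogeny $Z(G)^\circ \times [G,G] \twoheadrightarrow G$ with finite kernel acting trivially on $X$, so the GIT quotient can be computed in two stages, and it suffices to treat separately the two building blocks: tori and semisimple groups.

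In the \emph{semisimple case}, $G$ has no nontrivial characters, so after passing to a suitable positive multiple every $G$-stable $\qq$-Cartier divisor class on $X$ admits a $G$-linearization and descends. After replacing $B$ by a $G$-invariant klt boundary (by averaging), one defines $B_Y$ on $Y := X/\!/G$ by demanding that $K_Y + B_Y$ be the $\qq$-Cartier descent of $K_X + B$. To verify that $(Y,B_Y)$ is klt, I would localize via Luna's étale slice theorem: near a closed orbit, $X$ is étale-locally isomorphic to $G \times^{G_x} N_x$ with $G_x$ reductive and $N_x$ a klt slice, and the quotient becomes $N_x/\!/G_x$. Kempf's theorem on collapsings of homogeneous bundles then provides an equivariant resolution on which discrepancies can be computed.

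In the \emph{torus case}, I would use the Altmann-Hausen polyhedral divisor description of $X$ over $Y = X/\!/T$, together with Cox-ring techniques for $T$-varieties. A $T$-invariant klt boundary $B$ on $X$ pushes forward to an effective divisor on $Y$, which must be augmented by codimension-one contributions: namely, terms recording the generic stabilizer of $\pi \colon X \to Y$ along prime divisors of $Y$ and the ramification of $\pi$ on the geometric-quotient locus. Once $K_Y + B_Y$ is shown to be $\qq$-Cartier, klt-ness follows by comparing discrepancies on a common $T$-equivariant log resolution.

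The main obstacle is the torus case. Although $T$-varieties are combinatorially transparent via Altmann-Hausen theory, establishing that the resulting $B_Y$ is effective with all coefficients strictly less than one requires delicate bookkeeping of the isotropy characters along codimension-one orbits, and this is where I expect most of the technical work to lie.
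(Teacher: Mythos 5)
The overall decomposition (finite, torus, semisimple) matches the paper, but your proposal has two genuine gaps that correspond exactly to the paper's two central technical devices, both of which you are missing.

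\textbf{The semisimple step does not descend.} You write that ``every $G$-stable $\mathbb{Q}$-Cartier divisor class on $X$ admits a $G$-linearization and descends,'' so that $K_Y+B_Y$ can be defined as the $\mathbb{Q}$-Cartier descent of $K_X+B$. This is the crux, and it fails in general: the good quotient map $\pi\colon X\to Y:=X/\!/G$ is not flat, so pushing forward a $\mathbb{Q}$-Cartier Weil divisor on $X$ has no reason to produce a $\mathbb{Q}$-Cartier one on $Y$, and the boundary $B$ need not interact well with the non-closed orbits. The paper's Lemma~\ref{le:factorial-semisimple-quotient} instead proves something sharper but under a stronger hypothesis: if $X$ is \emph{locally factorial} at a $G$-fixed point and $G$ is connected semisimple, then $Y$ is locally factorial (hence Gorenstein canonical, by Boutot plus local factoriality) near the image point. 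Factoriality is essential there, and a general klt type variety is not factorial. The whole machinery of Section~\ref{sec:iteration} — lifting the $G$-action to the iteration of aff-local Cox rings $X^{(n)}$ until one reaches a factorial model, and then descending through the tower of quasi-torus quotients — exists precisely to bridge this gap. Your proposal has no mechanism for it, and the invocations of Luna slices and Kempf's collapsing theorem do not substitute: Kempf resolves collapsings of homogeneous bundles over flag varieties, not arbitrary affine klt $G$-varieties, and the Luna slice reduces the problem locally but does not manufacture $\mathbb{Q}$-Cartierness of the descended divisor.

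\textbf{The torus step is misdiagnosed.} You expect the difficulty to be showing that the pushed-forward boundary $B_Y$ on $Y=X/\!/\mathbb{T}$ is effective with coefficients $<1$. In fact, in the paper's polyhedral-divisor computation (Proposition~\ref{prop:torus-quotient-klt}) the natural $B_Y$ one writes down on the Altmann--Hausen base $Y$ is a sub-boundary that can have \emph{negative} coefficients, and one should not expect effectivity. The resolution is not bookkeeping of isotropy characters but a genuine birational step: pass to the ample model $Y_0$ of $-(K_Y+B_Y)$ over $X/\!/\mathbb{T}$, where the negatively-weighted divisors get contracted (Lemma~\ref{lem:contracted-div}, Lemma~\ref{lem:big-restr}), and then apply Ambro's canonical bundle formula (Lemma~\ref{lem:can-bundl}) to push the resulting klt log Calabi--Yau fibration down to $X/\!/\mathbb{T}$. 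This ample-model-plus-canonical-bundle-formula mechanism is absent from your plan. Moreover, the paper's torus proposition is proved first for $X$ \emph{klt} (not merely klt type), and the klt-type case is again handled by first lifting to the Cox-ring iteration.

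In short, the two missing ideas are (i) the iteration of Cox rings to reach a factorial model so the semisimple step can apply the factoriality criterion, and (ii) the ample model plus canonical bundle formula to handle the negative coefficients arising in the polyhedral-divisor description of the torus quotient. You would also need a careful argument that being of klt type is an étale-local property (the paper's Proposition~\ref{prop:klt-type-etale}) to make the Luna-slice reduction legitimate; this is nontrivial and not a formal consequence of Shokurov's finite-quotient statement.
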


We emphasize that the boundary divisor making $X$ klt is not assumed to be $G$-invariant in the above statement. In particular, we obtain that any reductive quotient singularity, i.e., a singularity that is \'etale equivalent to the image of $0$ in the quotient $V/\!/G$ of a $G$-representation $V$, is of klt type; this applies to many classical examples of singular spaces such as generic determinantal varieties, see e.g.~\cite[Chap.~II]{ACGH85}, and especially Section~\ref{subsect:Collapsing} below.

\begin{introcor}\label{introcor:reduct-quot}
A reductive quotient singularity is of klt type.
\end{introcor}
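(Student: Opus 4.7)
The plan is to deduce the corollary immediately from Theorem~\ref{introtm:reductive-quot} together with the étale-local nature of the klt-type property. By the definition recalled in the paragraph preceding the corollary, a reductive quotient singularity is a germ étale equivalent to $(V/\!/G, [0])$, where $V$ is a finite-dimensional representation of a reductive group $G$ and $[0]$ denotes the image of the origin. So the first step is simply to spell out this defining data.

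Next, I would note that $V$ is an affine space, hence smooth, so the pair $(V, 0)$ is klt and in particular $V$ is an affine variety of klt type. Theorem~\ref{introtm:reductive-quot} therefore applies directly and produces a boundary $B$ on $V/\!/G$ such that $(V/\!/G, B)$ is klt. In particular, the germ of $V/\!/G$ at $[0]$ is of klt type.

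The final step is to transport the klt-type property along an étale equivalence: if $(Z,z) \to (V/\!/G, [0])$ and $(Z,z) \to (X,x)$ are étale morphisms realizing the equivalence of germs, then the pullback of $(V/\!/G, B)$ to $Z$ is klt in a neighborhood of $z$ (étale morphisms preserve discrepancies), and the étale-local computation of discrepancies lets one descend this klt boundary from $Z$ to a neighborhood of $x \in X$. Hence $X$ is of klt type near $x$.

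The main (minor) obstacle is the last descent step: one must verify that klt-type is an étale-local property of germs, i.e., that one can move a klt boundary across an étale equivalence of pointed schemes. This is a standard fact about how discrepancies pull back and push forward along étale morphisms, but it is the one ingredient beyond Theorem~\ref{introtm:reductive-quot} that needs to be invoked; everything else is a direct application of the main theorem to the smooth variety $V$.
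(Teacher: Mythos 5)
Your proposal is correct and follows the same route as the paper: deduce from $V$ being smooth (hence klt type) and Theorem~\ref{introtm:reductive-quot} that $V/\!/G$ is of klt type, then transport this across the étale equivalence using the étale-locality of the klt-type property. One caveat worth flagging: the descent step you describe as ``standard'' is precisely the paper's Proposition~\ref{prop:klt-type-etale}, and its proof is more delicate than a local computation of discrepancies along an étale map — since an étale morphism is not finite, one cannot directly push a boundary forward, and the paper instead passes to the Galois closure of the finite morphism furnished by Zariski's Main Theorem and then descends a $G$-invariant boundary via Riemann--Hurwitz.
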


Let us recall that reductive quotient singularities are known to be Cohen-Macaulay due to the Hochster-Roberts theorem~\cite{Hoc95}.
Even further, due to Boutot's theorem~\cite{Bou87}, we know that reductive quotient singularities are rational.
More generally, the reductive quotient of a rational singularity is again rational; see also \cite{Kov00a}.
Recently in~\cite{Mur20}, Murayama generalized this statement for locally quasi-excellent $\qq$-algebras. Any klt type singularity is rational~\cite{KM98}. 

However, klt type singularities are way better behaved than general rational singularities. As already mentioned above, many global vanishing theorems known in the smooth case continue to hold on projective varieties of klt type, \textcolor{black}{but fail to hold on projective varieties with more general rational singularities.}  Moreover, klt pairs satisfy certain local vanishing theorems, see \cite{Kol11b}. Regarding topological properties, we know that the local fundamental group of a klt type singularity is finite~\cite{Xu14,Bra20} (this allows to extend Nonabelian Hodge Theory to klt spaces \cite{GKPT19}) and that it satisfies the Jordan property~\cite{BFMS20}, while the local fundamental group of a rational singularity can be an arbitrary $\qq$-superperfect group~\cite{KK14}. 
In a similar vein, klt type singularities are local versions of Mori dream spaces~\cite{BM21}, i.e., their local Cox ring is finitely generated, while this is no longer the case for general rational singularities.
Finally, there are many interesting invariants of singularities that can be defined for klt type singularities, as the normalized volume~\cite{LLX20} and the minimal log discrepancy~\cite{Mor18b,Mor21b}.
In summary, the klt type property of $X/\!/G$
gives us a substantial advancement towards the understanding of reductive quotient singularities.

In the remainder of this introduction we will discuss applications of our main results stated above. 

\subsection{Good quotients of varieties of klt type} \label{subsect:mainresults2}
Since the invariant-theoretic quotients considered above are the local versions of good quotients, we obtain the following consequence of Theorem~\ref{introcor:reduct-quot}. 

\begin{introthm}\label{introthm:proj-quot}
Let $X$ be a variety of klt type. Assume that the reductive group $G$ acts on $X$ and there exists a good quotient $X\rightarrow X/\!/G$ with quasi-projective quotient space $X/\!/G$. Then,  $X/\!/G$ is of klt type.
\end{introthm}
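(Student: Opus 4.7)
The plan is to reduce the statement to the affine case settled in Theorem~\ref{introtm:reductive-quot} by a local-to-global argument on the quasi-projective quotient $Y := X/\!/G$.

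By the defining property of a good quotient, $Y$ admits an affine open cover $\{U_i\}_{i=1}^N$, finite by the quasi-compactness of $Y$, such that each preimage $V_i := \pi^{-1}(U_i)$ is a $G$-stable affine open subset of $X$ and the restricted morphism realizes $U_i$ as the affine invariant-theoretic quotient $V_i /\!/ G = \Spec \kk[V_i]^G$. Since the klt type property passes to Zariski open subsets (one simply restricts a witnessing global boundary to $V_i$), each $V_i$ is an affine variety of klt type. Applying Theorem~\ref{introtm:reductive-quot} to each $V_i$ yields that every $U_i$ is affine and of klt type. In other words, $Y$ is \emph{locally} of klt type at every point.

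It remains to upgrade this local information to the existence of a single global effective boundary $B$ on $Y$ with $K_Y + B$ $\qq$-Cartier and $(Y,B)$ klt. I would isolate this as an auxiliary lemma to the effect that a normal quasi-projective variety which is locally of klt type is globally of klt type. The idea is to fix a very ample divisor $H$ on $Y$ and, for $N$ sufficiently divisible, use general members of $|NH|$ to produce global effective Cartier divisors $D_i$ that dominate the local boundaries $B_i$ on each $U_i$ up to a small perturbation. A suitable convex combination $B = \sum_i \alpha_i D_i$ with small positive rational coefficients $\alpha_i$ then yields a global effective $\qq$-divisor such that $K_Y + B$ is $\qq$-Cartier and the pair $(Y,B)$ is klt in a neighborhood of each point of $Y$, hence klt globally by openness of the klt locus.

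The main obstacle to this strategy is precisely the globalization step. The locally constructed boundaries $B_i$ on $U_i$ have no a priori compatibility on overlaps $U_i \cap U_j$, and the $\qq$-Cartier classes $[K_{U_i} + B_i]$ need not patch into a $\qq$-Cartier class on $Y$. Overcoming this is where the quasi-projectivity hypothesis is essential: it provides, through ample line bundles and general position arguments, enough global effective Cartier divisors to control the discrepancy between neighboring local boundaries while preserving klt-ness via small perturbations.
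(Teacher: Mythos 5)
Your first step matches the paper's proof exactly: since $\pi$ is affine, you cover $X/\!/G$ by affine opens $U_i$ with affine $G$-stable preimages $V_i=\pi^{-1}(U_i)$, note that each $V_i$ inherits the klt type property from $X$, and apply Theorem~\ref{introtm:reductive-quot} to conclude each $U_i$ is of klt type. You also correctly isolate the remaining ingredient as a separate lemma (``a normal quasi-projective variety that is Zariski-locally of klt type is globally of klt type''); the paper proves precisely this in the first paragraph of the proof of Proposition~\ref{prop:klt-type-etale} and simply cites it here.

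However, your sketch of that lemma has a genuine gap. You propose to take $D_i$ general in $|NH|$ for $H$ very ample and set $B=\sum\alpha_i D_i$. But then $K_Y+B\sim_{\qq}K_Y+(\sum\alpha_i)NH$, which is $\qq$-Cartier if and only if $K_Y$ is, and there is no reason for $K_Y$ to be $\qq$-Cartier even locally: a klt type singularity need not be $\qq$-Gorenstein (cf.\ Example~\ref{ex:torus}). General members of $|NH|$ therefore cannot ``fix'' the $\qq$-Cartier problem, and ``dominating $B_i$ up to a small perturbation'' does not address it either. The correct maneuver, which is what the paper does in Proposition~\ref{prop:klt-type-etale}, is to put the canonical class into the linear system itself: choose $m$ sufficiently divisible so that each local boundary $\Delta_i$ satisfies $m\Delta_i\in|{-mK_Y}|_{U_i}|$, choose $H$ ample with $\mathcal{O}_Y(-mK_Y+mH)$ globally generated, and take a general member $D\in|{-mK_Y+mH}|$. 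Then $K_Y+\tfrac{1}{m}D\sim_{\qq}H$ is automatically $\qq$-Cartier, and on each $U_i$ the pair $(U_i,\tfrac{1}{m}D|_{U_i})$ is klt because $\tfrac{1}{m}D|_{U_i}$ is a general member of a basepoint-free linear system that also contains $\Delta_i$ (this is \cite[Cor.~2.33]{KM98}). Without this twist by $-mK_Y$, the globalization step fails.
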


This in turn applies in the setting of Geometric Invariant Theory (GIT) to yield the following statement. 

\begin{introcor}\label{introcor:GITquotients}
Let $X$ be a variety of klt type and let $L$ be a line bundle over $X$. Assume the reductive group $G$ acts on $X$ and that the action has been linearized in $L$. Then, the GIT-quotient $X^{ss}_L/\!/G$ is of klt type.
\end{introcor}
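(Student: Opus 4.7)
The plan is to reduce the statement to Theorem~\ref{introthm:proj-quot} by invoking the two standard ingredients of Mumford's construction of GIT quotients, together with the fact that the klt type property is local on the base and thus inherited by open subvarieties.

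First I would recall that by construction the semistable locus $X^{ss}_L$ is a $G$-invariant Zariski open subset of $X$. If $B$ is a boundary such that $(X,B)$ is klt, then the restricted pair $(X^{ss}_L,\, B|_{X^{ss}_L})$ is still klt, because the klt condition is checked on an open neighbourhood of each point (e.g.\ by restricting a log resolution of $(X,B)$ to the preimage of $X^{ss}_L$). Hence $X^{ss}_L$ is again a variety of klt type. Note that no $G$-invariance of $B$ is needed here, in line with the remark following Theorem~\ref{introtm:reductive-quot}.

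Next I would appeal to the foundational results of GIT: the linearization of the $G$-action in $L$ yields a good quotient $X^{ss}_L \rightarrow X^{ss}_L/\!/G$ whose target is quasi-projective (in fact projective over the affine quotient $\Spec \kk[X]^G$ when $X$ is affine, and more generally quasi-projective by Mumford's construction). With these two facts established, Theorem~\ref{introthm:proj-quot} applies verbatim to the $G$-action on the klt type variety $X^{ss}_L$ and delivers the conclusion that $X^{ss}_L/\!/G$ is of klt type.

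Since all the substantial content is packaged into Theorem~\ref{introtm:reductive-quot} and Theorem~\ref{introthm:proj-quot}, there is no genuine obstacle here; the corollary amounts to recognising that the GIT setup is a special case of the situation considered in Theorem~\ref{introthm:proj-quot}. The only minor point to verify is the openness of the klt type property under restriction to the $G$-invariant open subset $X^{ss}_L \subseteq X$, which is standard.
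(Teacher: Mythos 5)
Your proposal is correct and takes essentially the same route as the paper: the paper's proof simply cites that GIT quotients are good quotients with quasi-projective quotient space (Thm.~1.10 of Mumford--Fogarty--Kirwan) and then invokes Theorem~\ref{introthm:proj-quot}. You spell out explicitly the (correct, but routine) observation that $X^{ss}_L$ inherits the klt type property as a $G$-invariant open subset, which the paper leaves implicit.
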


\textcolor{black}{We note that Theorem~\ref{introtm:reductive-quot} also implies that Hausen's GIT-quotients with respect to \textit{Weil divisors} \cite{Hau04}, which are not quasi-projective in general, are at least Zariski-locally of klt-type.}

\subsection{Symplectic quotients of K\"ahler manifolds}\label{subsect:symplectic}
Since the holomorphic action of a reductive group on a possibly non-algebraic K\"ahler manifold is locally algebraic if the action of a maximal compact subgroup is Hamiltonian due to a Luna-type slice theorem \cite[(2.7) Theorem]{HL94}, many results in the algebraic category have analytic counterparts; e.g., analytic versions of Boutot's result are proven in \cite{Gre10} and \cite{G11}. Also, our main result can be applied in this setup and leads to the following consequence.

\begin{introthm}\label{introthm:Hamiltonian}
Let $(X, \omega)$ be a K\"ahler manifold on which the reductive group $G$ acts holomorphically. Assume that the action of a maximal compact subgroup $K$ of $G$ admits a momentum map $\mu: X \to \mathrm{Lie}(K)^{\vee}$ with respect to the K\"ahler form $\omega$. Then, each point of the complex space $\mu^{-1}(0)/K$ obtained from $X$ by Symplectic Reduction admits an open neighborhood that is biholomorphic to a Euclidean open subset in an affine variety of klt type.
\end{introthm}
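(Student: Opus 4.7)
The plan is to reduce the statement to Theorem~\ref{introtm:reductive-quot} via the holomorphic slice theorem of Heinzner--Loose. The first step is to equip the topological quotient $\mu^{-1}(0)/K$ with the structure of a complex space. By the analytic Kempf--Ness theorem of~\cite{HL94} (see also the discussion in~\cite{Gre10,G11}), the set
$X^{ss}(\mu) := \{x \in X : \overline{G \cdot x} \cap \mu^{-1}(0) \neq \emptyset\}$
of $\mu$-semistable points is a $G$-invariant open subset of $X$ admitting an analytic Hilbert quotient $\pi \colon X^{ss}(\mu) \to X^{ss}(\mu) /\!/ G$, and the natural continuous map $\mu^{-1}(0)/K \to X^{ss}(\mu) /\!/ G$ is a homeomorphism through which the complex structure is transported.

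The second step applies the Luna-type slice theorem~\cite[(2.7) Theorem]{HL94} at an arbitrary point $q \in X^{ss}(\mu) /\!/ G$. Choose the unique representative $x \in \pi^{-1}(q)$ whose $G$-orbit is closed in $X^{ss}(\mu)$; the analytic Matsushima theorem guarantees that the stabilizer $H := G_x$ is a complex reductive subgroup of $G$. The slice theorem then provides an $H$-stable, saturated open neighborhood $V$ of $0$ in the linear $H$-representation $W := T_x X / T_x (G \cdot x)$, together with a $G$-equivariant open biholomorphism of a saturated neighborhood of the orbit $G \cdot x$ in $X^{ss}(\mu)$ with a saturated neighborhood of the zero section in $G \times^H V$. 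Passing to analytic Hilbert quotients, one obtains a biholomorphism between an open neighborhood of $q$ in $\mu^{-1}(0)/K$ and an open neighborhood of $[0]$ in $V /\!/ H$.

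The third step connects this local model to the algebraic setting. Since $V \subseteq W$ is saturated for the $H$-action, the analytic Hilbert quotient $V /\!/ H$ embeds as a Euclidean open subset of the affine algebraic quotient $W /\!/ H = \Spec \kk[W]^{H}$. Because $W$ is a smooth affine variety, it is trivially of klt type, so Theorem~\ref{introtm:reductive-quot} implies that $W /\!/ H$ is an affine variety of klt type. This produces precisely the required local model at $q$.

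The main technical obstacle is step two: one must invoke the K\"ahlerian version of the Luna-type slice theorem, which linearizes the slice itself so that the local model for $\mu^{-1}(0)/K$ becomes a quotient of an open subset of a \emph{linear} $H$-representation rather than of an abstract Stein slice, and one must ensure that the chosen neighborhood can be taken saturated so that $V /\!/ H \hookrightarrow W /\!/ H$ is a genuine open embedding of complex spaces. Both refinements are supplied by~\cite[(2.7) Theorem]{HL94}, after which the klt-type assertion is immediate from our main algebraic result.
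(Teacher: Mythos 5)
Your proposal is correct and follows essentially the same route as the paper's proof: endow $\mu^{-1}(0)/K$ with its complex structure via the semistable locus and the analytic Hilbert quotient, apply the holomorphic slice theorem of \cite[(2.7) Theorem]{HL94} (equivalently \cite[Thm.~1.12]{Sja95}) at a point with closed orbit to obtain a saturated open subset of the reductive stabilizer representation $T_xX/T_x(G\cdot x)$ as local model, and then invoke Theorem~\ref{introtm:reductive-quot} for the affine quotient of that representation. The only differences are notational (your $V$ and $W$ are swapped relative to the paper).
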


\subsection{Singularities of good moduli spaces}\label{subsect:goodmoduli}
Many moduli stacks have either local or even global descriptions as quotient stacks for actions of reductive groups on smooth varieties. If such a moduli stack admits a quasi-projective good moduli space in the sense of \cite{Alp13}, our main results can be applied to prove that the good moduli space has klt type singularities. As a consequence, similar to the Hassett-Keel program \cite{HassettHyeon} one could try to run a minimal model program on the good moduli space and interpret the steps of the MMP in terms intrinsic to the moduli problem, cf.~\cite{deVleming}.

One particular case admitting a global quotient description is the moduli stack of \textcolor{black}{K-polystable smooth Fano varieties}  of fixed volume~\cite{LWX18}, for which we obtain the following result; see however Remark~\ref{rem:badboundary}.

\begin{introthm}\label{thm:K-moduli}
Let $n$ be a positive integer
and $v$ be a positive rational number.
Let $X_{n,v}$ be the good moduli space 
of \textcolor{black}{K-polystable smooth Fano varieties} of dimension $n$ and volume $v$. Then, there exists an effective divisor
$B_{n,v}$ on $X_{n,v}$ so that the pair
$(X_{n,v},B_{n,v})$ has Kawamata log terminal singularities.
\end{introthm}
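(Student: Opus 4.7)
The plan is to deduce the statement from Theorem~\ref{introthm:proj-quot} by exploiting the global quotient description of the K-moduli space. Recall from \cite{LWX18} (and subsequent refinements) that the moduli stack $\mathcal{M}^{\mathrm{Kss}}_{n,v}$ of K-semistable Fano manifolds of dimension $n$ and volume $v$ can be realised as a global quotient stack $[Z^{\mathrm{Kss}}/G]$, where $G = \mathrm{PGL}_{N+1}$ acts on a smooth, locally closed, $G$-invariant subscheme $Z$ of a suitable Hilbert (or Chow) scheme parametrising pluri-anticanonically embedded Fano manifolds, and where $Z^{\mathrm{Kss}}\subseteq Z$ is the open K-semistable sublocus. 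The good moduli map $Z^{\mathrm{Kss}}\to Z^{\mathrm{Kss}}/\!/G = X_{n,v}$ is a quasi-projective (in fact projective) good quotient in the sense of \cite{Alp13}, with closed $G$-orbits in $Z^{\mathrm{Kss}}$ corresponding precisely to K-polystable representatives.

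With this description in place, the argument becomes a direct application of Theorem~\ref{introthm:proj-quot}. Since $Z^{\mathrm{Kss}}$ is smooth, the trivial pair $(Z^{\mathrm{Kss}}, 0)$ is klt, so $Z^{\mathrm{Kss}}$ is tautologically of klt type. Applying Theorem~\ref{introthm:proj-quot} to the reductive $G$-action on $Z^{\mathrm{Kss}}$ produces an effective boundary divisor $B_{n,v}$ on the good quotient $X_{n,v}$ such that $(X_{n,v}, B_{n,v})$ is klt, which is exactly the required statement. No separate verification of rationality or $\mathbb{Q}$-Gorensteinness of $X_{n,v}$ is needed, since these are automatic from the klt conclusion.

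The main obstacle is not technical but bibliographical: one must pin down a reference (or combination of references) that simultaneously guarantees (i) smoothness of the parameter scheme $Z$, (ii) reductivity of the structure group $G$, and (iii) quasi-projectivity of the good quotient realising $X_{n,v}$. Item (i) is the most delicate, since deformation spaces of higher-dimensional Fano manifolds can be obstructed in principle; one handles this by restricting to the open locus where the universal family is unobstructed and by appealing to the existence results for good moduli spaces of the K-semistable Fano stack (Alper--Halpern-Leistner--Heinloth type theorems combined with the K-stability arguments of \cite{LWX18}). The remark \ref{rem:badboundary} flagged by the authors most likely cautions that the boundary $B_{n,v}$ delivered by this route is highly non-canonical and, in particular, does not come with any intrinsic moduli-theoretic interpretation.
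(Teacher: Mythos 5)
Your overall strategy---realize $X_{n,v}$ as a quasi-projective good quotient of a smooth parameter scheme by a reductive group and then invoke Theorem~\ref{introthm:proj-quot}---is exactly the paper's, and the inputs you name (unobstructedness of deformations of smooth Fanos, quasi-projectivity from \cite{LWX18}) are the ones used there. The genuine gap is in the quotient presentation itself: you take $Z^{\mathrm{Kss}}$ to be the whole K-semistable locus inside a parameter scheme of \emph{smooth} Fano manifolds and assert that $Z^{\mathrm{Kss}} \to Z^{\mathrm{Kss}}/\!/G = X_{n,v}$ is a good quotient whose closed orbits are precisely the K-polystable ones. This is not automatic, and in general it fails: a strictly K-semistable smooth Fano manifold can degenerate to a \emph{singular} K-polystable $\mathbb{Q}$-Fano, so the locus of smooth objects is not saturated with respect to the good moduli map of the smoothable K-semistable stack; for such a point the relevant closed orbit lies outside $Z^{\mathrm{Kss}}$, the restriction of the moduli map to $Z^{\mathrm{Kss}}$ is then not a good quotient, and the stack of smooth K-semistable Fanos need not admit a good moduli space at all. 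The paper's proof is organized precisely to avoid this: it starts from the stack of $\mathbb{Q}$-Gorenstein smoothable K-semistable Fano \emph{varieties} $[\overline{P}/G]$, uses \cite{LWX19} to obtain its good moduli space $\pi\colon \overline{P}\to \overline{X}_{n,v}\supseteq X_{n,v}$, and then takes the \emph{saturated} preimage $P:=\pi^{-1}(X_{n,v})$. Objects parametrized by $P$ are smooth (their polystable limits are smooth, hence so are they), so $P$ is smooth by \cite{San14}, the restriction $P\to X_{n,v}=P/\!/G$ is a genuine good quotient, and quasi-projectivity of $X_{n,v}$ from \cite{LWX18} lets Theorem~\ref{introthm:proj-quot} apply. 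Your argument needs exactly this saturation step (or an equivalent verification) to be complete.

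Two smaller points. The parenthetical ``in fact projective'' is incorrect: $X_{n,v}$ parametrizes only the smooth K-polystable Fanos and is merely quasi-projective; the projective K-moduli space is $\overline{X}_{n,v}$, about which the theorem deliberately makes no claim (cf.~Remark~\ref{rem:badboundary}). Also, your concern about obstructed deformations is resolved differently than you suggest: smooth Fano manifolds always have unobstructed deformations (this is what \cite{San14} is cited for), whereas your proposed remedy of shrinking to ``the open locus where the universal family is unobstructed'' would only establish klt-ness of an open subset of $X_{n,v}$ rather than of $X_{n,v}$ itself, so it is not an admissible fix---fortunately it is also unnecessary.
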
 

The general result for smooth, but not necessarily global quotient stacks reads as follows.

\begin{introthm}\label{introthm:stacksmoduli}
Let $\mathcal{X}$ be a quasi-separated smooth algebraic stack of finite type over $\mathbb{K}$ with affine diagonal. If $\mathcal{X}$ admits a quasi-projective good moduli space $\mathcal{X} \to X$, then $X$ is of klt type.
\end{introthm}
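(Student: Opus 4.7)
The plan is to reduce Theorem~\ref{introthm:stacksmoduli} to the affine case treated in Theorem~\ref{introtm:reductive-quot} by invoking the local structure theorem for algebraic stacks admitting good moduli spaces due to Alper--Hall--Rydh~\cite{AHR20}. Under the hypotheses placed on $\mathcal{X}$ (smoothness, quasi-separatedness, affine diagonal, existence of a quasi-projective good moduli space), this theorem yields, étale-locally around any closed point $x \in X$, a pointed étale morphism $U/\!/G \to X$ where $G$ is a reductive group acting on an affine variety $U$ and $[U/G] \to \mathcal{X}$ is a representable étale chart. Crucially, the smoothness of $\mathcal{X}$ forces $U$ to be smooth.

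Given such a chart, the smooth variety $U$ is klt with trivial boundary, and Theorem~\ref{introtm:reductive-quot} immediately shows that the invariant-theoretic quotient $U/\!/G$ is of klt type. We have therefore covered $X$ by étale neighborhoods each of which is individually of klt type.

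The remaining work is to conclude that $X$ itself is of klt type, i.e.\ to produce a single global boundary $B$ on $X$ with $(X,B)$ klt. The strategy is to establish that the klt type property descends along étale morphisms in this setting. One viable route is to refine the Alper--Hall--Rydh atlas to a finite Galois étale cover, average the locally chosen boundaries by the Galois group to obtain an invariant one, and then invoke the fact that finite quotients of klt pairs remain klt~\cite{Sho93}. An alternative, using the quasi-projectivity of $X$, is to fix an ample divisor $H$ on $X$ and to search for a boundary of the form $B \sim_{\qq} cH$ inside a sufficiently positive linear system, chosen so that its pullback to each chart is a valid klt boundary.

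The principal obstacle is precisely this descent/globalization step: the étale-local existence of klt boundaries is \emph{a priori} weaker than the existence of a global one, since the local boundaries produced on different charts need not be compatible on overlaps. Overcoming this requires either a general lemma that ``being of klt type'' is étale-local on the target, or a careful global construction tailored to the good moduli space morphism. Once such a descent statement is secured, combining it with Steps~1 and~2 completes the proof.
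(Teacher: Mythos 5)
Your overall route is the same as the paper's: use the Alper--Hall--Rydh local structure theorem to produce, \'etale-locally on $X$, neighborhoods of the form $W/\!/G_x$ with $W$ smooth affine and $G_x$ reductive, and then apply Theorem~\ref{introtm:reductive-quot} to each such chart. (The paper is slightly more careful at this step: it takes $x$ to be the unique closed point of $\mathcal{X}$ over $p$ and uses \cite[Prop.~9.1, Prop.~12.14]{Alp13} for linear reductivity of $G_x$, and it needs Luna's fundamental lemma for stacks \cite[Prop.~4.13]{AHR20} to conclude, after shrinking, that the induced map of good moduli spaces $(W/\!/G_x,[w])\to (X,p)$ is \'etale -- an \'etale chart $[W/G_x]\to\mathcal{X}$ of the stack does not by itself give an \'etale map of the good moduli spaces. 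This is a gloss in your write-up, not a fatal one.)

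The genuine gap is the step you yourself flag as the ``principal obstacle'': you do not prove that \'etale-local klt type implies klt type, you only list candidate strategies. In the paper this is exactly Proposition~\ref{prop:klt-type-etale}, proven in Section~\ref{sec:etale}, and the proof of the theorem simply cites it. Your sketches point in the right direction but are weaker than what is needed. The Zariski-local gluing is indeed handled by choosing a general member $D\in|-mK_X+mH|$ with $H$ ample and invoking \cite[Cor.~2.33]{KM98}. For the \'etale part, however, one cannot simply ``average the locally chosen boundaries by the Galois group'': the boundaries on different charts are unrelated on overlaps, and passing to a finite cover (via Zariski's Main Theorem and the Galois closure $Y\to X$) introduces ramification. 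The paper's argument pulls back the local klt pairs to sub-klt \emph{sub-pairs} whose boundaries contain the negative part $-R_h$, corrects them using the $G$-invariant ramification divisor $R_f\geq g^*R_h$, globalizes with a general member of an ample linear system on $Y$, averages over $G$, and only then descends via Riemann--Hurwitz as in Proposition~\ref{prop:finite-quotient}. So your plan is the correct one, but as written the proof is incomplete precisely at the descent lemma, and closing it requires this ramification bookkeeping rather than a direct averaging of the local boundaries.
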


\subsection{Good quotients of Mori Dream Spaces}\label{subsect:MDSs}

We call a normal variety $X$ with finitely generated divisor class group $\Cl(X)$ and finitely generated Cox ring
\[
R_X:= 
\bigoplus_{[D] \in \Cl(X)} 
\Gamma(X,\mathcal{O}_X(D)),
\]
a \emph{Mori Dream Space (MDS)}. As the name suggests, a projective variety with these properties behaves optimally with respect to the MMP;  e.g., see~the seminal article \cite{HK00} of Hu and Keel as well as the comprehensive work~\cite{ADHL15}. By now, the Cox ring and related notions have been generalized in multiple directions such as to local rings~\cite{BM21} as well as to algebraic stacks and to even more general objects~\cite{HMT20}. In the present work, we make (heavy) use of such generalized Cox rings, c.f.~Sections~\ref{subs:coxrings} and~\ref{sec:iteration}.

If $X$ is a projective variety \emph{of Fano type}, i.e., if there exists a divisor $B$ on $X$ such that the pair $(X, B)$ is klt and $-(K_X + B)$ is ample, it was shown in~\cite{BCHM10} that $X$ is a Mori Dream Space. Conversely, since a Mori Dream Space $X$ can be realized as a GIT-quotient of $\overline{X}:=\Spec R_X$ by a diagonalizable group, we see that if $\overline{X}$ is of klt type, then so is $X$. In fact, generalizing a basic result for cones \cite[Lem.~3.1(3)]{Kol13}, it has been shown in~\cite{Bro13, GOST15, KO15} that imposing the klt condition on the Cox ring is much stronger than doing so on $X$: a projective Mori Dream Space $X$ has Cox ring of klt type if and only if it is of Fano type. 
In particular, the proof in~\cite{GOST15} relates to earlier works of Smith~\cite{Sm00} and Schwede-Smith~\cite{SS10}, which use characteristic-$p$-methods to investigate varieties of Fano type and singularities of section rings. In~\cite[Sec.~7]{Sm00}, Smith investigates reductive quotients of varieties of Fano type and $F$-regular type. In particular, assuming ~\cite[Conj.~7.5]{Sm00}, Smith proves in~\cite[Thm.~7.6]{Sm00}, that GIT-quotients of varieties of Fano type are of globally $F$-regular type (and thus, by~\cite[Cor.~5.4]{GOST15} combined with~\cite[Thm.~1.1]{Bae11}, of Fano type). To be precise, Smith's conjecture asks whether affine algebras of characteristic 0 all of whose ideals are tightly closed are in fact of $F$-regular type; this relates also to more general questions that can be found in Section 8 of Hochster's Appendix 1 to~\cite{Hu96} and in~\cite[(2.5.7) Remark, (4.3.9) Remark]{HoHu99}.

The methods developed in the present paper yield the following statement, by which we may circumvent~\cite[Conj.~7.5]{Sm00} and deduce the ``Fano type" statement unconditionally as a corollary. 

\begin{introthm}
\label{introthm:MDSquotients}
Let $X$ be a Mori Dream Space. 
Let $G$ be a reductive group acting algebraically on $X$, and let $U \subseteq X$ be an open $G$-invariant subset admitting a good quotient $\pi\colon U \to U /\!/ G$.
Then, the quotient $U /\!/ G$ is a Mori Dream Space. Moreover, if $\Spec R_X$ is of klt type, then $\Spec R_{U/\!/G}$ is of klt type.
\end{introthm}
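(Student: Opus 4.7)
The plan is to realize $\Spec R_{U/\!/G}$ itself as a good quotient of an open subset of $\overline{X}:=\Spec R_X$ by a reductive group, and then invoke Theorem~\ref{introthm:proj-quot}.

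\textbf{Step 1: Lifting the action.} Since $X$ is a Mori Dream Space, the characteristic quasi-torus $H_X = \Spec \kk[\Cl(X)]$ acts on $\overline{X}$ with $X$ as good quotient on a big open. I would lift the $G$-action on $X$ to an action of a reductive extension $\tilde{G}$ of (a finite cover of) $G$ by $H_X$ on $\overline{X}$; this lifting is supplied by the iteration-of-Cox-rings machinery developed in Sections~\ref{subs:coxrings} and~\ref{sec:iteration}. Setting $\tilde{U}:=\pi_X^{-1}(U)$, where $\pi_X\colon \overline{X}\dashrightarrow X$ denotes the characteristic map, one obtains a $\tilde{G}$-invariant open subvariety of $\overline{X}$ on which $\tilde{G}$ admits a good quotient, recovering $U/\!/G$ itself.

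\textbf{Step 2: Identifying the characteristic space of $U/\!/G$.} Next, I would factor the $\tilde{G}$-quotient $\tilde{U}\to U/\!/G$ as a two-step quotient: first by a suitable normal subgroup $K\subseteq\tilde{G}$, producing a candidate total coordinate space $\overline{U/\!/G}:=\tilde{U}/\!/K$, followed by a residual quasi-torus quotient. The group $K$ is chosen so that the residual quasi-torus is the characteristic quasi-torus $H_{U/\!/G}$ of $U/\!/G$; equivalently, $K$ is the kernel of the natural character map $\tilde{G}\to H_{U/\!/G}$. As an extension of a reductive group by a quasi-torus, $K$ is again reductive. Using the iteration framework, one identifies $\overline{U/\!/G}$, up to an open inclusion, with $\Spec R_{U/\!/G}$; this requires bookkeeping of the characters of $G$ and the ramification divisors of $\pi_X$ as contributions to $\Cl(U/\!/G)$.

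\textbf{Step 3: Finite generation and klt type.} Finite generation of $R_{U/\!/G}$ follows from Hilbert's theorem applied to the reductive group $K$ acting on the finitely generated ring $R_X$, and finite generation of $\Cl(U/\!/G)$ follows from its description as a subquotient of $\Cl(X)$ together with the character lattice of $\tilde{G}$. This establishes the Mori Dream Space assertion. For the klt-type statement, the hypothesis implies $\overline{X}$ is of klt type, hence so is its open subset $\tilde{U}$. Theorem~\ref{introthm:proj-quot}, applied to the good quotient of $\tilde{U}$ by the reductive group $K$, then yields that $\tilde{U}/\!/K$, and therefore $\Spec R_{U/\!/G}$, is of klt type.

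\textbf{Main obstacle.} The delicate step is the precise identification in Step 2. The subgroup $K$ must be defined so that $\kk[\tilde{U}]^K$, together with its residual $H_{U/\!/G}$-grading, agrees with the full Cox ring of $U/\!/G$, including the contributions from the ramification of $\pi\colon U\to U/\!/G$. This is exactly what the iteration-of-Cox-rings formalism of Section~\ref{sec:iteration} is designed to handle, and I expect the bulk of the technical work to consist in verifying its hypotheses in the present setting.
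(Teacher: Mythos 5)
Your proposal is a genuinely different route from the one taken in the paper, and while the overall spirit — reduce to the main klt-quotient theorem by realizing $\Spec R_{U/\!/G}$ as a reductive quotient of something already known to be klt — is shared, the technical paths diverge substantially.

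\textbf{What the paper does.} The paper never lifts the $G$-action to the Cox space $\overline{X}=\Spec R_X$. After a preliminary step (which you omit) showing that klt-ness of a Cox ring is independent of the choices involved in Construction~\ref{constr:CoxSheaf}, including the presence of torsion and of nonconstant global invertible functions, the authors choose a group $K$ of Weil divisors on $U/\!/G$ mapping isomorphically to the \emph{free part} of $\Cl(U/\!/G)$, pull it back along $\pi$ to a group $K'$ of $G$-invariant divisors on $U$, and invoke B\"aker's isomorphism $(R_{U,K'})^G \cong R_{U/\!/G,K}$. This exhibits $\Spec R_{U/\!/G,K}$ directly as a $G$-quotient of $\Spec R_{U,K'}$, so it suffices by Theorem~\ref{introtm:reductive-quot} to show $\Spec R_{U,K'}$ is klt. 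That, in turn, is done by splitting $K'$ (up to a quasi-\'etale cover) into its principal part and a piece $K^1$ embedded in a group surjecting onto $\Cl(U)$; $\Spec R_{U,K^1}$ is a \emph{torus} quotient of $\Spec R_{U,K^2}$, which is a localization of $\overline{X}$. So Theorem~\ref{introtm:reductive-quot} is applied twice, once for $G$ and once for a torus, and the Cox space of $X$ enters only through localization, not through a lifted group action.

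\textbf{Where your proposal is riskier.} Step 2 is the crux, and as you state it, it is genuinely underdetermined. You need $\Spec R_{U/\!/G}$ to arise, up to open inclusion, as $\tilde U/\!/K$ for a suitable reductive $K\le\tilde G$. For that to hold, $\Cl(U/\!/G)$ must be recoverable from the $\Cl(X)$-grading on $\overline X$ together with the characters of $\tilde G$; but $\Cl(U/\!/G)$ contains contributions coming from the branch divisors of $\pi\colon U\to U/\!/G$ and possibly torsion classes that need not be visible in this data. The lifting machinery of Section~\ref{sec:iteration} is formulated for \emph{connected} reductive groups and for iteration of aff-local Cox rings relative to a fixed point, so it does not directly supply the extension $\tilde G$ by $H_X$ acting on all of $\overline X$ when $G$ has several components. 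In short, your Step 2 identification is not a straightforward bookkeeping check; it is the main content, and it is exactly what the paper's appeal to B\"aker's results replaces with an already-established isomorphism $(R_{U,K'})^G\cong R_{U/\!/G,K}$. If you want to pursue your route, you would need to prove the required identification of Cox sheaves, and you would also need the preliminary independence statement (klt-ness of one Cox ring implies klt-ness of all), which the paper needs and proves first.
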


In case $X$ and $U/\!/G$ possess only constant invertible global functions, the Mori Dream Space property of $U/\!/G$ was already known due to~\cite{Bae11}. However, as was shown in~\cite{HMT20}, this assumption is only needed to guarantee \emph{uniqueness} of Cox rings, c.f.~Section~\ref{subs:coxrings}. Using the characterization~\cite[Cor.~5.3]{GOST15}, we then get the desired unconditional statement on reductive quotients of varieties of Fano type.

\begin{introcor}
\label{introcor:Fanoquotients}
Let $X$ be a projective variety of Fano type, i.e., assume there exists a boundary $B$ on $X$ such that $(X,B)$ is klt and $-(K_X+B)$ is ample. Let $L$ be an ample line bundle over $X$. Let a reductive group $G$ act on $X$, and assume that the action has been linearized in the line bundle $L$. Then, the GIT-quotient $X^{ss}_L/\!/G$ is of Fano type.
\end{introcor}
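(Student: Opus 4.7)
The plan is to combine Theorem~\ref{introthm:MDSquotients} with the biconditional characterization recalled just above the statement: a projective Mori Dream Space $Y$ is of Fano type if and only if $\Spec R_Y$ is of klt type, see~\cite[Cor.~5.3]{GOST15}. The whole argument is a short chain of implications, with all of the substance already packaged into Theorem~\ref{introthm:MDSquotients}.

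First I would assemble the input data. Since $X$ is projective and of Fano type, \cite{BCHM10} ensures that $X$ is a Mori Dream Space, and the ``only if'' direction of the Fano type / klt Cox ring correspondence gives that $\Spec R_X$ is of klt type. Thus both hypotheses of Theorem~\ref{introthm:MDSquotients} are available for $X$.

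Next, I would set $U := X^{ss}_L \subseteq X$, which is an open $G$-invariant subset by definition of semistability. The GIT construction, applied to the ample $G$-linearized line bundle $L$, produces a good quotient $\pi\colon X^{ss}_L \to X^{ss}_L /\!/ G$. Applying Theorem~\ref{introthm:MDSquotients} directly to this setup yields that $X^{ss}_L /\!/ G$ is a Mori Dream Space and that $\Spec R_{X^{ss}_L/\!/G}$ is of klt type.

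Finally, I would close the loop via projectivity. Because $L$ is ample and the action is linearized in $L$, the quotient $X^{ss}_L /\!/ G$ is projective by the classical construction of GIT quotients. The ``if'' direction of \cite[Cor.~5.3]{GOST15} applied to this projective Mori Dream Space then converts the klt type property of its Cox ring back into the Fano type property of $X^{ss}_L/\!/G$, concluding the argument. The only potential obstacle would be to verify that the technical hypotheses of \cite[Cor.~5.3]{GOST15} (projectivity and the Mori Dream Space property) hold on both sides of the chain — but both points are immediate here — so the essential work has already been done in Theorem~\ref{introthm:MDSquotients}.
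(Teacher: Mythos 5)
Your argument is correct and follows the same route as the paper's own proof: assemble the hypotheses of Theorem~\ref{introthm:MDSquotients} from the Fano type assumption via~\cite{BCHM10} and~\cite[Cor.~5.3]{GOST15}, apply that theorem to $U = X^{ss}_L$, note projectivity of the GIT quotient, and close with the converse direction of~\cite[Cor.~5.3]{GOST15}. The paper's version is slightly more terse but structurally identical.
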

We remark that~\cite[Cor.~5.3]{GOST15} was reproved using the MMP in characteristic $0$ in~\cite[Thm.~4.1]{KO15}. Thus, the above statement does \emph{not} rely on characteristic-$p$-methods at all. 

The vanishing results for higher cohomology groups of nef line bundles on GIT-quotients of \textcolor{black}{smooth Fano varieties} discussed in~\cite{Sm00} and \cite{Scho05} then follow from the Kawamata-Viehweg vanishing theorem. Corollary~\ref{introcor:Fanoquotients} applies to many classically studied varieties such as moduli spaces of $n$ ordered (weighted) points on a line up to projective equivalence, \cite{HMSV12}. We emphasize that our result applies to GIT-quotients only; for example, normalized Chow quotients of Fano varieties do not need to be of Fano type, see Remark~\ref{rem:ChowQuotients} below. 

An interesting class of varieties obtained via GIT from invariant open subsets of $G$-representations (and hence of projective spaces) are moduli spaces of quiver representations, e.g., see \cite{Kin94} or \cite[Sect.~10.1]{Kir16}. Extending work done by Franzen, Reineke, and Sabatini in \cite{FRS20},  Corollary~\ref{introcor:Fanoquotients} immediately gives the following.

\begin{introcor}\label{introcor:quivers}
Let $\mathcal{Q}$ be a connected finite quiver having $k$ vertices and no oriented cycles. Fix a dimension vector $\mathbf{n} = (n_1, \dots, n_{k})$ as well as a semistability condition $\theta$, i.e., a character of $\mathrm{GL}_{n_1} \times \cdots \times \mathrm{GL}_{n_k}$ descending to the projectivization. Then, the  moduli space $M^{ss}_{\mathcal{Q}}(\mathbf{n}, \theta)$ of $\theta$-semistable representations of $\mathcal{Q}$ over $\mathbb{K}$ with dimension vector $\mathbf{n}$ is a projective variety of Fano type.
In particular, $M^{ss}_{\mathcal{Q}}(\mathbf{n},\theta)$ is a Mori Dream Space.
\end{introcor}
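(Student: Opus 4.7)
The plan is to combine Theorem~\ref{introthm:MDSquotients} with the Cox-ring characterization of Fano type varieties (\cite[Cor.~5.3]{GOST15}, \cite[Thm.~4.1]{KO15}), so as to upgrade the Mori Dream Space conclusion of Franzen--Reineke--Sabatini \cite{FRS20} to the Fano type conclusion.

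First, I would recall King's construction. Set $V := R(\mathcal{Q}, \mathbf{n}) = \bigoplus_{a: i\to j}\Hom(\mathbb{K}^{n_i}, \mathbb{K}^{n_j})$ and $G := \mathrm{GL}_{n_1}\times\cdots\times\mathrm{GL}_{n_k}$. The character $\theta$ linearizes the trivial line bundle over $V$ and cuts out a $G$-invariant open semistable locus $V^{ss}_\theta \subseteq V$ admitting a good quotient $V^{ss}_\theta \to M^{ss}_{\mathcal{Q}}(\mathbf{n}, \theta)$. Because $\mathcal{Q}$ has no oriented cycles, a grading of the vertices produces a one-parameter subgroup of $G$ that contracts every representation to the zero representation, so $\mathbb{K}[V]^G = \mathbb{K}$ and hence $M^{ss}_{\mathcal{Q}}(\mathbf{n}, \theta) = \operatorname{Proj}\bigoplus_{n \geq 0}\mathbb{K}[V]^{G,\theta^n}$ is projective over $\Spec\mathbb{K}$.

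Next, I observe that $V \cong \mathbb{A}^N$ is trivially a Mori Dream Space: its divisor class group vanishes, so its Cox ring coincides with its coordinate ring and $\Spec R_V = V$ is smooth, in particular of klt type. Applying Theorem~\ref{introthm:MDSquotients} to $X := V$ and the invariant open subset $U := V^{ss}_\theta$ then gives that $M^{ss}_{\mathcal{Q}}(\mathbf{n}, \theta) = U/\!/G$ is a Mori Dream Space with $\Spec R_{M^{ss}_{\mathcal{Q}}(\mathbf{n}, \theta)}$ of klt type. Combined with the projectivity from the first step and the Cox-ring characterization of Fano type cited above, this yields that $M^{ss}_{\mathcal{Q}}(\mathbf{n}, \theta)$ is of Fano type. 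As an alternative route one could instead realize $M^{ss}_{\mathcal{Q}}(\mathbf{n}, \theta)$ as a projective GIT-quotient of $\mathbb{P}(V \oplus \mathbb{K})$ (which is itself a projective space, hence of Fano type) with a suitable lift of the $G$-action and a linearization of $\mathcal{O}(1)$ by $\theta$, and then apply Corollary~\ref{introcor:Fanoquotients} directly; the two routes yield the same conclusion.

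The main obstacle is matching the hypotheses of Theorem~\ref{introthm:MDSquotients}: one must confirm that $V^{ss}_\theta$ is a $G$-invariant open subset of $V$ and that the natural map $V^{ss}_\theta \to M^{ss}_{\mathcal{Q}}(\mathbf{n}, \theta)$ is a good quotient in the sense used in the paper, both of which are built into King's original construction. Once these are noted, the Fano type statement (and in particular the Mori Dream Space property) follow formally from the machinery developed in the previous sections.
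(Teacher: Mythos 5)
Your argument is correct, and its main route differs slightly from the paper's. The paper also establishes projectivity exactly as you do (no oriented cycles, hence no nonconstant invariants, hence the quotient is projective over $R/\!/PG=\{\mathrm{pt}\}$), but then passes to the compactification $\mathbb{P}(R\oplus\mathbb{K})$ and invokes \cite[Thm.~3.3]{Hau04} to realize $R^{ss}_\theta$ as the semistable locus $\mathbb{P}^{ss}_H$ for a linearization in a power of $\mathcal{O}(1)$, so that Corollary~\ref{introcor:Fanoquotients} applies verbatim. You instead apply Theorem~\ref{introthm:MDSquotients} directly to the affine representation space $V$ (which is an MDS with smooth, hence klt type, Cox-ring spectrum in the sense of this paper, where projectivity is not built into the MDS definition) and the invariant open set $U=V^{ss}_\theta$ with its GIT (hence good) quotient; combining the resulting ``projective MDS with klt Cox ring'' with \cite[Cor.~5.3]{GOST15} (or \cite[Thm.~4.1]{KO15}) is exactly the final step of the paper's proof of Corollary~\ref{introcor:Fanoquotients}, which you have in effect inlined. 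What your route buys is that the compactification and the appeal to Hausen's theorem become unnecessary; what the paper's route buys is that the quiver statement becomes a formal consequence of the stated Corollary~\ref{introcor:Fanoquotients}. One caution about your ``alternative route'': as phrased, asserting that a linearization of $\mathcal{O}(1)$ twisted by $\theta$ has semistable locus equal to $R^{ss}_\theta$ is precisely the point the paper does not verify by hand but delegates to \cite[Thm.~3.3]{Hau04}, so if you take that path you should either cite that result or check the equality of semistable loci explicitly.
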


Similarly, the Fano type statement holds for all projective varieties ``in class $Q_G$" for some reductive group $G$, as introduced by Nemirovski in \cite{Nem13} and studied further in \cite[Sect.~4]{Gre15}. Among these are classical examples such as projectivizations of generic determinantal varieties. 

\subsection{Collapsing of homogeneous bundles}\label{subsect:Collapsing}
Many singular varieties studied in classical algebraic geometry or in relation with representation theory can be realized as  collapsings of  homogeneous vector bundles over  flag manifolds.\footnote{The authors sincerely thank Michel Brion for pointing them towards this application of the main results of this paper.}  This construction was introduced by Kempf in~\cite{Ke76} and has been investigated extensively mainly in characteristic $0$, e.g.~see \cite{Wey03, Man20}. Recently, Kempf's statements were generalized to arbitrary characteristic by L\H{o}rincz in~\cite{Lo20}.

The setting is the following. For a connected reductive group $G$ and a parabolic subgroup $P$, we consider a $G$-representation $W$ and a $P$-stable submodule $V \subset W$. The image of the homogeneous vector bundle $G \times_{P} V$ under the ``collapsing map", $[g, v] \mapsto g\cdot v \in W$, is closed, equal to $G \cdot V$, and therefore called the \emph{saturation} of $V$. More generally, instead of the submodule $V$ itself one may consider $P$-stable subvarieties $X \subset V$ and their saturation $G \cdot X \subset W$.

In characteristic $0$, Kempf showed that if the action of the unipotent radical $U_P$ of $P$ on $V$ is trivial (i.e., if $P$ acts \emph{completely reducibly} on $V$, \cite[Lem.~1]{Ke76}), then the image $G\cdot V$ of a birational collapsing map has rational singularities, see \cite[Thm.~3]{Ke76}. L\H{o}rincz on the other hand shows that $G \cdot V$ is even of strongly $F$-regular type. 
We note that in order to deduce from this statement the property of being of klt type, one would 
require an appropriate boundary $\Delta$ with $K_{G \cdot V} + \Delta$ being $\mathbb{Q}$-Cartier. Instead, given our main results, we are able to modify the proof of~\cite[Thm.~3.10]{Lo20} in such a way that we obtain the klt type property directly:

\begin{introthm}
\label{introthm:Collapsing}
Let $G$ be a connected reductive group
with a parabolic subgroup $P < G$.
Let $W$ be a $G$-representation
and $V\subset W$ a $P$-stable submodule such that $P$ acts completely reducibly on $V$.
Let $X\subset V$ be a $P$-stable subvariety.
If $X$ is of klt type, then the $G$-saturation $G\cdot X$ is of klt type. 
\end{introthm}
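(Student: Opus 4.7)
The plan is to realize the saturation $G \cdot X$ (or its normalization) as an affine invariant-theoretic quotient of a variety of klt type by a reductive group action, and then to invoke Theorem~\ref{introtm:reductive-quot}.

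To begin, I would pass to the Levi decomposition $P = L \ltimes U_P$, where $L$ is reductive and $U_P$ is the unipotent radical. The hypothesis that the $P$-action on $V$ is completely reducible is equivalent to $U_P$ acting trivially on $V$; in particular $U_P$ acts trivially on $X$, so the $P$-action on $X$ factors through $L$. This reduces the problem to a purely reductive quotient situation.

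Next, I would construct an affine variety of klt type whose coordinate ring equals $\kk[G\times X]^P$. Consider the affine Grosshans envelope $\overline{G/U_P} := \Spec \kk[G]^{U_P}$, whose finite generation is guaranteed by Grosshans' theorem since $U_P$ is the unipotent radical of a parabolic in the reductive group $G$. The variety $\overline{G/U_P}$ is a horospherical spherical $G$-variety, classically known to be of klt type, and hence the product $\overline{G/U_P} \times X$ is of klt type. The Levi $L$ acts on this product via its induced right action on $\overline{G/U_P}$ and its action on $X$. Applying Theorem~\ref{introtm:reductive-quot} to this reductive quotient, the affine GIT quotient
\[
Y \;:=\; \Spec\bigl(\kk[G]^{U_P} \otimes \kk[X]\bigr)^{L} \;=\; \Spec \kk[G\times X]^{P}
\]
is of klt type.

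Finally, I would identify $Y$ with the normalization of $G\cdot X$. The multiplication map $G \times X \to W$, $(g,x) \mapsto g \cdot x$, is $P$-invariant and has image $G \cdot X$, so it factors as $G \times X \twoheadrightarrow Y \to G \cdot X$ by the universal property of the quotient. Since the associated bundle $G \times^P X$ is irreducible and proper over $G\cdot X$ with generically connected fibers, a direct image computation combined with the Stein factorization yields
\[
\kk[Y] \;=\; H^{0}\bigl(G\times^{P} X,\, \mathcal{O}_{G\times^{P} X}\bigr) \;=\; \kk\bigl[(G\cdot X)^{\nu}\bigr],
\]
so $Y$ is identified with the normalization $(G\cdot X)^{\nu}$. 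This would give the desired conclusion that $G \cdot X$ is of klt type.

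The hardest technical point will be the last identification, namely the connectedness of the generic fibers of the collapsing $G \times^P X \to G\cdot X$; here complete reducibility of $P$ on $V$ is used essentially, as it ensures that the unipotent part contributes no extraneous identifications and the fiber structure is controlled solely by the reductive Levi $L$. A secondary technical input will be the classical fact that the horospherical envelope $\overline{G/U_P}$ is itself of klt type, which can be extracted from the theory of affine spherical varieties.
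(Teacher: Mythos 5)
Your overall strategy is the same as the paper's: pass to the Levi decomposition $P = L \ltimes U_P$, use that $U_P$ acts trivially to rewrite the $P$-invariants as $L$-invariants, realize the relevant coordinate ring as $(\kk[G]^{U_P} \otimes \kk[X])^L$ with $\Spec\kk[G]^{U_P}$ finitely generated and well-behaved, and then apply Theorem~\ref{introtm:reductive-quot}. Both proofs hinge on the same ring identification and the same Grosshans envelope.

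However, there is a genuine gap in your final step. You establish that $Y = \Spec\kk[G\times X]^P$ is of klt type and then identify $Y$ with the \emph{normalization} $(G\cdot X)^\nu$, but the theorem asserts that $G\cdot X$ itself is of klt type, which in particular requires $G\cdot X$ to be \emph{normal}. You correctly flag the connectedness of generic fibers of the collapsing $G\times^P X \to G\cdot X$ as ``the hardest technical point,'' but you neither prove it nor prove that $G\cdot X$ is normal; without both, your argument only yields the klt-type property for $(G\cdot X)^\nu$, which does not suffice. The paper closes this gap in one stroke by invoking Kempf's result \cite[Thm.~2 b)]{Ke76}, which gives $q_*\mathcal{O}_{G\times_P X} = \mathcal{O}_{G\cdot X}$ directly (and hence both normality of $G\cdot X$ and the identification $\kk[G\cdot X]\cong H^0(G\times_P X,\mathcal{O})$); this is precisely where the completely-reducible hypothesis on the $P$-action enters in a substantive way. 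A secondary, more minor point: your appeal to $\overline{G/U_P}$ being ``a horospherical spherical variety, classically known to be of klt type'' is correct in spirit but vague; the paper instead quotes a sharper statement (following Hashimoto and Grosshans) that $\Spec\kk[G]^{U_P}$ is factorial with rational singularities, hence canonical Gorenstein, after reducing (as in L\H{o}rincz) to $G$ semisimple and simply connected so that $\kk[G]$ itself is factorial.
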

The reader is referred to \cite[Sect.~4]{Lo20} for a number of situations in which this result may be applied, e.g.~to study rank varieties for quiver representations or Griffiths-type vanishing theorems on Schubert varieties.

\subsection{Techniques used and outline of the paper}
We finish this introduction by explaining some of the techniques involved in the proof of Theorem~\ref{introtm:reductive-quot}.
First of all, to treat the quotient by $G$ we will quotient first by the derived subgroup of its identity component.
Thus, we will reduce to the cases of semisimple groups, torus actions, and finite groups.
In Section~\ref{sec:torus+finite}, 
we will prove the main theorem for the case of torus actions having a klt fixed point, and in full generality for finite groups.
This part of the article will rely on the theory of proper polyhedral divisors~\cite{AH06} and Ambro's canonical bundle formula~\cite{Amb04, FG12}.
In Section~\ref{sec:etale}, we will prove that being a klt type variety is an \'etale property. 
Using this statement and Luna's \'etale slice theorem~\cite{Lun73}, we may reduce to the situation as appearing locally around a fixed point.
In Lemma~\ref{le:factorial-semisimple-quotient}, we will give a criterion
for the factoriality of
the quotient of a factorial variety by a semisimple group. This lemma will allow us to deal with the semisimple part of the quotient.
However, a klt type singularity may not be factorial.
In order to fix this, we lift the action of $G$ to the iteration of Cox rings of the singularity~\cite{BM21}, as explained in Section~\ref{sec:iteration}. The final step in the iteration will be a factorial variety. This allows us to finish the proof of the results stated in Sections~\ref{subsect:mainresults1} and \ref{subsect:mainresults2} above in Section~\ref{subsect:proof_of_main_results}. The proofs of the applications stated in Sections~\ref{subsect:symplectic}, \ref{subsect:goodmoduli}, \ref{subsect:MDSs}, and \ref{subsect:Collapsing} above are given in Section~\ref{sect:applicationsproof}. 

\textcolor{black}{\subsection{Further developments and open questions}
After the preprint \cite{PreprintVersion} of this paper appeared on the arXiv in early November 2021, Zhuang gave an alternative proof of (a slight generalisation of) Theorem~\ref{introtm:reductive-quot} in a preprint posted on the arXiv at the end of August 2022, see \cite[Thm~1.1~/~Cor~1.2]{Zhuang}. Zhuang's line of argumentation also uses the idea of reducing to the case of rational Gorenstein singularities by local finite generation properties in order to apply Boutot's result on the rationality of reductive quotient singularities. However, in contrast to our focus on geometric aspects of invariant theory and the ultimate reliance on topological finiteness properties of klt type singularities, Zhuang mostly uses ring theory and a local-finite-generation characterisation of klt type singularities. Another incarnation of local finite generation (in terms of ultrafilters) was used by Takagi and Yamaguchi to deal with plt pairs in a preprint that appeared in December 2023, see \cite{TakagiYamaguchi}.}

At this point, it is natural to extend the focus to log canonical singularities when considering the problem of reductive quotients, i.e., we ask the following:

\begin{introquest} \label{Q1}
Let $(X,x)$ be a log canonical type singularity.
Let $G$ be a reductive group acting on $(X,x)$.
Is the quotient $(X/\!/G; [x])$
of log canonical type?
\end{introquest} 

Similar to the klt case, 
we expect that lc type singularities are preserved under reductive quotients, since a lot of the structural results follow a pattern that is similar to the klt type case. Indeed, log canonical singularities are Du Bois~\cite{KK14} and Du Bois singularities are preserved under reductive quotients. Also note that, analogous to the rational Gorenstein case, normal Du Bois singularities with $K_X$ Cartier are log canonical (see e.g.~\cite{Kov00}). On the other hand, log canonical singularities are not \textcolor{black}{local} Mori dream spaces in the sense of~\cite{BM21}, i.e., local finite generation may fail, making e.g.~lifting to the iteration of Cox rings impossible in the lc type case. \textcolor{black}{Together with the non-availability of the canonical bundle formula in the lc case, also in the other approaches mentioned above this seems to be a major complication.} Hence, new ideas are needed to approach Question~\ref{Q1}.

\subsection*{Acknowledgements} 
The authors would like to thank Michel Brion, Ana-Maria Castravet, St\'ephane Druel, Stefano Filipazzi, Jack Hall, Jochen Heinloth, Jesus Martinez-Garcia, Stefan Nemirovski, David Rydh, Karl Schwede, Hendrik S\"u\ss, Xiaowei Wang, and Chenyang Xu for many useful comments. Moreover, they are grateful to the referee for various suggestions on how to improve the exposition of the paper. LB and DG want to thank the organizers of the ``Komplexe Analysis -- Algebraicity and Transcendence" workshop at Oberwolfach Research Institute for Mathematics (MFO) for inviting us and the MFO for its excellent hospitality; our discussions on the topic of this paper started there in August 2020. 

\section{Preliminaries} 

Throughout this article, we work over an algebraically closed field $\mathbb{K}$ of characteristic zero.
By a point we always mean a closed point.
All \emph{varieties} are assumed to be irreducible.
All the divisors considered are $\qq$-divisors.
A \emph{reductive group} is a linear algebraic group $G$
whose connected component $G^0$ containing the identity  
has no non-trivial connected unipotent normal subgroups.

In the present section, we 
make some observations about equivariant geometry and
recall some preliminary results 
about klt singularities, Cox rings, and varieties with torus actions.

\subsection{Equivariant geometry} 
After recalling the notion of a good quotient that is central to this work, we prove a basic result about the existence of certain invariant neighborhoods of fixed points.

\begin{definition}
{\em 
Let $X$ be a separated scheme of finite type acted upon by a reductive group $G$. A surjective $G$-invariant morphism $\pi: X \to Y$ to a separated scheme $Y$ is called a \emph{good quotient} if
\begin{enumerate}
    \item $\pi$ is an affine morphism, and
    \item
    the natural pull-back homomorphism
    $\pi^*\colon \mathcal{O}_Y \rightarrow (\pi_*\mathcal{O}_X)^G$ is an isomorphism.
\end{enumerate}
Here, sections of the sheaf $(\pi_*\mathcal{O}_X)^G$ over an open subset $U$ in $Y$ are the $G$-invariant regular functions on $\pi^{-1}(U)$. If a good quotient exists, it is unique and will be denoted by $X /\!/G$.}
\end{definition}

The most prominent examples of good quotients are obtained as follows: if $V$ is a finite dimensional representation of the reductive group $G$ over $\mathbb{K}$, then the ring $\mathbb{K}[V]^G$ of invariants under this group action is finitely generated by a famous result of Hilbert, and the induced map from $V$ to the spectrum of $\mathbb{K}[V]^G$ is a good quotient; e.g., see \cite{AGIV}. Similarly, if $X$ is an affine $G$-variety, then a good quotient $X/\!/G = \mathrm{Spec}(\mathbb{K}[X]^G)$ always exists as an affine variety. Quotients obtained from Mumford's Geometric Invariant Theory (GIT) are likewise good quotients, see \cite{MFK94}.
\begin{lemma}
\label{lem:shrinking-around-fixed-point}
Let $X$ be a normal affine variety and $G$ a reductive group acting on $X$. 
Let $x \in X$ be a $G$-fixed point. Then the following hold:
\begin{enumerate}
    \item If $\mathcal{O}_{X,x}$ is factorial, then there is a $G$-stable \emph{locally factorial} principal open affine neighborhood $x \in X_g \subseteq X$, i.e., $\mathcal{O}_{X,y}$ is factorial for any $y \in X_g$.
    \item When $D \in \CaDiv(X)$ is $G$-invariant, then there is a $G$-stable principal open affine neighborhood $x \in X_g \subseteq X$, such that $\left. D \right|_{X_{g}}$ is principal.
\end{enumerate}
Moreover, in any of the two cases, the image $\pi(X_g)$ under the invariant theoretic quotient morphism $\pi\colon X \to X/\!/G$ is an affine open principal neighborhood of $\pi(x)$. 
\end{lemma}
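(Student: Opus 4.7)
The plan is to reduce both parts of the lemma to a single separation principle: for any $G$-stable open neighborhood $V$ of the fixed point $x$, there exists $f \in \mathbb{K}[X]^G$ with $f(x) \neq 0$ and $X_f \subseteq V$. To establish this, set $Z := X \setminus V$, which is $G$-stable and closed. Since $x$ is $G$-fixed, the closed orbit $\{x\}$ is disjoint from $Z$, and by reductivity the quotient morphism $\pi \colon X \to X/\!/G$ separates disjoint closed $G$-stable subsets, so $\pi(x) \notin \pi(Z)$. Separating these points in the affine variety $X/\!/G$ yields $\bar{f} \in \mathbb{K}[X/\!/G]$ with $\bar{f}(\pi(x)) \neq 0$ and $\bar{f}|_{\pi(Z)} \equiv 0$; setting $f := \pi^*\bar{f} \in \mathbb{K}[X]^G$, the open $X_f$ is saturated with $\pi(X_f) = X_{\bar{f}}$, a principal affine open neighborhood of $\pi(x)$ in $X/\!/G$. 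This simultaneously settles the ``moreover'' statement of the lemma.

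For part (1), I would invoke that the factorial locus $V_{\mathrm{fact}} := \{y \in X : \mathcal{O}_{X,y} \text{ is factorial}\}$ is open in $X$ (a standard fact for excellent normal schemes, such as varieties over a field of characteristic zero) and is $G$-stable since the $G$-action on $X$ is by automorphisms, giving $\mathcal{O}_{X,y} \cong \mathcal{O}_{X,g\cdot y}$. By hypothesis $x \in V_{\mathrm{fact}}$, and applying the separation principle to $V_{\mathrm{fact}}$ yields the desired $X_g$.

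For part (2), the key is to produce a $G$-stable open neighborhood $V$ of $x$ on which $D|_V$ is globally principal; the separation principle then concludes the argument. I would exploit that $D$ is $G$-invariant as an actual divisor (not merely as a class): the $G$-action on $\mathbb{K}(X)$ preserves the subsheaf $\mathcal{O}_X(D) \subseteq \underline{\mathbb{K}(X)}$, so $M := \Gamma(X, \mathcal{O}_X(D))$ inherits the structure of a rational $G$-submodule of $\mathbb{K}(X)$. Since $D$ is Cartier and $x$ is fixed, the one-dimensional fiber $M_x = \mathcal{O}_X(D)_x \otimes \kappa(x) \cong M/\mathfrak{m}_x M$ is a $G$-representation of some character $\chi$, and the evaluation $M \twoheadrightarrow M_x$ is surjective and $G$-equivariant. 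Complete reducibility of rational $G$-modules in characteristic zero then forces the $\chi$-isotypic component of $M$ to surject onto $M_x$, producing a $G$-semi-invariant section $s \in M$ of weight $\chi$ that generates the stalk $\mathcal{O}_X(D)_x$. The open locus $V := \{y \in X : s \text{ generates } \mathcal{O}_X(D)_y\}$ then contains $x$ and is $G$-stable (since $g \cdot s = \chi(g) s$ differs from $s$ only by a nonzero scalar, preserving the generator locus), and on $V$ the section $s$ trivializes $\mathcal{O}_X(D)|_V$, so $D|_V$ is principal.

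The main subtle point that I expect will need care is the observation that the $G$-invariance of $D$ as an honest divisor (rather than merely as a divisor class) provides a \emph{canonical} $G$-linearization of $\mathcal{O}_X(D)$ without having to pass to a power; this is what permits the clean representation-theoretic argument above. Once this is in place, the rest is an application of standard tools from reductive group theory combined with the separation argument set up in the first paragraph.
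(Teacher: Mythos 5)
Your proof is correct and follows essentially the same approach as the paper's in all three parts: the open $G$-stable factorial locus for (1), a (semi-)invariant section trivializing $\mathcal{O}_X(D)$ near $x$ for (2), and in both cases separating $x$ from the complement by an invariant function to obtain a $\pi$-saturated principal open with principal affine image. One small remark on part (2): you allow a possibly nontrivial character $\chi$ on the fiber $L(x)$ and produce a $\chi$-semi-invariant section, whereas the paper asserts that the canonical linearization acts trivially on $L(x)$; this assertion can in fact fail when the support of $D$ passes through $x$ (for instance $X=\mathbb{A}^1$, $G=\mathbb{G}_m$ acting by scaling, $D=\{0\}$, where $\mathbb{G}_m$ acts on $L(0)$ via $\lambda\mapsto\lambda^{-1}$), so your more careful formulation is the right one, and a semi-invariant section still cuts out a $G$-stable trivializing open, which is all that is needed.
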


\begin{proof}
We prove the first item. Let $V \subseteq X$ be the locus where $X$ is locally factorial, i.e., $y \in X$ is in $V$ if and only if $\mathcal{O}_{X,y}$ is factorial. Since $\mathcal{O}_{X,x}$ is factorial by assumption, $V$ contains $x$, and by~\cite[Item~(1), p.~21]{BF84}, $V$ is open. Moreover, $V$ is $G$-stable, since $\mathcal{O}_{X,g.y}$ is factorial if and only if $\mathcal{O}_{X,y}$ is factorial. Thus $Y=X\setminus V$ is $G$-stable and closed in $X$. So we can find an invariant element $0 \neq f \in I(Y)$ in the ideal of $Y$ with $f(x) \neq 0$. In particular, $f$ is a regular function on $X/\!/G$ and thus $\pi(X_f)=(X/\!/G)_f=X_f/\!/G$. 

We come to the second item. Since $D$ is Cartier and $G$-invariant, we obtain a corresponding \emph{canonically $G$-linearized line bundle} $L=\mathcal{O}_X(D)$ on $X$, cf.~\cite[Prop.~1.7]{Hau04}. In particular, the action on sections $f \in \mathcal{O}_X(D) \subseteq \kk(X)$ is given by $g.f(y)=f(g.y)$ for all $y \in X$. By this definition, the fact that $x$ is fixed by the $G$-action on $X$ means that $G$ acts on the fiber $L(x)$ trivially. Since $X$ is affine, the pullback map $H^0(X,L) \to L(x)$ is surjective. Since the representation of $G$ on $H^0(X,L)$ is locally finite, \textcolor{black}{there exists a finite-dimensional $G$-stable subspace $W \subseteq H^0(X,L)$ mapping to $L(x)$. Since $G$ is reductive, $W$ is completely reducible, i.e., it is a direct sum of irreducible $G$-representations. Take one of these, say $W' \subseteq W$, which maps to $L(x)$ as well. By Schur's Lemma, $L':=W' \cong  L(x)$ as $G$-representations.} So, we found a (pointwise) $G$-invariant line $L' \subseteq H^0(X,L)$ such that the composition
  $$
  L' \hookrightarrow H^0(X,L) \to L(x)
  $$
  is surjective. Now let $0 \neq s \in L'$, then $s(x) \neq 0$. Let 
  \[
  U':=\{ y \in X\mid  s(y) \neq 0\}.
  \]
  Then $U'$ is open, affine, and $G$-stable in $X$, contains $x$ and $\left.L\right|_{U'}$ is trivial. Thus $\left.D\right|_{U'}$ is principal. Arguing as before, we can find an invariant $0 \neq g \in I(X \setminus U')$ with $g(x) \neq 0$. Then $x \in X_g \subseteq X$ and $D$ is principal on $X_g$. 
  As before, $\pi(X_g)=(X/\!/G)_g=X_g/\!/G$ is an affine open principal neighborhood of $[x]$.
\end{proof}

\subsection{Kawamata log terminal singularities}

In this subsection, we recall the notion and basic properties of Kawamata log terminal singularities.

\begin{definition}
{\em 
A {\em sub-pair}
is a couple $(X,B)$ 
where $X$ is a normal quasi-projective variety
and $B$ is a $\qq$-divisor
so that $K_X+B$ is a $\qq$-Cartier $\qq$-divisor.
A {\em pair} (or {\em log pair}) is a sub-pair
with $B$ effective.
}
\end{definition}

\textcolor{black}{
The above convention is common in the recent literature on the subject and it helps in keeping track of (non-)effectivity in longer arguments. Note however that the standard reference \cite{KM98} does not distinguish between pairs and sub-pairs. }

\begin{definition}
{\em 
Let $X$ be a normal quasi-projective variety.
A {\em prime divisor over $X$}
is a prime divisor which lies in a normal variety
admitting a projective birational morphism to $X$.
This means that we can find a projective birational morphism
$\pi\colon Y\rightarrow X$ so that
$E\subset Y$ is a prime divisor.
The {\em center} of $E$ on $X$,
denoted by $c_X(E)$, is the image of $E$ on $X$.

Let $(X,B)$ be a sub-pair, 
\textcolor{black}{$\pi\colon Y\rightarrow X$ a projective birational morphism, 
and pick $K_Y$ so that $\pi_*K_Y=K_X$.}
Let $E$ be a prime divisor over $X$.
The {\em log discrepancy} of $(X,B)$ at $E$
is defined to be
\[
a_E(X,B):=1+{\rm coeff}_E(K_Y-\pi^*(K_X+B)).
\]
}
\end{definition} 

\begin{definition}
{\em 
Let $(X,B)$ be a sub-pair.
A {\em log resolution} of $(X,B)$ is 
a projective birational morphism
$\pi\colon Y\rightarrow X$ satisfying the following conditions:
\begin{enumerate}
    \item $Y$ is a smooth variety,
    \item the exceptional locus
    of $\pi$ is purely divisorial, and
    \item the divisor
    $\pi^{-1}_*B+{\rm Ex}(\pi)_{\rm red}$ is a reduced
    divisor with simple normal crossing.
\end{enumerate}
}
\end{definition}
By Hironaka's resolution of singularities,
we know that any sub-pair admits a log resolution.

\begin{definition}
{\em 
Let $(X,B)$ be a sub-pair.
We say that $(X,B)$ is {\em sub Kawamata log terminal} (or {\em sub-klt} for short) if all its log discrepancies are positive; i.e., $a_E(X,B)>0$ for every prime divisor $E$ over $X$.
It is known that a log pair $(X,B)$ is sub-klt if and only if all 
the log discrepancies corresponding 
to prime divisors on some log resolution of $(X,B)$ are positive~\cite[Cor.~2.32]{KM98}.
If $(X,B)$ is sub-klt and additionally a log pair,
then we say that it is {\em Kawamata log terminal} 
(or {\em klt}).
If $X$ is a quasi-projective normal variety
and there exists a boundary $B$ for which $(X,B)$ is klt,
then we say that $X$ is of {\em klt type}. If $(X, \emptyset)$ is klt, we say that $X$ is \emph{klt}; in this case, often $X$ is said to have \emph{log terminal singularities}. 
If $a_E(X, \emptyset) \geq 1$ for every prime divisor $E$ over $X$, we say that $X$ has \emph{canonical} singularities.
}
\end{definition} 

\subsection{Preliminaries on Cox rings}
\label{subs:coxrings}

In this subsection, we recall the necessary preliminaries regarding Cox rings and related notions. Depending on the generality and the precise setting, we refer to~\cite{ADHL15, BM21, HMT20}. 
We recall from~\cite[Def 1.3.1.1]{ADHL15} the following basic definition.

\begin{definition}
{\em 
Let $X$ be a normal algebraic variety and $K \subseteq \WDiv(X)$ be a finitely generated subgroup. The \emph{sheaf of divisorial algebras} associated to $K$ is the quasi-coherent sheaf of $K$-graded $\mathcal{O}_X$-algebras 
$$
\mathcal{S}_K:= 
\bigoplus_{D \in K} 
\mathcal{O}_X(D).
$$
}
\end{definition}

When such a sheaf is \emph{locally of finite type}, that is, when every point $x \in X$ has an open affine neighborhood $x \in U \subseteq X$ with $\Gamma(U, \mathcal{S}_K)$ a finitely generated $\kk$-algebra, then the \emph{relative spectrum} $\widetilde{X}:=\Spec_X \, \mathcal{S}_K$  is a  normal variety by \cite[Prop 1.3.2.8]{ADHL15}.  Moreover, the morphism $\widetilde{X} \to X$ is a  good quotient for the action of the quasi-torus (or diagonalizable group) $H':= \Spec \, \kk[K]$ on $\widetilde{X}$ that is induced by the $K$-grading~\cite[Constr.~1.3.2.4]{ADHL15}. This quotient does not extract a divisor (see, e.g.,~\cite[Prop.~1.6.1.6]{ADHL15}).

Finite generation properties of such sheaves are of great interest. In particular, one is interested in the finite generation of the ring of global sections $S:=\Gamma(X,\mathcal{S}_K)$, which in general is much stronger than $\mathcal{S}_K$ being locally of finite type. The following characterization relates global finite generation with local finite generation.

\begin{proposition}
\label{prop:finite-type-local}
Let $X$ be a normal algebraic variety and $K \subseteq \WDiv(X)$ be a finitely generated subgroup. Let $\mathcal{S}_K$ be the sheaf of divisorial algebras associated to $K$. Then, the following statements are equivalent:
\begin{enumerate}
    \item The sheaf $\mathcal{S}_K$ is locally of finite type.
    \item For all $x \in X$, the stalk $\mathcal{S}_{K,x}$ is a finitely generated $\mathcal{O}_{X,x}$-algebra.
\end{enumerate}
If in addition $X$ is affine, then (1) and (2) are also equivalent to
\begin{enumerate}
    \item[(3)] The ring of global sections $S:=\Gamma(X,\mathcal{S}_K)$ is a finitely generated $\kk$-algebra.
\end{enumerate}
\end{proposition}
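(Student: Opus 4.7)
The implication $(1) \Rightarrow (2)$ is a routine localization: given an affine open $U \ni x$ with $\Gamma(U, \mathcal{S}_K)$ a finitely generated $\kk$-algebra (equivalently, finitely generated as an $R_U := \Gamma(U, \mathcal{O}_X)$-algebra, since $R_U$ itself is of finite type over $\kk$), passing to the stalk at $x$ preserves finite generation, yielding that $\mathcal{S}_{K,x}$ is a finitely generated $\mathcal{O}_{X,x}$-algebra. The substance of the proposition lies in the converse $(2) \Rightarrow (1)$.

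For $(2) \Rightarrow (1)$, fix $x \in X$ and pick homogeneous generators $f_1, \ldots, f_n$ of $\mathcal{S}_{K,x}$ of degrees $D_1, \ldots, D_n \in K$. After shrinking, the $f_i$ extend to sections on a common affine open neighborhood $V$ of $x$, producing the finitely generated graded $R_V$-subalgebra $A := R_V[f_1, \ldots, f_n] \subseteq \Gamma(V, \mathcal{S}_K)$, with $A_x = \mathcal{S}_{K,x}$. A key observation is that $D_1, \ldots, D_n$ must already generate $K$ as an abelian \emph{monoid}: for any $D \in K$, the rank-one reflexive sheaf $\mathcal{O}_X(D)$ has nonzero stalk at $x$, and expressing any $0 \neq s \in \mathcal{O}_X(D)_x$ as a polynomial in the $f_i$ and extracting the degree-$D$ homogeneous part forces a decomposition $D = \sum_{i=1}^n a_i D_i$ with $a_i \in \zz_{\geq 0}$.

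The remaining task is to produce a single $g \in R_V$ with $g(x) \neq 0$ such that $A_g = \Gamma(V_g, \mathcal{S}_K)$. For each fixed $D \in K$, the graded quotient $C_D := \Gamma(V, \mathcal{O}_X(D))/A_D$ is finitely generated over the Noetherian ring $R_V$ (as a submodule-quotient of $\Gamma(V, \mathcal{O}_X(D))$) with $C_{D,x} = 0$, so $\mathrm{Ann}_{R_V}(C_D)$ contains an element not in the maximal ideal of $x$. The main obstacle is to synthesize these pointwise vanishings into a single uniform $g$ across the infinite index set $K$. I plan to exploit the monoid generation together with the multiplicative structure of $\mathcal{S}_K$: once $g$ is arranged to handle a finite set of control degrees (the $D_i$'s and, if necessary, monomial witnesses $-D_i = \sum b_{ij} D_j$ with $b_{ij} \geq 0$), the algebra relations propagate the equality $A_g = \Gamma(V_g, \mathcal{S}_K)$ to every degree $D = \sum a_i D_i$, since any section in degree $D$ is at the stalk a polynomial in the $f_i$ whose coefficients clear denominators uniformly after localizing at $g$.

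In the affine case, $(3) \Rightarrow (1)$ is trivial (take the cover $\{X\}$), while $(1) \Rightarrow (3)$ follows by passing to a finite cover of the Noetherian affine $X$ by principal opens $X_{g_i}$ where $\Gamma(X_{g_i}, \mathcal{S}_K)$ is finitely generated, and invoking the standard descent statement that algebra-finite-type is local on the base. Since $R = \Gamma(X, \mathcal{O}_X)$ is itself finitely generated over $\kk$, the resulting finite generation of $S = \Gamma(X, \mathcal{S}_K)$ over $R$ upgrades to finite generation over $\kk$. I expect the uniform-shrinkage argument in $(2) \Rightarrow (1)$ to be the most delicate step.
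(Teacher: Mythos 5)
Your $(1)\Rightarrow(2)$, $(3)\Rightarrow(1)$, and $(1)\Rightarrow(3)$ are all correct. The paper argues $(1)\Rightarrow(3)$ slightly differently: under (1) it invokes that $\widetilde{X}=\Spec_X\mathcal{S}_K$ is a normal variety (\cite[Prop.~1.3.2.8]{ADHL15}) which is affine because $\widetilde{X}\to X$ is an affine morphism to the affine $X$, so its coordinate ring is finitely generated; your gluing via the local-on-the-base property of algebra finite type is an equally valid route.

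The substance lies in $(2)\Rightarrow(1)$, which the paper delegates entirely to \cite[Lem.~2.19]{BM21}, and here your sketch has a genuine gap at exactly the step you flag. Arranging $g$ so that $A_g$ and $\Gamma(V_g,\mathcal{S}_K)$ agree in the finitely many control degrees $D_1,\dots,D_n$ does not propagate to a general $D=\sum a_iD_i$: the multiplication maps $\bigotimes_i\mathcal{O}_X(D_i)^{\otimes a_i}\to\mathcal{O}_X(D)$ need not be surjective off the regular locus (already on a surface $A_1$ singularity one has $\mathcal{O}_X(D)\cdot\mathcal{O}_X(D)\subsetneq\mathcal{O}_X(2D)$ at the vertex when $D$ generates the local class group), so a degree-$D$ section written as a polynomial in the $f_i$ over $\mathcal{O}_{X,x}$ acquires denominators that depend on $D$, and nothing in your argument bounds them uniformly across the infinite index set. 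Put differently, the cokernel supports $Z_D=\Supp(C_D)$ are infinitely many closed subsets avoiding $x$, and finite generation of the graded subalgebra $A$ alone does not prevent their union from being dense in a component through $x$. Your observation that $D_1,\dots,D_n$ generate $K$ as a monoid is correct and useful --- it ensures every $\mathcal{O}_X(D)$ meets $A$ nontrivially --- but it only constrains which degrees occur in $A$, not the size of the degree-$D$ piece of $\Gamma(V,\mathcal{S}_K)$. Closing the gap requires the reflexivity of the sheaves $\mathcal{O}_X(D)$ (which at least forces each $Z_D$ into codimension $\ge 2$) together with the further finiteness analysis of \cite[Lem.~2.19]{BM21}; the plan as written does not get there.
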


\begin{proof}
The equivalence of (1) and (2) is~\cite[Lemma 2.19]{BM21}. The proof of~\cite[Prop.~4.3.1.1]{ADHL15} shows the implication (3) $\Rightarrow$ (1), \textcolor{black}{ which does not require $X$ to be affine, but holds in general.}

\textcolor{black}{Now let $X$ be affine. The implication (1) $\Rightarrow$ (3) follows since $\widetilde{X} \to X$ is an affine morphism, which means that $\widetilde{X}$ is affine and $S$ equals the ring of global sections $\Gamma(\widetilde{X},\mathcal{O}_{\widetilde{X}})$.}
\end{proof}

One usually speaks of a \emph{Cox sheaf} ---and \emph{Cox ring} for the ring of global sections--- when $K$ is mapped isomorphically to $\Cl(X)$. Finite generation of the Cox ring is of great interest in birational geometry, since a variety with this property allows only finitely many small birational modifications, controlled by the \emph{variation of GIT} on the Cox ring. In particular, any MMP on such a variety terminates, and consequently, it is called a \emph{Mori Dream Space}.  

If we want to construct Cox sheaves or Cox rings, \textcolor{black}{in the case the class group is a free finitely generated abelian group} we only have to choose a subgroup $K \subseteq \WDiv(X)$ such that the projection $\WDiv(X) \to \Cl(X)$ restricts to an isomorphism $K \xrightarrow{\cong} \Cl(X)$. Then~\cite[Constr.~1.4.1.1]{ADHL15} shows that the Cox sheaf
\[
\mathcal{R}_X:= 
\bigoplus_{[D] \in \Cl(X)} 
\mathcal{O}_X(D),
\]
where $D \in K$ is a representative of the class $[D]$, does not depend on the choice of $K$, up to isomorphism. 


On the other hand, if the class group has torsion, one needs an additional quotient construction, which may introduce non-uniqueness of the Cox ring in certain cases. However, as~\cite{HMT20} shows and as we will discuss subsequently, the Cox rings are still well-defined. The following construction proceeds along the lines of~\cite[Constr.~1.4.2.1]{ADHL15}.

\begin{construction}\label{constr:CoxSheaf}
{\em 
Let $X$ be a normal algebraic variety and $K \subseteq \WDiv(X)$ be a finitely generated subgroup.
Let $L \subseteq K$ be a subgroup
contained in the kernel of $K\rightarrow {\rm Cl}(X)$. \textcolor{black}{Due to~\cite[Constr.~1.4.2.3]{ADHL15}, there exists a character $\chi \colon L \to \kk(X)^*$ satisfying}
$$
\ddivv(\chi(D))=D
\qquad
\text{ for all } D \in L.
$$
Denote by $\mathcal{I}_{L,\chi}$ the sheaf of ideals of $\mathcal{S}_K$ locally generated by the sections $1-\chi(D)$ for $D \in L$, \textcolor{black}{where $1$ is homogeneous of degree $0$ and $\chi(D)$ is homogeneous of degree $-D$}. 

The \emph{Cox sheaf associated to $K, L$, and $\chi$} is the sheaf $\mathcal{R}_{X,K,L,\chi}:=\mathcal{S}_K/\mathcal{I}_{L,\chi}$, which has a grading by $M:=K/L$ defined by
$$
(\mathcal{R}_{X,K,L,\chi})_{m}:=\pi \left( \bigoplus_{D \in c^{-1}(m)} (\mathcal{S}_{K})_{D} \right),
$$
where $\pi \colon \mathcal{S}_{K} \to \mathcal{R}_{X,K,L,\chi}$ and $c \colon K \to M$ are the projections. \textcolor{black}{The \emph{Cox ring associated to $K, L$, and $\chi$} is its ring of global sections
$
R_{X,K,L,\chi}:=\Gamma(X,\mathcal{R}_{X,K,L,\chi}).
$}
}
\end{construction}

\begin{remark}
\label{rem:nonconstant-global-functions}
{\em 
The original construction \cite[Constr.~1.4.2.1]{ADHL15} is a special case of the above, where the restriction of the projection $\WDiv(X) \to \Cl(X)$  to $K$ is assumed to be surjective and $L$ is defined to be the kernel of this projection; i.e., $M$ is isomorphic to $\Cl(X)$. Moreover, $X$ is assumed to have only constant global invertible functions. In this case, $\mathcal{R}_{X,K,L,\chi}$ and $R_{X,K,L,\chi}$ are the usual Cox sheaf and Cox ring of $X$, respectively. 

In~\cite[Theorem B]{HMT20}, it was shown that when $X$ is allowed to have nonconstant global invertible functions, the set of isomorphism classes of Cox rings \textcolor{black}{ $R_{X,K,L,\chi}$ as above with $M \cong \Cl(X)$} is in bijection to $\mathrm{Ext}^1(\Cl(X), \Gamma(X, \mathcal{O}^*))$. So, for example in the case of local rings of singularities as considered in~\cite{BM21}, Cox rings are not unique in general.
}
\end{remark}

In the following, we will be interested in a specific choice for the subgroups of Weil divisors $K$ and $L$ adjusted to the local setting (cf.~\cite[Def.~3.15]{BM21}):

\textcolor{black}{
\begin{definition}
For a point $x$ of a normal affine variety $X$, we denote by $\PDiv(X,x)$ the group of all Weil divisors principal on some neighborhood of $x$, cf.~\cite[Sec. 1.6.2]{ADHL15}. We call $\Cl(X,x):=\WDiv(X)/\PDiv(X,x)$ the local class group of $x \in X$.
\end{definition} 
}

Note that when $X_x:=\Spec \mathcal{O}_{X,x}$ is the spectrum of the Zariski local ring of $x \in X$, then $\Cl(X,x)$ is equal to the class group of $X_x$.  The following result is crucial in order for us to be able to apply Construction~\ref{constr:CoxSheaf}.

\begin{proposition}
\textcolor{black}{Let $x \in X$ be a klt type singularity. Then the abelian group $\Cl(X,x)$ is finitely generated.} 
\end{proposition}
\begin{proof}
We denote by $\mathcal{O}_{X,x}^{h}$ the Henselization and by $\widehat{\mathcal{O}}_{X,x}$ the completion of the local ring $\mathcal{O}_{X,x}$. 
The ring $\mathcal{O}_{X,x}$ is rational due \textcolor{black}{to~\cite[Thm.~1-3-6]{KMM87}}. \textcolor{black}{In particular, it is $k$-rational for every $k>0$, that is, for every resolution $f: Y \to X$, the direct image sheaf $R^k f_{*} \mathcal{O}_Y$ vanishes around $x$.}

Due to $1$-rationality, by~\cite[Thm 6.1]{BF84}, the class group $\Cl(\mathcal{O}_{X,x}^{h})$ is finitely generated. 
Since $\mathcal{O}_{X,x}$ is $2$-rational, so is $\mathcal{O}_{X,x}^{h}$, and therefore the natural homomorphism \[\Cl(\mathcal{O}_{X,x}^{h}) \to \Cl(\widehat{\mathcal{O}}_{X,x})\] is bijective by~\cite[Thm 6.2]{BF84}. 

On the other hand, by~\cite[Thm.~(6.5)]{Sam64} (Mori's Theorem), the homomorphism from $\Cl(\mathcal{O}_{X,x})$ to $\Cl(\widehat{\mathcal{O}}_{X,x})$ realizes $\Cl(\mathcal{O}_{X,x})$ as a subgroup of a finitely generated abelian group, which concludes the proof.
\end{proof}

\textcolor{black}{In the following, we will discuss the crucial local finite-generation property for Cox rings of klt type singularities.}

\begin{definition} 
{\em 
\label{def:aff-loc-Cox-ring}
Let $X$ be an affine klt type variety and $x \in X$. 
Let $K \subseteq \WDiv(X)$ be finitely generated by divisors $D_1,\ldots,D_k$ going through $x$, such that the restriction $\varphi \colon K \to \Cl(X,x)$ of the quotient $\WDiv(X) \to \Cl(X,x)$ is surjective. Let $L:=\ker(\varphi)$ and fix a character $\chi \colon L \to \kk(X)^*$ with $\ddivv(\chi(D))=D$ for all $D  \in L$. We call
$$
R_{X,x}:=\Gamma(X,\mathcal{R}_{X,K,L,\chi})
$$
the \emph{affine local Cox ring of $X$ at $x$} (with respect to $K,L,$ and $\chi$, but we will omit this dependency, as it will play no role in the following) or \emph{aff-local Cox ring} for short. We call $X':=\Spec R_{X,x}$ the associated \emph{Cox space}.
}
\end{definition} 
\textcolor{black}{
In ~\cite{BM21}, two of the authors of this paper defined \emph{local Cox rings} of klt singularities. These Cox rings are of the form $R_{X_x}:=\Gamma(X_x,\mathcal{R}_{X_x,K,L,\chi})$, where $K/L \cong \Cl(X,x)$. These Cox rings are not local in general, but graded-local. That is,  they possess a unique graded maximal ideal~\cite[Sec 2.2]{BM21}. Graded-local rings can be obtained by a process similar to localization. The \emph{graded-localization} of a graded algebra  $A$ at a graded prime $\mathfrak{p}$ is defined to be $A_{(\mathfrak{p})}:=S^{-1}A$, where the multiplicative set $S$ consists of homogeneous elements of $A \setminus \mathfrak{p}$.
The ideal $\langle \mathfrak{p} \rangle$ generated by $\mathfrak{p}$ in $A_{(\mathfrak{p})}$ is the unique graded maximal one and localizing again with the multiplicative set $A_{(\mathfrak{p})} \setminus \langle \mathfrak{p} \rangle$ gives the usual localization, so we have inclusions
$$
A \hookrightarrow A_{(\mathfrak{p})} \hookrightarrow A_{\mathfrak{p}} = \left( A_{(\mathfrak{p})} \right)_{\langle \mathfrak{p} \rangle}.
$$
Going from $A_{(\mathfrak{p})}$ to $A_{\mathfrak{p}}$ preserves the divisor class group~\cite[Lem. 2.9]{BM21}.
As the name suggests, the aff-local Cox ring from Definition~\ref{def:aff-loc-Cox-ring} can be seen as an affine version of the local Cox ring, and the following proposition shows that indeed the graded-local version can be obtained from it by graded-localization.}

\begin{proposition}
\label{prop:aff-loc-Cox-ring}
Let $X$ be an affine klt type variety
and  $x\in X$.
Then, the following statements hold:
\begin{enumerate}
    \item $R_{X,x}$ is a finitely generated $\kk$-algebra. In particular,  $X'$ is an affine variety.
    \item $X'$ is of klt type. 
    \item The quasi-torus action on $X'$ is strongly stable in the sense of \cite[Def.~1.6.4.1]{ADHL15}. 
    \item The quotient $X' \to X$ has exactly one invariant point $x'$ in the preimage of $x \in X$. 
    \item The graded-localization of $X'$ at $x'$ equals the spectrum of the local Cox ring
    $$
    R_{X_x}:=\Gamma(X_x,\mathcal{R}_{X_x,K,L,\chi})
    $$
\end{enumerate}
\end{proposition}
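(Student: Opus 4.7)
The plan is to reduce each item to local statements at $x$, where the results on local Cox rings of klt singularities established in \cite{BM21} are directly applicable, and then to globalize using Proposition~\ref{prop:finite-type-local} together with the compatibility of the Cox sheaf construction with localization on the base. The crucial input is that every generator of $K$ passes through $x$, which ensures both that the quotient $X' \to X$ is \emph{concentrated} above $x$ in a precise sense and that the Cox sheaf trivializes sufficiently at other points.

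For item (1), Proposition~\ref{prop:finite-type-local} reduces finite generation of $R_{X,x}$ to local finite generation of $\mathcal{R}_{X,K,L,\chi}$ at every closed point of $X$. At $x$ itself, the stalk is, up to notational differences, the local Cox ring of $X_x$ studied in \cite{BM21}, whose finite generation for klt type singularities is the main theorem of that work. At a point $y \neq x$, one can shrink to a principal affine neighborhood $U$ on which some of the $D_i$ become principal, reducing by induction on the number of $D_i$'s effectively supported through $y$ to the previous case applied to a smaller generating set over $U$. The main obstacle is item (2): showing that $X'$ is globally of klt type. My plan is to start from a boundary $B$ making $(X,B)$ klt and to construct a ``Cox lift'' $B'$ on $X'$ by Riemann--Hurwitz for the quasi-torus quotient $\pi\colon X' \to X$, which is quasi-\'etale in codimension one with ramification supported on the reductions of the homogeneous lifts of the $D_i$. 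One then verifies that $(X',B')$ is klt locally at $x'$ via the local theorem of \cite{BM21} and at every other point of $X'$ using the reduction from the proof of (1) to a polynomial extension of a klt base.

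For the remaining structural items, (3) follows from the observation that, since $\varphi\colon K \to \Cl(X,x)$ is surjective with kernel $L$ consisting of divisors principal near $x$, the quasi-torus dual to $K/L$ acts with trivial generic stabilizer on the pullback to $X'$ of the locally factorial locus of $X$, which is big; combined with normality of $X'$ one extracts strong stability. Item (5) is a formal consequence of the commutation of graded localization at $x'$ with both the Cox construction at the level of sheaves and the operation of taking global sections: localizing $\mathcal{R}_{X,K,L,\chi}$ at the maximal ideal of $x$ and passing to global sections returns $\Gamma(X_x,\mathcal{R}_{X_x,K,L,\chi}) = R_{X_x}$ by definition. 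Item (4) then follows from \cite{BM21}, which asserts that the local Cox ring $R_{X_x}$ admits a unique graded maximal ideal; by (5) this ideal corresponds to a unique quasi-torus-invariant closed point $x' \in X'$ above $x$.
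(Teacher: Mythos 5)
Your approach to item (2) has a genuine gap. The quotient $\pi\colon X' \to X$ is the good quotient by the quasi-torus $H'=\Spec\kk[\Cl(X,x)]$, whose identity component is an algebraic torus of rank equal to $\mathrm{rk}\,\Cl(X,x)$. Unless $\Cl(X,x)$ happens to be finite, $\pi$ is therefore \emph{not} a finite morphism, let alone quasi-\'etale in codimension one, and there is no Riemann--Hurwitz formula to invoke. (The correct statement about $\pi$, quoted from \cite[Prop.~1.6.1.6]{ADHL15}, is that it does not contract a divisor; that is a weaker property and does not feed into a ramification argument.) The paper's actual proof of (2) imports \cite[Thm.~3.23]{BM21}, whose argument (see Remark~\ref{rem:klt-ness-Cox}) follows \cite{LS13}: one shows that the normalized Chow quotient of the torus action on $X'$ is of Fano type relatively over $X$, and then transfers klt-ness across the $\mathbb{T}$-variety structure using the canonical class formula for polyhedral divisors (Proposition~\ref{Proposition-CanonicalClassTVar}), not Riemann--Hurwitz. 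Your finite-cover intuition would only address the torsion part of $\Cl(X,x)$; the free (torus) part requires the cone-type / Fano-type-over-base mechanism that your proposal is missing.

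On item (1), you take a genuinely different and more roundabout route. The paper appeals directly to Remark~\ref{rem:fin-gen-aff-Cox}, i.e.~to \cite[Cor.~1.1.9]{BCHM10} applied with $X\to X$ the identity, to get finite generation of the Cox \emph{sheaf} over $\kk[X]$; finite generation of $R_{X,x}$ then falls out from the equivalence $(1)\Leftrightarrow(3)$ in Proposition~\ref{prop:finite-type-local}. Your plan to globalize from stalkwise finite generation — citing the local theorem of \cite{BM21} at $x$ and inducting on the number of $D_i$ passing through $y$ at other points $y$ — is plausible in spirit but sketchier: at $y\neq x$ the group $K/L$ is generally unrelated to $\Cl(X,y)$, so you cannot simply invoke the local Cox-ring theorem there, and the induction step would need to be worked out carefully. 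The BCHM route avoids this entirely.

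Your items (3)--(5) are close to the paper in content, though you reverse the order of (4) and (5): the paper establishes (4) directly via the strong stability from (3) and \cite[Prop.~1.6.2.5]{ADHL15}, and then deduces (5) by a base-change/localization computation, whereas you derive (4) from (5) plus the graded-localness of $R_{X_x}$. That alternative ordering is legitimate.
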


\begin{proof}
We begin with Item (1). Since $X$ is affine and of klt type, the Cox sheaf associated to $\Cl(X,x)$ is finitely generated over $\mathbb{K}[X]$ due to~\cite[Cor.~1.1.9]{BCHM10}, which says that the pushforward of a Cox sheaf along a morphism of Fano type is finitely generated. Thus, the claim follows from the equivalence of (1) and (3) of Proposition~\ref{prop:finite-type-local}.

\textcolor{black}{
The proof of Item (2) can be copied verbatim from the proof of~\cite[Thm.~3.26]{BM21}, with
$\phi\colon X\to Z$ the identity 
and $\Cl(X,\Delta)$ therein replaced by $\Cl(X,x)$.} 

The proof of Item (3) follows in the same way as it is indicated in~\cite[Rem.~1.6.4.2]{ADHL15}. Tracing back the line of arguments, the relevant items from~\cite[Rem.~1.6.1.6]{ADHL15} are (i) and (ii). These  follow from~\cite[Lem.~1.5.1.2, Constr.~1.5.1.4]{ADHL15}. These in turn rely on~\cite[Constr.~1.4.2.1]{ADHL15}, where the requirement of having only constant globally invertible functions is only needed to guarantee the uniqueness of the Cox ring. Thus we can replace~\cite[Constr.~1.4.2.1]{ADHL15} by our Construction~\ref{constr:CoxSheaf} and use the same reasoning to see that the quasi-torus $H:=\Spec \kk[\Cl(X,x)]$ acts strongly stably on $X'$  (cf. also Remark~\ref{rem:nonconstant-global-functions}).  

We come to Item (4). Since $R:=\kk[X]$ is the degree-zero part of $R_{X,x}$ with respect to the $\Cl(X,x)$-grading, the morphism $\varphi \colon X' \to X$ is a good quotient by $H$. In particular, the preimage $\varphi^{-1}(x)$ contains exactly one closed orbit. Let $x'$ be a point in this orbit. Then \cite[Prop. 1.6.2.5]{ADHL15} with $\Cl(X)$ therein replaced by $\Cl(X,x)$ shows that the isotropy group of $x'$ is the whole of $H$, i.e., $x'$ is $H$-invariant. Of course there can be no other $H$-invariant point in $\varphi^{-1}(x)$.

We turn to show Item (5). First, note that in Definition~\ref{def:aff-loc-Cox-ring}, we chose $K$ to be generated by Weil divisors $D_i$ on $X$ going through $x$. By abuse of notation, we can consider them (and the subgroup $L$) as a group of Weil divisors on $X_x$. The character $\chi \colon L \to \kk(X)^*=\kk(X_x)^*$ can also be transferred to the local setting, so the local Cox ring $R_{X_x}$ is well defined and can be obtained from $R_{X,x}$ by the base change $X_x \to X$, cf.~\cite[Pf. of Cor. 3.24]{BM21}. In other words, it is the stalk of the sheaf  $\mathcal{R}_{X,K,L,\chi}$ at $x$. 

Now, let $S$ be the set of non-units of $R \setminus \mathfrak{m}_{x}$ and let $S'$ be the set of homogeneous non-units of $R_{X,x}\, \setminus \mathfrak{m}_{x'}$. Here $\mathfrak{m}_{x}$ and $\mathfrak{m}_{x'}$ are the maximal ideals corresponding to $x$ and $x'$, respectively. Let $f \in S'$. Since $x'$ lies in the closure of every $H$-orbit mapping to $x$, the  (closed) image of the $H$-invariant principal divisor $Z:=V_{X'}(f)$ does not meet $x$. But since the spectrum of the local Cox ring $R_{X_x}$ is the fiber product $X' \times_{X} X_x$, we infer that $f$ must be a unit in $R_{X_x} \cong S^{-1}R \otimes_{R} R_{X,x}$. Since we have canonical monomorphisms
$$
S^{-1}R \otimes_{R} R_{X,x} \hookrightarrow (S')^{-1} R_{X,x} \hookrightarrow Q(R_{X,x}),
$$
this observation together with the universal property of localizations shows that the first map is in fact an isomorphism, as claimed. 
\end{proof}

\subsection{Preliminaries on torus actions}

This section introduces background on torus actions.
We will use standard toric notations. Namely, $N\simeq \zz^{n}$ is a lattice, $M =  \Hom_{\zz}(N, \zz)$
is its dual lattice, $M_{\qq}$ and $N_{\qq}$ are the associated $\qq$-vector spaces obtained 
by tensoring $M$ and $N$ by $\qq$ and $\TT:= \G_{m}\otimes_{\zz} N$ the associated algebraic torus.
We write $M_{\qq}\times N_{\qq}\rightarrow \qq$, $(m, v)\mapsto \langle m, v\rangle$ for the natural pairing.

First, we give an example for the phenomenon that quotients by torus actions may lead to non-$\mathbb{Q}$-Gorenstein singularities. 

\textcolor{black}{
\begin{example}
\label{ex:torus}
{\em 
Consider the action of $\G_{m}$ on four-dimensional affine space by
 $$
 t\cdot (x_1,\ldots,x_4):=(t^2\cdot x_1,t^{-1}\cdot x_2,t^{-1}\cdot x_3,t\cdot x_4).
 $$
 It is easy to see that the ring of invariants $\cc[x_1,\ldots,x_4]^{\G_{m}}$ with respect to this action is generated by the five invariants
 $
 x_1x_2^2$, $x_1x_2x_3$, $x_1x_3^2$, $x_2x_4$, and $x_3x_4$,
 which we denote by $f_1,\ldots,f_5$. 
 The ideal of relations is generated by 
 $
 f_1f_3-f_2^2$, $f_1f_5-f_2f_4$, and $f_2f_5-f_3f_4$.  The three-dimensional quotient $X:=\cc^4/\!/\G_{m}$ is the toric singularity given by these three relations or, equivalently, by the cone with primitive ray generators given by 
 $$
 u_1=(0,0,1), \quad u_2=(0,1,2), \quad u_3=(1,0,1), \quad u_4=(1,1,1)
 $$
 in $N=\zz^3$. In particular, the quotient is not a complete intersection. Since the primitive ray generators do not lie in a common affine hyperplane, no multiple of the canonical divisor $K_X=-(D_1+\ldots+D_4)$ can be written in the form \begin{equation}\label{localmarker}\sum \, \langle u_i,m\rangle D_i\end{equation} for some $m \in M$. Hence, the singularity is not $\qq$-Gorenstein by~\cite[Thm~4.2.8]{CLS11}. It is though of klt type. Indeed, choosing $D=\frac{1}{2}D_1$ as boundary, two times the log canonical divisor $K_X+D$ can be written in the above form \eqref{localmarker} with $m=(-1,0,-1)$; i.e., $K_X + D$ is $2$-Cartier. Inserting the ray through $(1,1,2)$ provides a log-resolution of $(X,D)$ such that the discrepancy of the sole exceptional divisor is $1/2$.
 }
\end{example}
}

\subsubsection{Polyhedral divisors and affine $\mathbb{T}$-varieties}
\textcolor{black}{In this subsection, we recall  the notion of polyhedral divisor and its connection to $\mathbb{T}$-varieties. Main references for the material summarized in this section are~\cite{AH06,AIPSV12}.
}
Given a polyhedron $\Delta\subseteq N_{\qq}$, we set 
\[
{\rm tail}(\Delta) = \{ v\in N_{\qq}\,|\, v + \Delta \subset \Delta\}
\]
and call it the \emph{tail-cone} of $\Delta$. For any strictly convex polyhedral cone $\sigma \subset N_{\qq}$ we consider the semigroup
$$\pol_{\sigma}(N_{\qq}) =  \{ \Delta \subset N_{\qq}\,|\, \Delta \text{ polyhedron with ${\rm tail}(\Delta)$}=  \sigma\}$$
with addition the Minkowski sum. Note that $\sigma$ is the neutral element of $\pol_{\sigma}(N_{\qq})$.
We also define the extended semigroup $\pol_{\sigma}^{+}(N_{\qq}) := \pol_{\sigma}(N_{\qq}) \cup \{\emptyset\}$,
where $\emptyset$ is an absorbing element, \textcolor{black}{i.e., $\Delta+\emptyset=\emptyset$ for any $\Delta\in \pol_{\sigma}^+(N_\qq)$}. Given a normal variety $Y$, we denote by $\CaDiv_{\geq 0}(Y)$ the semigroup
of effective Cartier divisors on $Y$. A \emph{$\sigma$-polyhedral divisor} over $(Y, N)$ is an element 
$$\D \in \pol_{\sigma}^{+}(N_{\qq})\otimes_{\zz_{\geq 0}}\CaDiv_{\geq 0}(Y).$$
Such a $\D$ admits a decomposition $\D  =  \sum_{Z\subset Y} \D_{Z}\otimes [Z]$, where the sum runs over the prime divisors $Z$ of $Y$,  $\D_{Z}\in \pol_{\sigma}^{+}(N_{\qq})$ for any $Z$,  \textcolor{black}{and for all but finitely many prime divisors $Z$ we have $\mathcal{D}_Z=\sigma$.}
The cone $\sigma$ is called the \emph{tail-cone} of $\D$ and is denoted by $\sigma(\D)$. The \emph{locus} of $\D$ is the open subset
$$\loc(\D):= Y\, \setminus\,  \bigcup_{Z\subset Y, \D_{Z} =  \emptyset}Z.$$
The map
$$\D:\sigma^{\vee}\rightarrow \CaDiv_{\qq}(\loc(\D)),\,\, m \mapsto \D(m):= \sum_{Z\subset Y, \D_{Z} \neq  \emptyset}\min\,\,\langle \D_{Z}, m\rangle\cdot  Z_{|\loc{\D}},$$
where $\CaDiv_{\qq}(\loc(\D))$ is the $\qq$-vector space of Cartier $\qq$-divisors on $\loc(\D)$ and $\sigma^{\vee}\subset M_{\mathbb{Q}}$ is the dual of $\sigma$ is called the \emph{evaluation map}.  Recall that a variety $Y$ is \emph{semi-projective} if its $\mathbb{K}$-algebra of global functions $A_0 = \Gamma (Y, \mathcal{O}_Y)$ is finitely generated and in addition the structure morphism $Y \to \mathrm{Spec}(A_0)$ is projective; cf.~\cite[Def.~2.1]{AH06}.

\begin{definition}
{\em 
Let $\D$ be a $\sigma$-polyhedral divisor over $(Y, N)$. We say that $\D$ is a 
\emph{proper polyhedral divisor} or a \emph{$p$-divisor}  if $\D(m)$ is semi-ample for any $m\in \sigma^{\vee}$, and $\D(m)$ is big for $m$ in the relative interior of $\sigma^{\vee}$. Note that this condition holds automatically when $\loc(\D)$ is affine.
}
\end{definition}

Let $\D$ be a $p$-divisor over $(Y, N)$. Since $\D(m) + \D(m') \leq \D(m +m')$ for all $m, m'\in\sigma^{\vee}$, the multiplication
in the function field of $Y$ induce the structure of  an $M$-graded $\oo_{\loc(\D)}$-algebra on
$$\A(\D):= \bigoplus_{m\in \sigma^{\vee}\cap M}\oo_{\loc(\D)}(\D(m)).$$
We denote by $\widetilde{X}(\D)$ the relative spectrum of $\A(\D)$ over $Y$ and by $X(\D)$ the spectrum of the ring of global 
sections. Both are naturally equipped with an algebraic $\TT$-action induced by the $M$-grading on $\A(\D)$. 
The following result explains the importance of this construction in the theory of affine $\mathbb{T}$-varieties;  see~\cite[Thms.~3.1 and 3.4; Sect.~6]{AH06}.
 
\begin{theorem}\label{thm:p-div}
The following statements hold:
\begin{itemize}
\item[$(i)$] For any $p$-divisor $\D$ over $(Y, N)$, the quasiprojective scheme  $\widetilde{X}(\mathcal{D})$ and the affine scheme $X(\D)$ are normal $\TT$-varieties. The canonical morphism $\phi\colon \widetilde{X}(\mathcal{D})\rightarrow X(\D)$ is a $\mathbb{T}$-equivariant projective birational contraction. Moreover, $X(\D)$ contains an open subset admitting a geometric quotient that is birational to $Y$. 
\item[$(ii)$] Conversely, any normal affine variety $X$ with faithful $\mathbb{T}$-action is of the form $X(\D)$
for some $p$-divisor $\D$ over some semi-projective normal variety $Y$ with structure morphism $Y \to X /\!/\TT$. 
\end{itemize}

\end{theorem}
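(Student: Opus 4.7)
The plan is to follow the two-way dictionary of Altmann--Hausen between affine $\mathbb{T}$-varieties and $p$-divisors, proving the ``construction'' direction (i) first and then inverting it for (ii).

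For (i), I would first verify that the subadditivity $\D(m) + \D(m') \leq \D(m+m')$, which is immediate from Minkowski-summing the coefficients $\D_Z$, promotes $\A(\D) = \bigoplus_{m \in \sigma^\vee \cap M} \oo_{\loc(\D)}(\D(m))$ to a quasi-coherent sheaf of $M$-graded $\oo_{\loc(\D)}$-algebras with multiplication inherited from $\kk(Y)$. Normality of the relative spectrum $\widetilde{X}(\D)$ is a local question: over a chart of $\loc(\D)$ on which each $\D(m)$ is principal, $\A(\D)$ becomes the semigroup algebra on the saturated monoid $\sigma^\vee \cap M$ and is therefore normal. Finite generation of $\Gamma(\loc(\D), \A(\D))$, which is what turns $X(\D)$ into an affine variety of finite type, would be derived from the properness hypothesis on $\D$: semi-ampleness of $\D(m)$ on $\sigma^\vee$ together with bigness on the relative interior is a multigraded analogue of finite generation of section rings of semi-ample-and-big $\qq$-divisors, and combined with semi-projectivity of $Y$ it forces each graded piece to be finitely generated over $\kk[X /\!/ \mathbb{T}]$. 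The $\mathbb{T}$-action arises tautologically from the $M$-grading; the morphism $\phi$ is induced by the inclusion of global into local sections and is hence $\mathbb{T}$-equivariant, projective (via the projective structure map $\loc(\D) \to \Spec A_0$) and birational, because bigness of $\D(m)$ on the interior makes it an isomorphism over a dense open locus. The geometric-quotient claim finally reduces to the classical identification of GIT and orbit quotients on the torus-dense open subset of $\loc(\D)$.

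For (ii), I would begin with $X = \Spec A$ and the weight decomposition $A = \bigoplus_{m \in \omega \cap M} A_m$; faithfulness of the $\mathbb{T}$-action forces $\omega$ to be strictly convex, so it will play the role of $\sigma^\vee$. Each nonzero $A_m$ is a reflexive rank-one $A_0$-module and hence determines a divisor class on $X /\!/ \mathbb{T}$. The key technical step is to produce a semi-projective normal birational model $\pi\colon Y \to X /\!/ \mathbb{T}$ on which all of these classes become Cartier simultaneously; this is most naturally constructed as a normalised Chow quotient of $X$ by $\mathbb{T}$, or by iterated blow-ups along a finite generating set of classes. On $Y$ one then realises each $A_m$ as $\Gamma(Y, \oo_Y(D(m)))$ for genuine Cartier divisors $D(m)$, and extracts polyhedra $\D_Z \in \pol_\sigma^+(N_\qq)$ attached to the prime divisors $Z \subset Y$ by Legendre-transforming the piecewise-linear coefficient functions $m \mapsto \coeff_Z D(m)$; concavity of these coefficients, which follows from the multiplicative structure on $A$, guarantees that the $\D_Z$ are genuine polyhedra with common tail cone $\sigma = \omega^\vee$. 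The properness conditions on $\D$ are finally read off from the normality and finite generation of $A$.

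The principal obstacle is (ii): the model $Y$ is not canonical and must be chosen finely enough to make all the relevant classes Cartier while remaining semi-projective, and more seriously, the passage from a collection of piecewise-linear coefficient functions $\coeff_Z D(m)$ to \emph{polyhedral} coefficients $\D_Z$ requires real convex-geometric input. By comparison, (i) is a formal multigraded relative-spectrum construction whose only quantitative content is packaged in the properness hypothesis.
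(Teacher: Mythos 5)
The paper does not prove this theorem: it is quoted as a known result with the citation \cite[Thms.~3.1 and 3.4; Sect.~6]{AH06}, so there is no in-house argument to compare against and your write-up should be read as a sketch of the Altmann--Hausen proof. Your overall architecture does match theirs --- the relative-spectrum construction with a local normality check and finite generation driven by the properness hypothesis for~(i), and, for~(ii), inverting via the weight decomposition, a well-chosen semi-projective birational model over $X/\!/\TT$ (indeed the limit of GIT quotients, which is what Altmann--Hausen use), and the Legendre-transform dictionary between concave support functions and polyhedra.

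Two of your steps in~(i), however, do not hold up. The normality argument asserts that on a chart $U \subseteq \loc(\D)$ where every $\D(m)$ is principal, $\A(\D)|_U$ becomes the semigroup algebra $\oo_U[\sigma^\vee\cap M]$. This is false whenever strict subadditivity $\D(m)+\D(m') < \D(m+m')$ occurs, which is the generic situation: already for a single prime divisor with $\D_Z=[0,1]$ one has $\D(1)+\D(-1)=-Z\neq 0=\D(0)$, so even locally the algebra has nontrivial relations and is \emph{not} a twisted semigroup algebra over $\oo_U$. Normality instead has to be argued by exhibiting $\A(\D)$ as an integrally closed subalgebra of $\kk(Y)\otimes_\kk\kk[M]$ cut out by valuation conditions. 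Second, your finite-generation argument shows only that each graded piece $\Gamma(\loc(\D),\oo(\D(m)))$ is finitely generated over $A_0$; this is much weaker than finite generation of the entire $M$-graded ring $A(\D)$, which is the actual content of the properness hypothesis and requires a genuine multigraded section-ring theorem for semi-ample divisors on a semi-projective base, not a degree-by-degree observation. In~(ii) you also need, but do not establish, that the concave coefficient functions $m\mapsto\coeff_Z D(m)$ are \emph{piecewise linear with finitely many regions} (equivalently that the $\D_Z$ are honest polyhedra), which again comes from the finite generation of $A$ and is not a formal consequence of concavity alone.
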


\subsubsection{Invariant Weil divisors on $\mathbb{T}$-varieties}
 Let $X  =  X(\D)$ be an affine $\TT$-variety associated with a $p$-divisor $\D$ and set $Y =  \loc(\D)$. 

\begin{definition}{\em 
$\TT$-invariant prime divisors on $X(\D)$ are divided into two sorts: 
\begin{itemize}
\item[$(1)$] the ones whose restriction of the vanishing order to $\kk(X)^{\TT}\simeq \kk(Y)$ is non-trivial, and 
\item[$(2)$] the divisors whose restriction of the vanishing order to $\kk(X)^{\TT}\simeq \kk(Y)$ is trivial. 
\end{itemize}
Divisors of type $(1)$ are called \emph{vertical divisors} while the ones of type $(2)$ are the \emph{horizontal divisors}.}
\end{definition}

For a $\qq$-Cartier divisor $D$ on a normal variety
$W$ and a prime divisor $Z$ on $W$, we say that the restriction $D_{| Z}$ is \emph{big} if for some $r\in \zz_{>0}$
such that $rD$ is integral Cartier, the invertible sheaf $\mathcal{O}_{W}(rD)_{|Z}$ is big. In the sequel, we denote by
$\ver(\D)$ (resp. $\ray(\D)$) the set of pairs $(Z, v)$ such that $Z$ is a prime divisor of $Y$, $v$ is a vertex of $\D_{Z}$
and $\D(m)_{|Z}$ is big for  $m\in \relint(\D_{Z}-v)^{\vee}$ (resp. the set of rays $\rho$ of $\sigma(\D)$ such that $\D(m)$ is big for 
$m\in \relint(\rho^{\perp}\cap \sigma^{\vee})$).
\textcolor{black}{ Here, ${\rm relint}$ stands for the relative interior of the cone, i.e., the interior of the cone in the smallest $\qq$-vector subspace containing it.}
The following proposition describes invariant prime divisors of $\mathbb{T}$-varieties; see, e.g.~\cite[Propositions 4.11, 4.12]{HS10}.

\begin{proposition}
Let $\mathcal{D}$ be a proper polyhedral divisor
on a normal semi-projective variety.
There are bijections:
\begin{itemize}
\item[$(1)$] 
between the pairs $(Z,v)$, 
where $Z$ is a prime divisor of $Y$ and $v$ a vertex of $\mathcal{D}_Z$,
and vertical $\mathbb{T}$-invariant prime divisors of $\widetilde{X}(\mathcal{D})$, and
\item[$(2)$] between rays of 
$\sigma:={\rm tail}(\D)$ 
and the horizontal $\mathbb{T}$-invariant prime divisors
of  $\widetilde{X}(\D)$.
\end{itemize}
\end{proposition}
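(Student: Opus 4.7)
The plan is to classify $\mathbb{T}$-invariant prime divisors on $\widetilde{X}(\D)$ via their associated $M$-homogeneous discrete valuations on the function field $\kk(\widetilde{X}(\D)) \cong \kk(Y)(M)$. Every $\mathbb{T}$-invariant prime divisor $D$ gives such a valuation $\nu_D$ (the order of vanishing along $D$), and conversely any $M$-homogeneous discrete valuation on $\kk(Y)(M)$ that is non-negative on $\A(\D)$ and whose center has codimension one arises this way. The two bijections follow by parametrizing all such valuations and matching them to the stated combinatorial data.

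In the horizontal case, $\nu_D$ restricts trivially to $\kk(Y)$ by definition, so it is determined by the group homomorphism $m \mapsto \nu_D(\chi^m)$, that is, by a lattice point $u \in N$. The sections of $\A(\D)$ over an open $U \subseteq \loc(\D)$ take the form $\bigoplus_{m \in \sigma^\vee \cap M} \oo_U(\D(m))(U)\cdot\chi^m$, so non-negativity of $\nu_D$ on $\A(\D)$ amounts to $\langle m, u \rangle \geq 0$ for all $m \in \sigma^\vee$, i.e., $u \in \sigma$. Primitivity of $\nu_D$ then singles out the primitive lattice generator of a ray of $\sigma$, yielding bijection $(2)$.

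In the vertical case, $\nu_D|_{\kk(Y)}$ is a nontrivial $\mathbb{T}$-invariant valuation on $\kk(Y)$, hence proportional to $\ord_Z$ for a unique prime divisor $Z \subseteq Y$. Since $\nu_D$ is $M$-homogeneous, its value on $\chi^m$ is linear in $m$, so one can write $\nu_D(f\chi^m) = \mu\bigl(\ord_Z(f) + \langle m, v \rangle\bigr)$ for some $v \in N_\qq$ and $\mu \in \zz_{>0}$. Comparing with the evaluation formula $\D(m) = \sum_{Z'} \min\langle \D_{Z'}, m\rangle\cdot Z'$, non-negativity of $\nu_D$ on $\A(\D)$ becomes $\langle m, v \rangle \geq \min \langle \D_Z, m \rangle$ for every $m \in \sigma^\vee$, which is equivalent to $v \in \D_Z$. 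Requiring $\nu_D$ to come from a prime divisor (rather than a higher-codimension center) forces $v$ to be a vertex of $\D_Z$, with $\mu$ its denominator, yielding bijection $(1)$.

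The main obstacle is the final realization step: one must check that each pair $(Z,v)$ with $v$ a vertex of $\D_Z$, and each ray of $\sigma$, actually arises as a $\mathbb{T}$-invariant prime divisor on $\widetilde{X}(\D)$, and that distinct combinatorial data yield distinct divisors. I would handle this via the \'etale-local toroidal structure: first, reduce to an snc $p$-divisor by pulling back $\D$ along a log resolution $Y' \to Y$ of $(Y, E(\D))$; then apply Corollary \ref{Corollary-toroidalTVar} to identify $\widetilde{X}(\D)$ \'etale-locally with the toric variety $X_{\delta_y}$; and finally invoke the classical classification of torus-invariant prime divisors on affine toric varieties by rays of the defining cone. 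The rays of $\delta_y$ split precisely into the horizontal rays of $\sigma \times \{0\}$ and the vertical rays through $(v, e_{j(Z)})$ with $v$ a vertex of $\D_Z$, matching the two classes of divisors identified by the valuation analysis above.
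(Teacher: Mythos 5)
Your valuation-theoretic approach is essentially the standard one for this statement (compare Petersen--S\"u{\ss}), and the classification of $M$-homogeneous valuations into horizontal ones determined by $u \in \sigma$ and vertical ones determined by a pair $(Z,v)$ with $v \in \D_Z$ is correct in outline, including the sign bookkeeping in the non-negativity condition $\langle m, v\rangle \geq \min\langle \D_Z, m\rangle$. Note that the paper itself gives no proof but simply cites \cite[Props.~4.11, 4.12]{HS10}, so there is no in-paper proof to compare against; I am comparing against the standard argument.

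However, your realization step has a genuine gap. Pulling $\D$ back to a log resolution $Y' \to Y$ of $(Y, E(\D))$ does \emph{not} leave $\widetilde{X}(\D)$ unchanged: the resulting $\widetilde{X}(\D')$ is related to $\widetilde{X}(\D)$ only by a proper birational $\TT$-morphism, and it acquires additional vertical divisors over the exceptional divisors of $Y' \to Y$. So Corollary~\ref{Corollary-toroidalTVar} applied after resolution classifies divisors on the \emph{wrong} variety, and you never explain how to pass back to $\widetilde{X}(\D)$ (which ones are strict transforms, which ones were contracted). The fix is to not resolve globally: for any prime divisor $Z \subset Y$, the normal variety $Y$ is smooth at the generic point $\eta_Z$ and the pair $(Y, Z)$ is automatically snc on a neighbourhood of $\eta_Z$ (with $Z$ the only relevant component of $E(\D)$ there), so Proposition~\ref{Proposition-etaleTvar} applies directly to $\widetilde{X}(\D)$ over this Zariski-open subset and exhibits the vertical divisors over $Z$ as the rays of $\cone\bigl((\sigma\times\{0\}) \cup (\D_Z \times \{1\})\bigr)$ through vertices of $\D_Z$; similarly a chart over a general point of $Y$ outside $\supp(\D)$ handles the horizontal divisors. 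This local argument at generic points both completes the realization and also proves the claim you asserted without justification, namely that the center of the valuation $\nu_{(Z,v,\mu)}$ has codimension one precisely when $v$ is a vertex of $\D_Z$ and $\mu = \mu(v)$, and that $\nu_D\vert_{\kk(Y)}$ is actually divisorial (neither of which is obvious a priori).
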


For $(Z,v)$ as in the proposition,
we denote by $D_{Z,v}$ the corresponding vertical invariant divisor on $\widetilde{X}(\D)$.
On the other hand, 
for $\rho \in \sigma(1)$, 
we denote by $D_\rho$ the corresponding horizontal invariant
divisor on $\widetilde{X}(\D)$.
We have a natural projective
$\mathbb{T}$-equivariant birational morphism
$\phi \colon \widetilde{X}(\D)\rightarrow X(\D)$.
Some of the $\mathbb{T}$-invariant
prime divisors 
of $\widetilde{X}(\D)$ are contracted by $\phi$; the following proposition characterizes those that survive on $X(\mathcal{D})$; see, e.g.,~\cite[\S 6.2]{AIPSV12}.
\begin{proposition}
Let $\mathcal{D}$ be a proper polyhedral divisor
on a normal semi-projective variety.
There are bijections:
\begin{itemize}
\item[$(1)$] between the set of vertical divisors of $X(\D)$ and the set $\ver(\D)$, and
\item[$(2)$] between the set of horizontal divisors of $X(\D)$ and the set $\ray(\D)$. 
\end{itemize}
\end{proposition}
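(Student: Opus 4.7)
The plan is to determine, for each $\mathbb{T}$-invariant prime divisor on $\widetilde{X}(\D)$ produced by the previous proposition, whether its pushforward under the projective birational contraction $\phi \colon \widetilde{X}(\D) \to X(\D)$ is again a prime divisor or whether $\phi$ contracts it to a higher-codimension subscheme. Since $X(\D) = \mathrm{Spec}\,\Gamma(\loc(\D), \A(\D))$ is the affinization of $\widetilde{X}(\D)$ and the grading is preserved, this amounts to asking, in each case, whether the weights carrying nonzero $\mathbb{T}$-semi-invariant global sections of $\A(\D)$ cut out a codimension-one image. Because sections in weight $m$ have prescribed orders of vanishing along $\mathbb{T}$-invariant divisors that can be read off from the evaluation map $m \mapsto \D(m)$, each statement becomes a numerical check that the appropriate graded piece $\Gamma(\loc(\D), \oo(\D(m)))$ grows fast enough as $m$ varies in a suitable relative interior.

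For the vertical statement, I would fix a pair $(Z,v)$ and note that sections in weight $m$ contribute to detecting $D_{Z,v}$ as a divisor on $X(\D)$ exactly when $m$ lies in $(\D_Z - v)^{\vee}$; concretely, the order of vanishing of a section $f \in \Gamma(\loc(\D), \oo(\D(m)))$ along $D_{Z,v}$ is $\langle m,v\rangle + \ord_Z(f)$, so only sections in that cone distinguish $D_{Z,v}$ from the other vertical divisors lying over $Z$. The image $\phi(D_{Z,v})$ will be divisorial precisely when these sections separate points on a dense open of $Z$, which by definition of bigness is equivalent to $\D(m)|_Z$ being big for $m$ in the relative interior of $(\D_Z-v)^{\vee}$. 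This gives the bijection with $\ver(\D)$.

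For the horizontal statement, I would similarly observe that the horizontal divisor $D_\rho$ corresponds to the face $\rho^{\perp} \cap \sigma^{\vee}$ of the weight cone: sections in weight $m \in \rho^{\perp} \cap \sigma^{\vee}$ are precisely those semi-invariants that do not vanish generically on $D_\rho$, while all other horizontal invariants do vanish. So $\phi(D_\rho)$ is divisorial in $X(\D)$ if and only if $\Gamma(\loc(\D), \oo(\D(m)))$ provides enough semi-invariants of weight $m \in \relint(\rho^{\perp} \cap \sigma^{\vee})$ to separate the generic point of $D_\rho$ from its neighbors. Again, this is the bigness of $\D(m)$ for $m$ in that relative interior, yielding the bijection with $\ray(\D)$.

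The main obstacle is translating the heuristic ``enough global sections'' into a precise equivalence with the bigness condition in both directions. For sufficiency, bigness provides the sections needed to witness $\phi(D)$ as a divisor, using that $\loc(\D)$ is semi-projective and the construction of $\A(\D)$ is functorial in $\D$. For necessity, failure of bigness forces every section of weight $m$ to vanish along $D$, so $\phi(D)$ sits inside a proper closed subscheme of $X(\D)$. One clean way to carry this out is to reduce to the toric situation via Proposition \ref{Proposition-etaleTvar} and Corollary \ref{Corollary-toroidalTVar}, where the corresponding statement — divisors surviving a toric affinization correspond to rays of the weight cone whose orthogonal hyperplane meets the relative interior where the Hilbert function has maximal growth — is classical and combinatorial; semicontinuity of bigness under the \'etale local models then promotes this to the general case.
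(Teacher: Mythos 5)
The paper itself offers no proof of this proposition: it is a recalled result from the theory of $\TT$-varieties (cf.\ the surrounding citations of \cite{HS10} and \cite{PS11}, where the survival criterion for invariant divisors under the contraction $\phi\colon\widetilde{X}(\D)\to X(\D)$ is established, building on \cite{AH06}). So your proposal has to stand on its own, and as written it does not: the equivalence ``$\phi(D_{Z,v})$, resp.\ $\phi(D_\rho)$, is a divisor of $X(\D)$ if and only if the corresponding bigness condition holds'' is precisely what must be proved, and the two concrete mechanisms you offer for it are flawed. Your necessity step is incorrect as stated: failure of bigness does not force sections of weight $m$ to vanish along the divisor in question --- by the order formula \eqref{eq:div-toric}, every nonzero $f\otimes\chi^m$ with $m\in\rho^{\perp}\cap\sigma^{\vee}$ is generically nonvanishing along $D_\rho$ whether or not $\D(m)$ is big --- and ``$\phi(D)$ sits inside a proper closed subscheme'' is true of any divisor's image; what must be shown is that the image has codimension at least two. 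The correct mechanism is the one isolated in Lemma~\ref{lem:contracted-div}: $\D(m)$ is semiample (this is the p-divisor condition), and non-bigness of its restriction forces the morphism defined by the degree-$m$ series to drop dimension on $Z$, resp.\ on $D_\rho$; conversely, to go from intrinsic bigness of $\D(m)|_Z$ to a statement about restrictions of global sections from $\loc(\D)$ you again need semiampleness. Even granting this degreewise analysis, you still owe an argument linking it to the single affinization $\phi$, which involves all weights of $\sigma^{\vee}\cap M$ simultaneously (here Lemma~\ref{lem:big-restr} is what makes the conditions independent of the chosen interior weight).

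The route you propose to make this rigorous --- reduction to the toric case via Proposition~\ref{Proposition-etaleTvar} and Corollary~\ref{Corollary-toroidalTVar}, then ``semicontinuity of bigness'' --- cannot work. Those charts are \'etale-local over the base $Y$ and model only $\widetilde{X}(\D)$; they carry no information about $\phi$, which is determined by \emph{global} sections, nor about membership in $\ver(\D)$ and $\ray(\D)$, which are global positivity conditions (bigness of $\D(m)|_Z$ on all of $Z$, bigness of $\D(m)$ on $Y$) and are not \'etale-local notions. Concretely, whenever $\loc(\D)$ is affine the morphism $\phi$ is an isomorphism and no invariant divisor is contracted, irrespective of the local toroidal structure; which divisors are contracted in general is governed by the geometry of $Y$ relative to $Y_0=X/\!/\TT$, which is exactly what the bigness conditions encode and exactly what the local models cannot see. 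A correct proof has to argue globally, degree by degree, as in \cite{PS11}, or via the orbit/fiber analysis of \cite{AH06}.
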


By abuse of notation,
the push-forward of $D_\rho$ (resp $D_{Z,v}$) to $X(\D)$
will be denoted by the same symbol. This means that for $(Z,v)\in \ver(\D)$ and $\rho\in \ray(\D)$, we will denote by $D_{Z, v}$ and $D_{\rho}$ the corresponding prime
$\TT$-divisors on $X = X(\D)$. Explicitly, the principal divisor of a homogeneous rational function $f\otimes \chi^{m}$,
with $f\in \kk(Y)^{\star}$ and $m\in M$, is given by
\begin{equation}\label{eq:div-toric}
    {\rm div}(f\otimes \chi^m) =  \sum_{\rho\in \ray(\D)} \langle m, v_{\rho}\rangle D_{\rho} + \sum_{(Z, v)\in \ver(\D)}\mu(v)({\rm ord}_{Z}(f) + \langle m, v\rangle) D_{Z, v},
\end{equation}
where $v_{\rho}\in N$ is the primitive generator of the ray $\rho$, and $\mu(v)$ is the smallest integer $r\in \zz_{>0}$ such that $rv\in N$, see e.g.~\cite[Prop.~3.14]{PS11}. In these terms, one can express the canonical class 
of the $\TT$-varieties occuring in the above discussion as follows (see e.g.~Theorem 3.21 of \emph{loc.~cit.}).

\begin{proposition} \label{Proposition-CanonicalClassTVar}
With the same notation as before, the canonical class of the $\TT$-variety $\widetilde{X} =  \widetilde{X}(\D)$
is 
$$K_{\widetilde{X}} =  \sum_{(Z, v)} (\mu(v) K_{Y, Z} + \mu(v) -1) D_{Z, v} -  \sum_{\rho\in \sigma(1)}D_{\rho},$$
where $K_{Y}:= \sum_{Z}K_{Y, Z} Z$ is a canonical divisor of $Y$
and $\sigma$ is the tail-cone of $\D$. 
Furthermore, the canonical class of the $\TT$-variety $X =  X(\D)$
is 
$$K_{X} =  \sum_{(Z, v)\in \ver(\D)} (\mu(v) K_{Y, Z} + \mu(v) -1) D_{Z, v} -  \sum_{\rho\in \ray(\D)}D_{\rho}.$$
\end{proposition}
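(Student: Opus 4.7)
Proof plan.

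The plan is to compute $K_{\widetilde X}$ by constructing an explicit rational top form whose divisor can be analysed \'etale-locally via the toric description of $\widetilde X(\D)$ provided by Corollary \ref{Corollary-toroidalTVar}, and then to deduce the formula for $K_X$ by pushing forward under the birational contraction $\phi\colon \widetilde X(\D)\to X(\D)$. Since the coefficients of a canonical Weil divisor may be computed at generic points of the relevant prime divisors, and since $Y$ is smooth along the generic point of every prime $Z\subset Y$, I may work on a neighbourhood of any such generic point where the restriction of $\D$ is snc and Corollary \ref{Corollary-toroidalTVar} applies directly.

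To construct the form, fix a basis $(e_1,\dots,e_n)$ of $M$, set $\omega_{\TT}:=\tfrac{d\chi^{e_1}}{\chi^{e_1}}\wedge\cdots\wedge\tfrac{d\chi^{e_n}}{\chi^{e_n}}$, pick a rational top form $\omega_Y$ on $Y$ with $\operatorname{div}(\omega_Y)=K_Y$, and form $\omega:=\pi^*\omega_Y\wedge\omega_{\TT}$, where $\pi\colon\widetilde X\dashrightarrow Y$ denotes the rational quotient by $\TT$. Then $\omega$ is a rational $\TT$-invariant top form on $\widetilde X$, so $\operatorname{div}(\omega)$ is a canonical divisor, supported on the $\TT$-invariant primes $D_\rho$ and $D_{Z,v}$. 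For each $y\in Y$ I use the \'etale chart $\phi\colon \widetilde X(\D_{|U})\to X_{\delta_y}$ from Corollary \ref{Corollary-toroidalTVar} and observe that the classical toric invariant form $\omega_{\delta_y}:=\tfrac{dx_1}{x_1}\wedge\cdots\wedge\tfrac{dx_d}{x_d}\wedge\omega_{\TT}$ on $X_{\delta_y}$ has divisor $-\sum_{\eta\in\delta_y(1)}D_\eta$. With local parameters $t_i=\phi^*x_i$ on $U$ adapted to $E(\D)_{|U}$ and a local presentation $\omega_Y=f\cdot dt_1\wedge\cdots\wedge dt_d$ with $\operatorname{div}(f)=K_Y|_U$, one obtains $\omega = \pi^*f\cdot t_1\cdots t_d\cdot \phi^*\omega_{\delta_y}$, and hence $\operatorname{div}(\omega) = \pi^*\operatorname{div}(f) + \sum_i \operatorname{div}(t_i) + \phi^*\operatorname{div}(\omega_{\delta_y})$.

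The rays of $\delta_y$ consist of $\rho\times\{0\}$ with primitive generator $(v_\rho,0)$ for $\rho\in\sigma(1)$, and of the rays through $(v,e_{j(Z)})$ whose primitive generator is $(\mu(v)v,\mu(v)e_{j(Z)})$ (using $v\in\tfrac{1}{\mu(v)}N$), so $\phi^*\operatorname{div}(\omega_{\delta_y}) = -\sum_\rho D_\rho - \sum_{(Z,v)}D_{Z,v}$. Applying equation \eqref{eq:div-toric} with $m=0$ yields $\pi^*Z = \sum_v \mu(v)D_{Z,v}$, so $\pi^*K_Y$ contributes $\mu(v)K_{Y,Z}$ to the coefficient of $D_{Z,v}$ and nothing to $D_\rho$ (which dominates $Y$), while $\sum_i\operatorname{div}(t_i)=\sum_i\pi^*E_i$ contributes $\mu(v)$ to the coefficient of $D_{Z,v}$ for $Z\in S_y$ and nothing to $D_\rho$. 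Summing these three contributions produces the coefficient $\mu(v)K_{Y,Z}+\mu(v)-1$ on $D_{Z,v}$ and $-1$ on $D_\rho$, giving the stated formula for $K_{\widetilde X}$.

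Finally, $K_X$ is obtained by applying $\phi_*$: by the characterisation of $\TT$-invariant primes on $X(\D)$ recalled just above, $\phi$ contracts exactly the divisors $D_{Z,v}$ for $(Z,v)\notin\ver(\D)$ and $D_\rho$ for $\rho\in\sigma(1)\setminus\ray(\D)$, so only the surviving terms remain in the pushforward. The main obstacle is the careful bookkeeping in the third paragraph: one must reconcile the three integral structures in play---the lattice $N$ containing the tail-cone rays, the rational vertex data $v\in\tfrac{1}{\mu(v)}N$, and the primitive lattice vectors of $\delta_y\subset N\times\zz^d$---in order to land on the exact coefficient $\mu(v)K_{Y,Z}+\mu(v)-1$ rather than the na\"ive $K_{Y,Z}-1$ one might first write down.
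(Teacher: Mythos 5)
The paper does not actually prove Proposition~\ref{Proposition-CanonicalClassTVar}; it cites it from Petersen--S\"u\ss{} \cite[Thm.~3.21]{PS11}. Your proof is therefore an independent derivation. It follows the standard computational strategy that is also behind the cited reference: construct a $\TT$-invariant rational top form $\omega = \pi^*\omega_Y \wedge \omega_\TT$, compute $\operatorname{div}(\omega)$ at the generic point of each $\TT$-invariant prime by passing to the local toroidal model from Corollary~\ref{Corollary-toroidalTVar}, and then push forward along $\phi\colon\widetilde X(\D)\to X(\D)$. The key steps are sound: the identification $\Phi^*\omega_{\delta_y}$ with $\pi^*\left(\tfrac{dt_1}{t_1}\wedge\cdots\wedge\tfrac{dt_d}{t_d}\right)\wedge\omega_\TT$ via the \'etale chart, the computation of the primitive ray generators $(\mu(v)v,\mu(v)e_{j(Z)})$ in $N\times\zz^d$ (correct, since the minimal $\lambda\in\zz_{>0}$ with $\lambda v\in N$ is exactly $\mu(v)$), the use of Equation~\eqref{eq:div-toric} with $m=0$ to obtain $\pi^*Z=\sum_v\mu(v)D_{Z,v}$, and the final bookkeeping giving $\mu(v)K_{Y,Z}+\mu(v)-1$ on $D_{Z,v}$ and $-1$ on $D_\rho$.

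Two small points deserve flagging. First, you list the rays of $\delta_y$ as exactly $\rho\times\{0\}$ and the rays through $(v,e_{j(Z)})$ for $Z\in S_y$, following the literal formula in Corollary~\ref{Corollary-toroidalTVar}. In fact, since $\D(y,j)_{H_i}=\sigma$ (with sole vertex $0$) for indices $i$ not in the image of $j$, the Cayley cone of $\D(y,j)$ over $\mathbb{A}^d$ also contains the rays through $(0,e_i)$ for such $i$; without them, $x_i$ would be a unit on $X_{\delta_y}$, contradicting the fact that $\Phi^*x_i=\pi^*t_i$ has a nontrivial zero divisor when $t_i$ is chosen to vanish at $y$. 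This does not affect the coefficients you compute at $D_\rho$ and $D_{Z,v}$ for $Z\in S_y$, since those correspond to the rays you do account for; but it is worth being precise because the missing rays carry the $-1$ that cancels the $+1$ from $\operatorname{div}(\pi^*t_i)$ for those indices $i$. Second, you do not explicitly treat the vertical primes $D_{Z,0}$ with $Z\notin\supp(\D)$, where $\D_Z=\sigma$, $v=0$, $\mu(0)=1$; there the formula predicts coefficient $K_{Y,Z}$, and one sees this directly from $\pi^*K_Y$ on the trivial chart $\widetilde X(\D|_U)\cong U\times X_\sigma$ near a generic point of $Z$, where the $t_i$'s can be chosen non-vanishing. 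Filling in these two bookkeeping points completes the argument; the overall structure and all of the essential computations in your proof are correct.
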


\section{Torus and finite quotients of klt type singularities}
\label{sec:torus+finite}

In this section, we show that the quotient of an affine klt type variety by a torus action or a finite action yields an affine klt type variety.
The latter statement is well-known to the experts, while the former statement is an application 
of the theory of proper polyhedral divisors.

\subsection{Torus quotients of klt spaces}
We first introduce some technical lemmata\textcolor{black}{, the first of which is immediate.}

\begin{lemma}\label{lem:contracted-div}
Let $X$ be an $n$-dimensional normal quasi-projective variety.
Let $D$ be a semiample divisor on $X$.
Let $\phi\colon X\rightarrow X'$ be the morphism induced by $D$.
Then, any prime divisor $Z$ of $X$ such that $D|_Z$ is not big
is \emph{contracted} by $\phi$, \textcolor{black}{i.e., 
the image $\phi(Z)$ has dimension at most $n-2$.} \hfill \qed
\end{lemma}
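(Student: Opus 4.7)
The plan is to exploit the fact that a semiample divisor is (after passing to a sufficiently divisible multiple) the pullback of an ample divisor under the associated morphism, and to use that pullbacks of big divisors under generically finite surjective morphisms stay big.

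More precisely, I would start by choosing an integer $m > 0$ so that $mD$ is base-point-free and $mD \sim \phi^*H$ for some ample Cartier divisor $H$ on $X'$; this is exactly the content of $\phi$ being the morphism associated to the semiample divisor $D$. Then I argue by contraposition: suppose $Z \subset X$ is a prime divisor that is \emph{not} contracted by $\phi$, i.e.\ $\dim \phi(Z) = \dim Z$. The induced morphism $\phi|_Z \colon Z \to \overline{\phi(Z)}$ is then surjective and generically finite.

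Set $W := \overline{\phi(Z)} \subseteq X'$. Since $H$ is ample on $X'$, the restriction $H|_W$ is ample on $W$, and in particular big on $W$. A generically finite surjective morphism pulls back big $\mathbb Q$-Cartier divisors to big $\mathbb Q$-Cartier divisors (the top self-intersection is positive and is preserved, up to multiplication by the degree, under generically finite pullback; equivalently, one checks that $h^0(Z, k (\phi|_Z)^*(H|_W))$ grows like $k^{\dim Z}$ by pushing forward and using that $H|_W$ is big). Therefore $(\phi|_Z)^*(H|_W)$ is big on $Z$. But
\[
(\phi|_Z)^*(H|_W) \sim (\phi^*H)|_Z \sim m D|_Z,
\]
so $mD|_Z$, and hence $D|_Z$, is big on $Z$. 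This contradicts the hypothesis that $D|_Z$ is not big, completing the proof.

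The argument is essentially formal; the one substantive input is the preservation of bigness under generically finite surjective pullback, which I would either cite from a standard reference (e.g.\ Lazarsfeld's \emph{Positivity in Algebraic Geometry I}) or verify directly via the volume/asymptotic $h^0$ interpretation. No other technical obstacle is expected.
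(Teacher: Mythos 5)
Your proof is essentially the same as the paper's: both pass to a multiple $mD\sim\phi^*H$ with $H$ ample, restrict to $Z$, and use that the pullback of an ample divisor along a non-contracting restriction is big to derive a contradiction. The only (minor) difference is that the paper asserts that $\phi|_Z$ is \emph{birational} onto its image $Z'$ (which is what happens in the application, where $\phi$ is a birational contraction), while you only assume $\phi|_Z$ is generically finite and surjective onto $\overline{\phi(Z)}$ and invoke preservation of bigness under generically finite surjective pullback. Your version is therefore a little more robust (it does not silently use that $\phi$ is birational and that $Z'$ is again a divisor), but the underlying idea is identical and both arguments are correct.
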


\begin{lemma}\label{lem:can-bundl}
Let $\psi \colon Y\rightarrow Y_0$ be a projective contraction from a quasiprojective variety $Y$ onto an affine normal variety $Y_0$.
Assume that there exists a boundary $\Delta$ on $Y$ so that 
$(Y,\Delta)$ has klt singularities
and $-(K_Y+\Delta)$ is nef and big over $Y_0$.
Then, $Y_0$ is of klt type.
\end{lemma}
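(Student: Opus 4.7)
The plan is to reduce to an application of the canonical bundle formula for klt-trivial fibrations, due to Ambro and further developed in the klt setting by Fujino--Gongyo (Amb04, FG12 in the introduction). The intermediate target is to produce a boundary $\Delta'\ge \Delta$ with $(Y,\Delta')$ klt and $K_Y+\Delta'\sim_{\qq,Y_0}0$, which upgrades $\psi$ to a klt-trivial fibration over the affine base $Y_0$.

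First, I would pass from $\psi$-nef-and-big to $\psi$-ample. Since $-(K_Y+\Delta)$ is $\psi$-nef and $\psi$-big and $\psi$ is projective over the affine $Y_0$, the relative Kodaira lemma yields an effective $\qq$-divisor $F$ on $Y$ and an $\epsilon>0$ such that $-(K_Y+\Delta+\epsilon F)$ is $\psi$-ample and $(Y,\Delta+\epsilon F)$ remains klt. Replacing $\Delta$ by $\Delta+\epsilon F$, I may assume $-(K_Y+\Delta)$ is $\psi$-ample. By the relative base-point-free theorem, $|-m(K_Y+\Delta)|$ is $\psi$-free for some sufficiently divisible $m\in\zz_{>0}$. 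A standard Bertini argument then yields a general member $H$ of this linear system with $(Y,\Delta+\tfrac{1}{m}H)$ klt; setting $\Delta':=\Delta+\tfrac{1}{m}H$, one has $K_Y+\Delta'\sim_{\qq,Y_0}0$, as required.

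A projective contraction has, by our conventions, connected fibers, so $(Y,\Delta')\to Y_0$ is a klt-trivial fibration in the sense of Ambro. The canonical bundle formula then yields effective $\qq$-divisors $B_{Y_0}$ (the discriminant part) and $M_{Y_0}$ (an effective representative of the moduli part) on $Y_0$ such that $K_Y+\Delta'\sim_{\qq}\psi^{*}(K_{Y_0}+B_{Y_0}+M_{Y_0})$ and $(Y_0,B_{Y_0}+M_{Y_0})$ is klt. This exhibits $Y_0$ as being of klt type; affineness of $Y_0$ is part of the hypothesis.

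The main obstacle is extracting an honest effective representative of the moduli part $M_{Y_0}$: the formula a priori produces it only as the descent of a b-nef b-divisor. To overcome this, I would use that $Y_0$ is affine, where any $\qq$-linear equivalence class of a semi-ample $\qq$-divisor contains effective representatives with arbitrarily small coefficients (take a general member of $|k M_{Y_0}|$ for large divisible $k$ and divide by $k$). Consequently, once the b-semi-ampleness of the moduli part of a klt-trivial fibration is invoked in our setting---as provided by the results of Fujino--Gongyo---we obtain a genuine klt boundary $B_{Y_0}+M_{Y_0}$ on $Y_0$, completing the argument.
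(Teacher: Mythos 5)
Your proposal is correct and takes essentially the same route as the paper: produce a log Calabi--Yau pair over $Y_0$ and then apply the canonical bundle formula. The only differences are cosmetic: the paper gets semiampleness of $-(K_Y+\Delta)$ over $Y_0$ directly from \cite[Cor.~1.3.1]{BCHM10} rather than via Kodaira's lemma plus the base-point-free theorem, and it quotes \cite[Lem.~1.1]{FG12}, whose conclusion already packages the discriminant and moduli parts into a single effective klt boundary on $Y_0$, so your concern about extracting an effective representative of the moduli part is resolved by the form of the canonical bundle formula cited.
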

    
\begin{proof}
Since $-(K_Y+\Delta)$ is big and nef over $Y_0$,
then it is semiample over $Y_0$ (e.g., see ~\cite[Cor.~1.3.1]{BCHM10}), and hence semiample.
Consequently, we can find $-(K_Y+\Delta) \sim_{\qq,Y_0} \Gamma\geq 0$
so that 
$(Y,\Delta+\Gamma)$ has klt singularities, cf.~\cite[Lem.~5.17(2)]{KM98}. 
Observe that by construction we have
\[
K_Y+\Delta+\Gamma \sim_{\qq,Y_0} 0. 
\]
Hence, by the canonical bundle formula, e.g., see~\cite[Lem.~1.1]{FG12}, 
we can find a $\mathbb{Q}$-divisor $\Delta_0 \geq 0$ on $Y_0$ such that
\[
K_Y+\Delta+\Gamma \sim_\qq \psi^*(K_{Y_0}+\Delta_0),
\]
and such that $(Y_0,\Delta_0)$ has klt singularities.
We conclude that $Y_0$ is of klt type.
\end{proof}

\begin{lemma}\label{lem:big-restr}
Let $\mathcal{D}$ be a proper polyhedral divisor on $Y$
with tail-cone $\sigma$.
Let $Z\subset Y$ be a prime divisor.
Let $v$ be a vertex of $\mathcal{D}_Z$.
Let $\tau \subseteq \sigma^\vee$ be the cone of all $u\in \sigma^\vee$
so that $\langle u,-\rangle$ minimizes at $v$ in $\mathcal{D}_Z$.
Assume that $\mathcal{D}(u_0)|_Z$ is big for some $u_0\in \tau$.
Then $\mathcal{D}(u)|_Z$ is big for every $u\in {\rm relint}(\tau)$.
\end{lemma}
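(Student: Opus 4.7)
The plan is to exploit superadditivity of the evaluation map $u \mapsto \mathcal{D}(u)$ together with the fact that its coefficient at $Z$ is \emph{linear} in $u$ throughout $\tau$, so that bigness at the single point $u_0$ propagates to all of $\mathrm{relint}(\tau)$ by an interpolation argument.

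First, I would record the basic inequality
$$\min\langle \mathcal{D}_W, u+u'\rangle \geq \min\langle \mathcal{D}_W, u\rangle + \min\langle \mathcal{D}_W, u'\rangle$$
valid for every prime divisor $W$ of $Y$ and every $u,u'\in\sigma^{\vee}$. This translates into the inequality of $\mathbb{Q}$-Cartier divisors $\mathcal{D}(u+u') \geq \mathcal{D}(u)+\mathcal{D}(u')$ on $\mathrm{loc}(\mathcal{D})$. When both $u,u'\in\tau$, the minima over $\mathcal{D}_Z$ are attained simultaneously at the common vertex $v$, so equality holds in the $Z$-coefficient. Hence the effective correction
$$E(u,u') := \mathcal{D}(u+u') - \mathcal{D}(u) - \mathcal{D}(u')$$
is supported away from $Z$, and in particular its restriction $E(u,u')|_Z$ is an effective $\mathbb{Q}$-divisor on $Z$.

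Next I would set up the interpolation. Since $\tau$ is a convex polyhedral cone and $u\in\mathrm{relint}(\tau)$, there exists $\lambda>0$ sufficiently small that $u_1 := u - \lambda u_0$ still lies in $\tau$ (this is precisely where the relative interior hypothesis is needed, to absorb a small perturbation in the direction of $-u_0$). Writing $u = \lambda u_0 + u_1$ and applying the previous step gives
$$\mathcal{D}(u)|_Z \;=\; \lambda\,\mathcal{D}(u_0)|_Z \;+\; \mathcal{D}(u_1)|_Z \;+\; E(\lambda u_0, u_1)|_Z.$$

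Finally, I would invoke the $p$-divisor hypothesis to close the argument: since $u_1\in\sigma^{\vee}$, the divisor $\mathcal{D}(u_1)$ is semiample on $\mathrm{loc}(\mathcal{D})$, hence $\mathcal{D}(u_1)|_Z$ is semiample and in particular $\mathbb{Q}$-linearly equivalent to an effective divisor. Combined with the effectivity of $E(\lambda u_0,u_1)|_Z$ and the bigness of $\lambda\,\mathcal{D}(u_0)|_Z$, this exhibits $\mathcal{D}(u)|_Z$ as the sum of a big $\mathbb{Q}$-Cartier class and a $\mathbb{Q}$-effective one, which is big by Kodaira's lemma. The only real subtlety is the interpolation step; everything else is a direct unravelling of the definition of the evaluation map and the standard fact that bigness is preserved by adding pseudo-effective classes.
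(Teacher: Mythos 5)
Your proof is correct and follows essentially the same route as the paper: your decomposition $u=\lambda u_0+u_1$ with $u_1\in\tau$ is just the paper's $mu=u_0+v_0$ (with $v_0\in\operatorname{relint}(\tau)$) rescaled by $1/m$, and both arguments then combine superadditivity of the evaluation map, the common minimizing vertex $v$ to see the correction term is effective and not supported on $Z$, semiampleness of $\mathcal{D}(u_1)$ from the $p$-divisor condition, and the fact that big plus pseudo-effective is big.
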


\begin{proof}
Let $u\in {\rm relint}(\tau)$.
For $m>0$ large enough, we can write $mu = u_0+v_0$, where $v_0 \in {\rm relint}(\tau)$.
Note that the defining property of $\tau$ implies that
\[
{\rm coeff}_Z(\mathcal{D}(mu)) =
{\rm coeff}_Z(\mathcal{D}(u_0) + \mathcal{D}(v_0)).
\] 
Hence, since $\mathcal{D}(mu)\geq \mathcal{D}(u_0)+\mathcal{D}(v_0)$, 
we can write 
\[
\mathcal{D}(mu) = \mathcal{D}(u_0) + \mathcal{D}(v_0) + F,
\] 
where $F$ is an effective divisor that does not contain $Z$ in its support.
Hence, we have that 
\[ 
m\mathcal{D}(u)|_Z = \mathcal{D}(u_0)|_Z +\mathcal{D}(v_0)|_Z + F|_Z.
\]
Note that $\mathcal{D}(u_0)|_Z$ is big by assumption,
$\mathcal{D}(v_0)|_Z$ is pseudo-effective, being the restriction
of a semiample divisor,
and $F|_Z$ is effective.
In particular,
$\mathcal{D}(u_0)|_Z$ is big, 
and $\mathcal{D}(v_0)|_Z+F|_Z$ is pseudo-effective.
Thus, we conclude that $\mathcal{D}(u)|_Z$ is big.
\end{proof}

With these preparations at hand,  we are now in the position to prove a version of the main result for the special case of torus actions.
The following proposition is partially motivated by~\cite{LS13}.

\begin{proposition}
\label{prop:torus-quotient-klt}
Let $X$ be an affine $\mathbb{T}$-variety.
Let $x\in X$ be a fixed point for the torus action.
Assume that $X$ has klt singularities.
Then,  the quotient $X/\!/\mathbb{T}$ contains an open affine subset $[x] \in U \subseteq X/\!/\mathbb{T}$ that contains the image $[x]$ of $x$ and is of klt type.
\end{proposition}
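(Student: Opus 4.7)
The plan is to apply the Altmann--Hausen structure theorem (Theorem~\ref{thm:p-div}) to write $X = X(\mathcal{D})$ for a proper polyhedral divisor $\mathcal{D}$ on a semi-projective normal variety $Y$, with structure morphism $\psi\colon Y \to Y_0 := X/\!/\mathbb{T}$ projective by the semi-projectivity of $Y$. After shrinking $Y_0$ to a principal affine neighborhood of $[x]$ via Lemma~\ref{lem:shrinking-around-fixed-point} (and correspondingly shrinking $Y$ and $\mathcal{D}$), the problem reduces to showing that $Y_0$ itself is of klt type. To invoke Lemma~\ref{lem:can-bundl} for $\psi$, it suffices to construct a boundary $\Delta_Y$ on $Y$ such that $(Y,\Delta_Y)$ is klt and $-(K_Y+\Delta_Y)$ is nef and big over $Y_0$; in other words, to prove that $Y$ is of Fano type over $Y_0$.

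To extract this Fano-type structure from the klt hypothesis on $X$, I would first pass to a toroidal partial resolution of $\mathcal{D}$: using Hironaka together with a subdivision of the tail cone, I obtain a refinement $\mathcal{D}'$ of $\mathcal{D}$ with $\loc(\mathcal{D}') =: Y'$ smooth and $E(\mathcal{D}')$ snc, yielding a toroidal $\mathbb{T}$-variety $\widetilde{X}(\mathcal{D}')$ (Corollary~\ref{Corollary-toroidalTVar}) that is automatically klt. The composite $\widetilde{X}(\mathcal{D}') \to X(\mathcal{D}') \to X$ is a $\mathbb{T}$-equivariant projective birational morphism, so the klt hypothesis on $X$ translates into a sub-klt relation $K_{\widetilde{X}(\mathcal{D}')} = \pi^*K_X - E$ with $E$ of coefficient strictly less than $1$ along every prime divisor. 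The central computation then combines Proposition~\ref{Proposition-CanonicalClassTVar}, which expresses $K_{\widetilde{X}(\mathcal{D}')}$ as $\sum_{(Z,v)}(\mu(v)K_{Y',Z} + \mu(v)-1)D_{Z,v} - \sum_{\rho}D_{\rho}$, with the defining positivity of a p-divisor: $\mathcal{D}'(m)$ is semi-ample for every $m \in \sigma^{\vee}$ and big for $m \in \relint(\sigma^{\vee})$. The horizontal divisors $D_\rho$ contribute the Fano direction through their coefficient $-1$, and by pushing the sub-klt relation down to $Y'$ via the geometric quotient (which separates the torus-invariant data along the vertical/horizontal decomposition), I would read off a boundary $\Delta_{Y'}$ such that $(Y', \Delta_{Y'})$ is klt and $-(K_{Y'}+\Delta_{Y'})$ is a positive $\mathbb{Q}$-combination of evaluations $\mathcal{D}'(m_i)$ with $m_i \in \relint(\sigma^\vee)$. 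Bigness of $-(K_{Y'}+\Delta_{Y'})$ over $Y_0$ then follows from the bigness of these $\mathcal{D}'(m_i)$, with Lemma~\ref{lem:big-restr} controlling bigness of restrictions along vertices where needed, and Lemma~\ref{lem:contracted-div} identifying components that are contracted by $\psi$ and hence irrelevant.

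The main obstacle will be the precise arithmetic of coefficients, namely ensuring simultaneously that (i) the coefficients of $\Delta_{Y'}$ coming from the vertical divisors stay strictly less than one, using the fact that $X$ is klt together with the interlocking of the vertex multiplicities $\mu(v)$ and the coefficients of $\mathcal{D}_Z$ in the log discrepancy formula, and (ii) enough of the Fano positivity from the horizontal rays is preserved after descending to $Y'$. Balancing these two requirements is exactly where the klt hypothesis on $X$ plays its essential role and constitutes the heart of the argument; once it is in place, Lemma~\ref{lem:can-bundl} applied to $\psi$ immediately yields that $Y_0$ is of klt type, and this $Y_0$ serves as the neighborhood $U$ of $[x]$ in $X/\!/\mathbb{T}$ required by the statement.
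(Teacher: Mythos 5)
Your overall strategy coincides with the paper's: write $X = X(\mathcal{D})$ via Altmann--Hausen, pass to a log resolution of $(Y,\supp(\mathcal{D}))$, compare the formula for $K_{\widetilde{X}(\mathcal{D})}$ from Proposition~\ref{Proposition-CanonicalClassTVar} with the pulled-back canonical class of $X$, read off a boundary on $Y$, and close via Lemma~\ref{lem:can-bundl}. Two remarks on detours you take: the subdivision of the tail cone you propose is unnecessary (and would replace $\widetilde{X}(\mathcal{D})$ by a toric modification, adding an extra layer to the discrepancy bookkeeping); the paper only needs $(Y,\supp(\mathcal{D}))$ to be log smooth, not the full toroidality of $\widetilde{X}(\mathcal{D})$. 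Also, rather than writing $-(K_Y+\Delta_Y)$ as a generic positive combination of evaluations $\mathcal{D}(m_i)$, the paper pins down a single weight $m$ with $rK_{X(\mathcal{D})} = \operatorname{div}(f\otimes\chi^m)$ after shrinking around the fixed point (Lemma~\ref{lem:shrinking-around-fixed-point}), and crucially deduces $-m\in\relint(\sigma^\vee)$ from the positive log discrepancies at the \emph{horizontal} divisors $D_\rho$ --- a step you invoke only implicitly by appealing to ``p-divisor positivity,'' without tying it to the klt hypothesis.

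The genuine gap is the effectivity of the boundary. You say you would ``read off a boundary $\Delta_{Y'}$ such that $(Y',\Delta_{Y'})$ is klt,'' and worry only about the coefficients staying strictly less than one. But the coefficient at $Z$ comes out as $\frac{\mu(v)-1}{\mu(v)} + \frac{E_{Z,v}}{\mu(v)}$ with $E_{Z,v}<1$ from the klt hypothesis on $X$; there is no lower bound on $E_{Z,v}$, so this coefficient can be negative, and $(Y,B_Y)$ is only a \emph{sub}-klt \emph{sub}-pair, not a log pair. Consequently Lemma~\ref{lem:can-bundl} cannot be applied directly to the structure morphism $Y\to X/\!/\mathbb{T}$. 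The paper's resolution of this is the step you are missing: one passes to the ample model $Y_0$ of $-(K_Y+B_Y)$ over $X/\!/\mathbb{T}$ and shows --- using Lemma~\ref{lem:big-restr} to control bigness of the restrictions $\mathcal{D}(-m)|_Z$ at vertices, and Lemma~\ref{lem:contracted-div} to identify divisors contracted by the ample-model morphism --- that every prime divisor where $B_Y$ has a negative coefficient is contracted, so that the crepant pushforward $B_{Y_0}$ is effective and $(Y_0,B_{Y_0})$ is a genuine klt pair. Only then is Lemma~\ref{lem:can-bundl} applied to $Y_0\to X/\!/\mathbb{T}$. You do cite both Lemmas~\ref{lem:big-restr} and~\ref{lem:contracted-div}, so you have the right tools in hand, but you deploy Lemma~\ref{lem:contracted-div} for the wrong morphism ($\psi\colon Y\to X/\!/\mathbb{T}$ rather than the ample-model contraction of $Y$) and you never articulate the sub-pair-to-pair passage that makes the argument close.
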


\begin{proof}
By Theorem~\ref{thm:p-div}, we have a $\mathbb{T}$-equivariant isomorphism $X\cong X(\mathcal{D})$, for some p-divisor on a normal quasi-projective variety $Y$ that is projective over $X/\!/\mathbb{T}$.
Replacing $Y$ with a log resolution of $(Y,\supp(\mathcal{D}))$ and pulling back the p-divisor $\mathcal{D}$ to the log resolution, we may assume that $(Y,\supp(\mathcal{D}))$ is log smooth, cf.~\cite[Lem.~2.1]{LS13}. 
We write $\phi\colon \widetilde{X}(\D)\rightarrow X(\D)$ for the canonical projective equivariant birational morphism.
By the klt condition, we can write
\[
K_{\widetilde{X}(\mathcal{D})} + E = \phi^*(K_{X(\mathcal{D})}).
\]
Here, $E$ is a $\qq$-divisor with coefficients 
strictly less than $1$.
As $x$ is a fixed point, according to Lemma~\ref{lem:shrinking-around-fixed-point}, we may shrink $X(\mathcal{D})$ around $x$, i.e., pass to a $\mathbb{T}$-invariant affine open neighborhood of $x$ on which we may write
\[
r(K_{X(\mathcal{D})}) = {\rm div}(f\otimes \chi^m),
\]
for some suitable $r \in \mathbb{N}$, $f\in \kk(Y)^*$, and $m\in M$. Without loss of generality, we will assume that the neighborhood is equal to the whole of $X$.

Let $\rho$ be any ray of $\sigma$, where $\sigma$ is the tail-cone of $\mathcal{D}$.
Then, by Proposition~\ref{Proposition-CanonicalClassTVar} and Equation~\eqref{eq:div-toric}, we conclude that 
\[
0< a_{D_\rho}(X(\mathcal{D}))= 
1+{\rm coeff}_{D_\rho}
(K_{\widetilde{X}(\mathcal{D})}) - 
{\rm coeff}_{D_\rho}({\rm div}(f\otimes \chi^m)) =
\langle -m, v_\rho \rangle ,
\]
where $v_\rho$ is the lattice generator of $\rho$.
We deduce that $-m\in {\rm relint}(\sigma^\vee)$. 

Let $(Z,v)$ so that the value 
$\langle -m, v\rangle$ is minimal in $\mathcal{D}_Z$.
By Proposition~\ref{Proposition-CanonicalClassTVar} and
Equation~\eqref{eq:div-toric}, we obtain that the equality
\begin{equation}\label{eq:xtilda} 
\sum_{(Z,v)}(\mu(v)K_{Y,Z}+\mu(v)-1)D_{Z,v}
-\sum_{\rho} D_\rho + E = \\
\sum_{(Z,v)}\mu(v)({\rm ord}_Z(f) + \langle m,v\rangle ) D_{Z,v} + \sum_{\rho} \langle m, v_\rho\rangle D_\rho
\end{equation} 
holds in $\widetilde{X}(\mathcal{D})$.
We write $E_{Z,v}$ 
for the coefficients of $E$ at $D_{Z,v}$.
Analogously, we write $H_Z$ for the coefficient of 
$H=\frac{1}{r}\phi^*({\rm div}(f))$ at $Z$.
For each $(Z,v)$, considering the coefficient of both sides of equality~\eqref{eq:xtilda} at $D_{Z,v}$,
we obtain that
\[
\mu(v)K_{Y,Z}+\mu(v)-1 +{\rm coeff}_{D_{Z,v}}(E) = \mu(v)({\rm ord}_Z(f)+\langle m, v\rangle) 
\] 
holds for every $Z\subset Y$.
Dividing by $\mu(v)$, we deduce that 
\[
K_{Y,Z}+\sum_{Z\subset Y} 
\left( 
\frac{\mu(v)-1}{\mu(v)} + \frac{E_{Z,v}}{\mu(v)}
\right) =
H_Z + \langle m, v\rangle,
\] 
holds in $Y$.
Note that $H_Z\sim_\qq 0$.
Hence, if we set 
\[
B_Y = \sum_{Z\subset Y}
\left( 
\frac{\mu(v)-1}{\mu(v)} + \frac{E_{Z,v}}{\mu(v)}
\right)Z,
\]
we get that 
\begin{equation}\label{eq:nef-model}
K_{Y} + B_Y \sim_{\qq, X/\!/\mathbb{T}} -\mathcal{D}(-m),
\end{equation}
where $(Y,B_Y)$ is log smooth and
the coefficients of $B_Y$ are less than $1$.
Indeed, as noted before, the coefficients $E_{Z,v}$ of $E$ are less than one by the klt assumption.
Note that $(Y,B_Y)$ is only a sub-pair, as
$B_Y$ may have negative coefficients.
As $-m$ is in the relative interior of $\sigma^{\vee}$ we deduce from equality~\eqref{eq:nef-model} that $-(K_Y+B_Y)$ is semiample and big over $X/\!/\mathbb{T}$.

Let $Y_0$ be the ample model of $-(K_Y+B_Y)$ over $X/\!/\mathbb{T}$.
Let $\phi_0\colon Y\rightarrow Y_0$ be the induced projective birational morphism.
Then, we have that 
\begin{equation}\label{eq:ample-model}
\phi_0^*(K_{Y_0}+B_{Y_0})=K_Y+B_Y,
\end{equation}
for certain $\qq$-divisor $B_{Y_0}$ on $Y_0$. In a next step, we are going to show that $B_{Y_0}$ is effective. Indeed, if $\mathcal{D}(-m)|_Z$ is big, 
then by Lemma~\ref{lem:big-restr}
we conclude that $D_{Z,v}$ is not contracted
by the projective birational map $\phi: \widetilde{X}(\mathcal{D})\rightarrow X(\mathcal{D})$.
Conversely, if $E_{Z,v}$ is a negative number, 
then $\mathcal{D}(-m)|_Z$ is not big.
In particular, we conclude that if $B_Y$ has a negative coefficient 
at the prime divisor $Z$,
then $\mathcal{D}(-m)|_Z$ is not big,
so $-(K_Y+B_Y)|_Z$ is not big.
By Lemma~\ref{lem:contracted-div},
it follows that the exceptional locus
of the projective birational morphism 
$\phi_0$
contains all the prime divisors
at which $B_Y$ has negative coefficients, and hence that $B_{Y_0}$ is an effective divisor.
From Equality~\eqref{eq:ample-model} and the construction of $B_Y$, see the line after Equation~\eqref{eq:nef-model}, we conclude that 
$(Y_0,B_{Y_0})$ is a log pair with klt singularities.
Thus, we conclude that
$(Y_0,B_{Y_0})$ has klt singularities
and $-(K_{Y_0}+B_{Y_0})$ is ample over $X/\!/\mathbb{T}$.
By Lemma~\ref{lem:can-bundl}, we conclude that
$X/\!/\mathbb{T}$ is of klt type, as claimed.
\end{proof}

\subsection{Finite quotients of klt type singularities}
Now, we turn to prove that the quotient of an affine variety of klt type by a finite group
is again an affine variety of klt type; this is well-known to the experts, see~\cite[Prop.~2.11]{Mor21}.
We give a short argument for the sake of completeness.

\begin{proposition}\label{prop:finite-quotient}
Let $X$ be an affine variety of klt type.
Let $G$ be a finite group acting on $X$.
Then, the affine variety $X/\!/G$ is of klt type as well.
\end{proposition}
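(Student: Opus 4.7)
The plan is to carry out the classical averaging-plus-ramification argument in the spirit of \cite[Prop.~5.20]{KM98}. The two main ingredients are: (a) making the klt boundary on $X$ invariant under $G$ by symmetrization, and (b) descending the symmetrized klt pair along the finite Galois cover $\pi\colon X\to Y:=X/\!/G$ via the Hurwitz formula.

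For the first step, fix a boundary $B$ on $X$ with $(X,B)$ klt and set
\[
B^G := \tfrac{1}{|G|}\sum_{g\in G} g^*B.
\]
Each automorphism $g\in G$ sends $(X,B)$ to the klt pair $(X,g^*B)$, so each $K_X+g^*B$ is $\qq$-Cartier. Using $g^*K_X\sim K_X$ one gets that $|G|(K_X+B^G)=\sum_g(K_X+g^*B)$ is $\qq$-Cartier, hence so is $K_X+B^G$. The $G$-invariant pair $(X,B^G)$ is then klt by convexity of log discrepancies: on a common log resolution dominating all $(X,g^*B)$, the coefficients of the log pull-back of $B^G$ are convex combinations of the coefficients for the individual $(X,g^*B)$, all of which are $<1$ by the klt hypothesis.

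For the second step, the Riemann--Hurwitz formula gives $K_X=\pi^*K_Y+R$, where $R\ge 0$ is the ramification divisor weighted by $r_E-1$ along each prime $E$ ramifying over $Y$. The divisor $B^G+R$ is $G$-invariant, and since $\pi$ is a finite Galois quotient there exists a unique $\qq$-divisor $B_Y$ on $Y$ with $\pi^*B_Y=B^G+R$. Then $\pi^*(K_Y+B_Y)=K_X+B^G$, which forces $K_Y+B_Y$ to be $\qq$-Cartier (finite pull-back reflects the $\qq$-Cartier property) and $B_Y$ to be effective (finite pull-back reflects effectivity). The log-discrepancy transformation formula under a finite cover, $a_E(X,B^G)=r_E\cdot a_F(Y,B_Y)$ for $E$ dominating $F$, propagates positivity from $X$ to $Y$, so $(Y,B_Y)$ is klt.

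The only real point of care is the descent step: one has to verify that a $G$-invariant $\qq$-divisor on $X$ actually descends to a $\qq$-Cartier divisor on $Y$, and that the log-discrepancy formula under the finite ramified cover $\pi$ is applied correctly. Both facts are standard for finite Galois quotients of normal varieties, which is precisely why the analogous statement for positive-dimensional reductive group actions, the main result of the paper, demands much more substantial tools.
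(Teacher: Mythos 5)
Your proposal is correct and takes essentially the same route as the paper: average the boundary to obtain a $G$-invariant klt pair $(X,B^G)$, descend via Riemann--Hurwitz to a pair $(X/\!/G,B_{X/\!/G})$, and transfer positivity of log discrepancies through the finite cover as in \cite[Prop.~5.20]{KM98}. The only cosmetic difference is that the paper verifies the discrepancy comparison explicitly via a fiber-product diagram over a birational model of $X/\!/G$, whereas you quote the standard formula $a_E(X,B^G)=r\cdot a_F(X/\!/G,B_{X/\!/G})$ together with the (correct, standard) descent facts for Galois quotients.
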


\begin{proof}
We denote the (finite) quotient morphism 
by $\phi\colon X\rightarrow X/\!/G$, and recall that $X/\!/G$ is a normal affine variety. Since $X$ is of klt type, we can find a boundary $B$ so that the pair $(X,B)$ is klt.
We define 
\[
B^G:=\frac{\sum_{g\in G}g^*B}{|G|}.
\]
Note that the divisor $K_X+B^G$ is $\qq$-Cartier and the pair $(X,B^G)$
has klt singularities.
Indeed, for every prime exceptional divisor $E$ over $X$ and $\pi \colon Y \to X$ extracting $E$, due to linearity of $\mathrm{coeff}_E$ and the pullback of $\mathbb{Q}$-Cartier divisors, we have that 
\begin{align*}
    a_E(X,B^G) &=1+ \mathrm{coeff}_E(K_Y-\pi^*(K_X+B^G)) = 1+ \mathrm{coeff}_E(K_Y -\pi^*(K_X)) -\mathrm{coeff}_E(\pi^*(B^G)) \\
    &= \frac{|G|(1+ \mathrm{coeff}_E(K_Y -\pi^*(K_X))) - \sum_{g \in G} \mathrm{coeff}_E(\pi^*(g^* B))}{|G|} \\
    &= \sum_{g\in G} \frac{a_E(X,g^*B)}{|G|} >0.
\end{align*}
Replacing $B$ with $B^G$, we may assume that
the pair $(X,B)$ is klt and
$B$ is a $G$-invariant divisor. By the Riemann-Hurwitz formula, there exists an effective boundary divisor $B_{X/\!/G}$ for which
\begin{equation}\label{eq:defeq1}
K_X+B = \phi^*(K_{X/\!/G}+B_{X/\!/G}).
\end{equation}
In order to prove our claim, it suffices to show that the pair $({X/\!/G},B_{X/\!/G})$ has Kawamata log terminal singularities. Using \eqref{eq:defeq1}, this however can be deduced from \cite[Prop.~5.20]{KM98}.
\end{proof}

\section{Being of klt type is an \'etale property}
\label{sec:etale}

In this section, we prove that the klt type condition is an \'etale property.
Further on, we use this statement together with Luna's \'etale slice theorem to prove a reduction of our problem to the case of neighborhoods of points fixed by a  reductive group action. 

\begin{proposition}\label{prop:klt-type-etale}
 Let $X$ be a normal quasi-projective algebraic variety which is \'etale locally of klt type.
 Then, there exists a boundary $B$ on $X$ for which $(X,B)$ is klt; i.e., $X$ is of klt type.
\end{proposition}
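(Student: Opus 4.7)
The plan is to argue in two stages: first, to descend from étale local klt type to Zariski local klt type via a Galois closure and averaging argument (mirroring Proposition~\ref{prop:finite-quotient}); second, to globalize, assembling the Zariski local boundaries into a single global boundary $B$ on $X$ with $(X,B)$ klt. The second step will be the main obstacle.

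For the descent, fix $x\in X$ and choose an étale morphism $f\colon U\to X$ whose image contains $x$ together with a klt pair $(U,B_U)$. After shrinking $U$ around a preimage of $x$ and invoking Zariski's main theorem for étale morphisms, we may assume $f$ is finite étale surjective onto an affine open neighborhood $x\in V\subseteq X$. Taking a Galois closure $g\colon\widetilde U\to V$ with group $G$, factoring through $U$, and pulling back $B_U$ along the étale map $\widetilde U\to U$ yields a klt pair $(\widetilde U,B_{\widetilde U})$; averaging gives
\[
B_{\widetilde U}^G:=\frac{1}{|G|}\sum_{\sigma\in G}\sigma^{\ast}B_{\widetilde U},
\]
a $G$-invariant boundary with $(\widetilde U,B_{\widetilde U}^G)$ still klt by convexity of log discrepancies. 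Since $g$ is étale, and hence unramified, this boundary descends \emph{without} any Riemann--Hurwitz correction to a $\mathbb{Q}$-divisor $B_V$ on $V$ with $g^{\ast}B_V=B_{\widetilde U}^G$. Both the $\mathbb{Q}$-Cartier condition on $K_V+B_V$ and positivity of log discrepancies descend along finite étale surjective morphisms, so $(V,B_V)$ is klt.

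For the globalization, quasi-compactness yields a finite affine cover $X=V_1\cup\cdots\cup V_n$ with $(V_i,B_i)$ klt. The naive attempt --- taking $B=\frac{1}{n}\sum_i\overline{B_i}$ of the closures --- preserves klt-ness of discrepancies but generally fails to make $K_X+B$ globally $\mathbb{Q}$-Cartier, since $\overline{B_j}|_{V_i}$ need not be $\mathbb{Q}$-Cartier for $j\neq i$. The main obstacle is to patch the $B_i$ so as to retain both the $\mathbb{Q}$-Cartier-ness of $K_X+B$ and the klt discrepancy bounds simultaneously. One plausible route is to fix a very ample divisor $H$ on $X$, exploit that the differences $B_i-B_j$ are $\mathbb{Q}$-Cartier on the overlaps, and use a Bertini-type argument to add common effective $\mathbb{Q}$-members of $|mH|_{\mathbb{Q}}$ for $m\gg 0$ that reconcile the local pieces and enforce the $\mathbb{Q}$-Cartier property globally without spoiling klt-ness. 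Alternatively, one can invoke the principle that on a quasi-projective normal variety klt type is characterized pointwise by the local rings $\mathcal{O}_{X,x}$, in which case Step~1 alone already yields the result.
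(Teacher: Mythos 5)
Your proposal has the right two-stage skeleton, and your second stage (gluing Zariski-local boundaries via an ample divisor $H$ and a Bertini-type argument) is essentially what the paper does, using \cite[Cor.~2.33]{KM98} on general members of the basepoint-free system $|-mK_X+mH|$ to produce a single global boundary $D/m$ with $K_X+D/m\sim H$ Cartier. However, there is a genuine gap in your first stage, and it is precisely the point the paper's proof is built around.

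You claim that after shrinking $U$ around a preimage of $x$ and invoking Zariski's Main Theorem, one may assume the \'etale map $f$ is \emph{finite} \'etale surjective onto a Zariski open $V\subseteq X$. This is false: ZMT gives only a factorization $U\hookrightarrow\overline U\to V$ with $\overline U\to V$ finite, and the restriction of the composite to a Zariski open need not be finite, because the open piece $U$ need not be closed in $\overline U$. A concrete example is $U=\mathbb{A}^1\setminus\{0,1\}\to X=\mathbb{A}^1\setminus\{0\}$, $z\mapsto z^2$; this is surjective \'etale, but around $x=1$ with preimage $u=-1$ no Zariski opens $U'\ni u$, $V\ni x$ make $f\colon U'\to V$ finite (the fiber cardinality jumps on every Zariski neighborhood of $1$). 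Consequently, the subsequent claims --- that the Galois closure $g\colon\widetilde U\to V$ is \'etale and that the averaged boundary descends ``without any Riemann--Hurwitz correction'' --- do not hold. The paper instead accepts that after ZMT the finite morphism $U\to X$ (and its Galois closure $Y\to X$) is \emph{ramified}, and the heart of its argument is the bookkeeping of the ramification divisors $R_f$ and $R_h$: the pulled-back boundary $\Delta_Y-R_h$ is only a sub-boundary, one corrects it to $\Delta_Y+(R_f-g^*R_h)-R_f$ to restore effectivity of $\Delta_Y'=\Delta_Y+(R_f-g^*R_h)$, runs the Bertini argument on $Y$ with the twist by $R_f$, averages over $G$, and finally descends via Riemann--Hurwitz. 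Your proposal omits all of this because of the unjustified finiteness reduction. Finally, your closing ``alternative'' --- that klt type is characterized pointwise by $\mathcal{O}_{X,x}$ --- is essentially a restatement of what is being proved, so it cannot serve as an independent route.
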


\begin{proof}
First, we show that $X$ is Zariski locally of klt type if and only if $X$ is of klt type. 
For this, we use the definition of multiplier ideal sheaves $\mathcal{J}(X)$ for normal - not necessarily $\mathbb{Q}$-Gorenstein varieties - from~\cite[Definition~4.8]{dFH09}. By~\cite[Corollary~7.6]{dFH09} combined with ~\cite[Theorem~1.2]{dFH09}, $X$ is of klt type if and only if the coherent ideal sheaf $\mathcal{J}(X)$ equals the structure sheaf $\mathcal{O}_X$. Since this equality is clearly local in the Zariski topology, we get the desired equivalence.

The above first step allows us to assume for the remainder of the proof that $X$ itself admits a surjective \'etale morphism from a klt type variety. \textcolor{black}{So, let $u_0\colon U_0\rightarrow X$ be an \'etale
cover such that there exists an effective divisor $\Delta_{U_0}$ on $U_0$ so
that $(U_0,\Delta_{U_0})$ has klt singularities.} We have a diagram as follows:
\[
\xymatrix{
U_0 \ar[dr]_-{u_0} \ar@{^{(}->}[r] & U\ar[d]^-{u} & Y\ar[l]^-{h} \ar[ld]^-{f} \\
& X, &
}
\]
\textcolor{black}{where $u\colon U\rightarrow X$ is the finite morphism obtained by applying Grothendieck's form of Zariski's Main Theorem \cite[p.~209]{MumfordRedBook},}
and
$f\colon Y\rightarrow X$ is the Galois closure of $U\rightarrow X$.
In particular, both $h$ and $f$ are given by the quotient of finite groups acting on $Y$.
Let $G$ be the finite group acting on $Y$ so that
$X=Y/G$.
Let $R_f$ be the ramification divisor of $f$
and let $R_h$ be the ramification divisor of $h$.
We have that $R_f\geq R_h$ and $R_f$ is $G$-invariant.
Then, for every $g\in G$, we have that $R_f\geq g^*R_h$. In particular, for every $g\in G$ and $U\subset Y$ open, we have 
\begin{equation}\label{eff}
R_f|_U  \geq g^*R_h|_U.
\end{equation} 
Let $U_{0,Y}$ be the pre-image of $U_0$ in $Y$ with respect to $h$.
Let $(U_{0,Y},\Delta_Y - R_h)$ be the log pull-back
of $(U_0,\Delta_{U_0})$ to $U_{0,Y}$, i.e., 
we have that 
\[
h^*(K_{U_0}+\Delta_{U_0})=
K_{U_{0,Y}}+\Delta_Y-R_h.
\] 
Note hat the sub-pair
\begin{equation}\label{eq:sub-klt-pair}
(U_{0,Y},\Delta_Y - R_h)
\end{equation}
is sub-klt, as it is the pull-back of a klt pair (see, e.g.,~\cite[Prop.~5.20]{KM98}).
Acting with $G$, we obtain an open affine cover
$U_1,\dots,U_k$ of $Y$ such that $(U_i, \Delta_i - g^*R_h|_{U_i})$
is sub-klt for each $i\in \{1,\dots, k\}$.
Indeed, these pairs are sub-klt as the pair~\eqref{eq:sub-klt-pair} is sub-klt.
In the previous sub-pair, we have 
$U_i=g(U_{0,Y})$ and $\Delta_i=g^*\Delta_Y$ for some $g\in G$.
Set
\[
\Delta'_i = \Delta_i + (R_f|_{U_i} - g^*R_h|_{U_i}) \geq 0.
\]
Here, the fact that $\Delta'_i\geq 0$ follows from Equation~\eqref{eff}.
Then, we obtain that $(U_i,\Delta_i' - R_f|_{U_i})$
is a sub-klt pair for each $i\in \{1,\dots, k\}$. 

There exists a natural number $m$ such that for each $i$ we have $\Delta'_i \in |(-mK_Y + m R_f)|_{U_i}|$.
Next, pick $H$ ample on $Y$
such that
$\mathcal{O}(-mK_Y +mR_f+mH)$ is globally generated. \textcolor{black}{Then, by the considerations of the previous paragraph we may assume that for each $i \in \{1, \dots, k\}$ there exists an element $D_i \in |-mK_Y +mR_f+mH|$ such that the sub-pair 
\begin{equation}\label{subpair1}
    \left(U_i, (\frac{1}{m}D_i - R_f)|_{U_i} \right)
\end{equation}
is sub-klt. Applying \cite[Cor.~2.33]{KM98} to  the sub-pair $(U_i, -R_f)$ and comparing discrepancies over $U_i$ with the discrepancies of the sub-pair \eqref{subpair1}, we conclude that for a general element $D\in |-mK_Y + mR_f + mH|$, the sub-pair $(Y, \frac{1}{m}D - R_f)$ is sub-klt. From this, we infer that also the $G$-closure
\[
\left( 
Y, \sum_{g\in G} \frac{g^*D}{|G|m} - R_f
\right) 
\] of the sub-pair just considered is a $G$-invariant sub-klt pair, cf.~the first paragraph of the proof of Proposition~\ref{prop:finite-quotient}.} We set 
\[
\Gamma_Y:= \sum_{g\in G}\frac{g^*D}{|G|m} - R_f.
\]

By the Riemann-Hurwitz formula, there exists a boundary divisor $\Gamma_X$ on $X$ with
\[
K_Y + \Gamma_Y = f^*(K_X+\Gamma_X).
\]
In particular, $(X,\Gamma_X)$ is a log pair, \textcolor{black}{i.e., $\Gamma_X$ is effective, since the negative part of $\Gamma_Y$ was $-R_f$ with $R_f$ the ramification divisor of $f$.}
Moreover, \cite[Prop.~5.20]{KM98} implies that the pair $(X,\Gamma_X)$ is Kawamata log terminal. We conclude that the variety $X$ is of klt type, as claimed.
\end{proof}

\begin{remark}
{\em The statement of Proposition~\ref{prop:klt-type-etale} is the same as that of \cite[Lem.~8.5]{CS21}. The main point of the proof given above is to fill in the argument for the first claim of the proof given in \emph{loc.~cit.}. While this uses ideas similar to those presented in the main part of the proof of \emph{loc.~cit.}, it also leads to a reorganization of the argument. }
\end{remark}

\section{Lifting group actions to the iterated Cox ring}
\label{sec:iteration}

In this section, we first define the iteration of (aff-local) Cox rings and then show how actions of \emph{connected reductive groups} can be lifted to the spectra of iterated Cox rings.

\subsection{Iteration of Cox rings}

Iteration of Cox rings has been introduced in~\cite{ABHW18}. 
The underlying idea is very simple: In the classical case of the usual Cox ring $R(X)$ of a variety $X$ graded by $\Cl(X)$, one considers the Cox ring $R(\overline{X})$ of $\overline{X}:=\Spec R(X)$ and iterates this procedure. 

The natural question arises whether this iteration comes to a halt, i.e., ---when we assume finite generation of Cox rings at every step--- whether the class group and thus the Cox ring becomes trivial after finitely many steps.
This holds true in many cases: trivially for toric varieties, for ADE surface singularities, and more generally for klt quasi-cones with a torus action of complexity one~\cite{ABHW18, Bra19}, and, due to~\cite{BM21}, for the $\Cl(X,x)$-graded local Cox rings $R(X_x)$ of klt singularities.

In the setting that focusses on aff-local Cox rings in the sense of Definition~\ref{def:aff-loc-Cox-ring} 
we want the iteration to reflect what happens above the singularity $x$. So we have to iterate with respect to the local class group at the \emph{distinguished point} $x' \in X'$ lying over $x$, c.f.~Proposition~\ref{prop:aff-loc-Cox-ring} \textcolor{black}{(4). Item (5)} of this proposition tells us that the gr-localization of $X'$ at $x'$ yields the local Cox ring. Moreover, by~\cite[Lem.~2.10]{BM21}, the local class group at $x' \in X'$ equals the class group of the gr-localization. Thus, the iteration of aff-local Cox rings defined in this way reflects the iteration of local Cox rings $R_{X_x}$.

Motivated by these considerations we make the following definition; we continue to use the notation of Definition~\ref{def:aff-loc-Cox-ring}.
\begin{definition}
{\em 
Let $X$ be an affine variety of klt type
and $x\in X$.
We denote $R^{(1)}:=R_{X,x}$, $X^{(1)}:=X'$, and $x^{(1)}:=x'$. For $n$ in $\nn$, we choose $K,L \subseteq \WDiv(X^{(n-1)})$ and $\chi \colon L \to \kk(X^{(n-1)})^*$ as in Definition~\ref{def:aff-loc-Cox-ring} and define recursively 
$$
R^{(n)}:= R_{X^{(n-1)},x^{(n-1)}},
\quad
X^{(n)}:=\Spec R^{(n)},
$$
and $x^{(n)} \in X^{(n)}$ to be the unique invariant point in the preimage of $x^{(n-1)}$. 
We call the (possibly infinite) chain of pointed affine varieties 
$$
\cdots \to (X^{(3)},x^{(3)}) \to (X^{(2)},x^{(2)}) \to (X^{(1)},x^{(1)}) \to (X,x)
$$
the \emph{iteration of aff-local Cox rings} of $x \in X$.
}
\end{definition}

\begin{proposition}
\label{prop:Cox-ring-iteration}
Let $X$ be an affine variety of klt type
and $x\in X$.
Then, the following statements hold:
\begin{enumerate}
    \item For every $n \in \nn$, the $\kk$-algebra $R^{(n)}$ is finitely generated, and $X^{(n)}$ is of klt type. 
    \item There exists an $n_0 \in \nn$, such that the local class group $\Cl(X^{(n_0)},x^{(n_0)})$ is trivial; i.e., $x^{(n_0)} \in X^{(n_0)}$ is factorial. If $n_0$ is minimal with this property, we say that \emph{the iteration of Cox rings of $x \in X$ stops at $x^{(n_0)} \in X^{(n_0)}$}.
    \item For $1 \leq n \leq n_0$, the map $\varphi^{(n)}\colon X^{(n)} \to X^{(n-1)}$ is a good quotient by the quasi-torus 
    $$
    H^{(n)}:=\Spec \kk[\Cl(X^{(n-1)},x^{(n-1)})].
    $$
    \item For $1 \leq n \leq n_0$, the composed map
    $$
    \psi^{(n)} := \varphi^{(1)} \circ \ldots \circ \varphi^{(n)} \colon (X^{(n)},x^{(n)}) \to (X,x)
    $$
    down to $X$ is a GIT-quotient by a solvable reductive group $S^{(n)}$.
\end{enumerate}
\end{proposition}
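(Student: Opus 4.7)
The plan is to establish the four items in order, leveraging Proposition~\ref{prop:aff-loc-Cox-ring} at every step and invoking the termination of iteration of local Cox rings proved in~\cite{BM21}.

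For Item (1), I would argue by induction on $n$: the base case $n=1$ is literally Items~(1)--(2) of Proposition~\ref{prop:aff-loc-Cox-ring}, and the inductive step is the same proposition applied to the pointed affine klt type variety $(X^{(n-1)}, x^{(n-1)})$ supplied by the previous step. Item (3) is an immediate packaging of Proposition~\ref{prop:aff-loc-Cox-ring}(3)--(4): the $\Cl(X^{(n-1)},x^{(n-1)})$-grading on $R^{(n)}$ corresponds to a strongly stable action of the quasi-torus $H^{(n)}$ whose good quotient is the structural map $\varphi^{(n)}$.

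Item (2) is the substantive step and I expect it to be the main obstacle. The idea is to transfer termination from the \emph{local} Cox-ring iteration studied in~\cite{BM21} to the \emph{aff-local} iteration defined here. By Proposition~\ref{prop:aff-loc-Cox-ring}(5), the gr-localization of $X^{(n)}$ at the distinguished invariant point $x^{(n)}$ is the spectrum of the local Cox ring $R_{X^{(n-1)}_{x^{(n-1)}}}$; by \cite[Lem.~2.10]{BM21}, the local class group $\Cl(X^{(n)},x^{(n)})$ is unchanged under gr-localization. Hence the chain of groups $\bigl(\Cl(X^{(n)},x^{(n)})\bigr)_{n\geq 1}$ is precisely the chain of divisor class groups appearing in the iteration of local Cox rings of the klt singularity $(X,x)$, and the latter is shown to terminate at the factorial stage in~\cite{BM21}. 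The minimal $n_0$ at which triviality is reached is exactly the $n_0$ required.

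For Item (4), the composition of the good quotients in Item (3) ought to be realized as the good quotient for an action of an iterated extension of quasi-tori
\[
1 \to H^{(n)} \to S^{(n)} \to S^{(n-1)} \to 1, \qquad S^{(1)} = H^{(1)},
\]
which is constructed by successively lifting the $H^{(k)}$-actions along the tower using the functoriality of the Cox construction (the details of this lifting being precisely the content of the subsequent subsection on lifting group actions). Granted this, the solvability of $S^{(n)}$ is automatic since iterated extensions of abelian groups are solvable, and reductivity in the paper's sense reduces to showing that the identity component of $S^{(n)}$ contains no nontrivial connected unipotent normal subgroup: each $(H^{(k)})^0$ is a torus, and any extension of a torus by a torus inside a connected algebraic group is again a torus, so $(S^{(n)})^0$ is a torus. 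Finally, since every $X^{(k)}$ is affine and each $\varphi^{(k)}$ is an affine invariant-theoretic quotient, composing gives $X = \Spec \kk[X^{(n)}]^{S^{(n)}}$, exhibiting $\psi^{(n)}$ as a GIT quotient.
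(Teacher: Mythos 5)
Your proposal is correct and follows essentially the same route as the paper: Item (1) by recursion through Proposition~\ref{prop:aff-loc-Cox-ring}, Item (3) as a repackaging of that proposition, Item (2) by transferring to the gr-localization via Proposition~\ref{prop:aff-loc-Cox-ring}(5) and \cite[Lem.~2.10]{BM21} and invoking termination of local Cox ring iteration from \cite{BM21}, and Item (4) via the iterated quasi-torus extensions of \cite{ABHW18}. The only stylistic difference is that for Item (4) the paper simply cites \cite[Prop.~4.11]{BM21} whereas you spell out the algebraic-group argument for solvability and reductivity of $S^{(n)}$ directly; also note that the quasi-torus lifting you invoke is discussed in the remark preceding the proof (citing \cite{ABHW18}), not the subsequent subsection (which concerns lifting a connected reductive group $G$, not the $H^{(k)}$).
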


\begin{proof}
Item (1) follows directly from Items (1) and (2) of Proposition~\ref{prop:aff-loc-Cox-ring}, since (recursively) $X^{(n-1)}$  fulfills the requirements of Definition~\ref{def:aff-loc-Cox-ring} for every $n \in \nn$. 

Item (2) follows because the iteration of aff-local Cox rings reflects the iteration of local Cox rings as defined in~\cite{BM21}. 
In particular,  denoting by $X^{(1)}_{x^{(1)}}$ and $X^{(1)}_{(x^{(1)})}$ the localization   and gr-localization of  $X^{(1)}$ at $x^{(1)}$ respectively, Item (4) of Proposition~\ref{prop:aff-loc-Cox-ring} tells us that $X^{(1)}_{(x^{(1)})}$ equals the spectrum of the local Cox ring $R_{X_x}$, and moreover, $X^{(1)}_{x^{(1)}} = (X^{(1)}_{(x^{(1)})})_{x^{(1)}}$. By \cite[Lem. 2.10]{BM21}, the maps $X^{(1)}_{x^{(1)}} \hookrightarrow X^{(1)}_{(x^{(1)})} \hookrightarrow X^{(1)}$ induce isomorphisms of class groups $\Cl(X^{(1)}_{x^{(1)}}) \cong \Cl(X^{(1)}_{(x^{(1)})}) \cong \Cl(X^{(1)},x^{(1)})$. This holds analogously for any $n \in \mathbb{N}$ and so, the iteration of aff-local Cox rings stops when the iteration of local Cox rings stops. This happens after finitely many steps with a factorial point $x^{(n_0)} \in X^{(n_0)}$ by \cite[Thm. 4.17]{BM21}, which in turn relies on the finiteness of local fundamental groups of klt singularities~\cite[Thm. 1]{Bra20}.

Item (3) is clear by definition, cf.~also the proof of Proposition~\ref{prop:aff-loc-Cox-ring}. 

The proof of Item (4) is analogous to that of~\cite[Cor. 4.12]{BM21} (see also~\cite[Thm.~1.6]{ABHW18}).
\end{proof}

\subsection{Lifting of group actions}

In the last subsection, we defined the iteration of aff-local Cox rings. 
Now, we turn to the fact that certain group actions can be lifted to the spectra of Cox rings.
This property turns out to be the key for transferring statements about quotients of klt varieties $X$ (with trivial boundary) or even factorial ones to the case of a klt pair $(X,\Delta)$ (with nontrivial boundary). The main reference is~\cite[Sec 4.2.3]{ADHL15} in the classical case and~\cite[Sec.~4.1]{BM21} for generalized Cox rings. 

First, we recall here~\cite[Def.~4.2.3.1]{ADHL15} in our setting.

\begin{definition}
\label{def:lifting-to-Cox}
{\em 
Let $X$ as well as $K,L \subseteq \WDiv(X)$ and $\chi \colon L \to \kk(X)^*$ be as in Definition~\ref{def:aff-loc-Cox-ring},  with associated Cox space $\varphi \colon X' \to X$. Given an algebraic group action $\mu \colon G \times X \to X$, we say that a finite epimorphism $\varepsilon\colon G' \to G$ and a group action $\mu'\colon G' \times X' \to X'$ \emph{lift} the $G$-action to the spectrum $X'$ of the aff-local Cox ring, if
\begin{enumerate}
    \item the actions of $G'$ and $H':=\Spec \kk[\Cl(X,x)]$ on $X'$ commute,
    \item the epimorphism $\epsilon$ satisfies $\varphi(g'\cdot y) = \varepsilon(g') \cdot \varphi(y)$ for all $g' \in G'$ and $y \in X'$.
\end{enumerate}
}
\end{definition}

\begin{remark}
{\em 
\textcolor{black}{
For an action of a  connected reductive group $G$ on $X$,  from~\cite[Thm 4.2.3.2]{ADHL15} we see there is always a lifting of the $G$-action to $X'$. In fact, the assumption in \emph{loc.~cit.} that $X$ should have only constant invertible functions is only needed in order for the Cox ring to be well defined in that setting.  In particular, the proof uses only the existence of $G$-linearizations of sheaves $\mathcal{A}$ of graded algebras on $X$, cf.~\cite[Def.~4.2.3.3]{ADHL15}, which in turn only requires $X$ to be a pre-variety endowed with the action of a connected algebraic group.}

If $G$ is simply connected, then the epimorphism $\varepsilon$ is just the identity~\cite[Thm 4.2.3.2~(iii)]{ADHL15}. If $G$ is a torus, then $G'=G$ and $\varepsilon$ is of the form $g \mapsto g^b$ for some $ b \in \zz^+$ by \cite[Thm 4.2.3.2~(iv)]{ADHL15}. If $G$ is semisimple, but not simply connected, e.g. a special orthogonal group, then $\varepsilon$ may come from a nontrivial covering group of $G$. In particular, this covering group is semisimple and we can always assume it to be simply connected by the construction in~\cite[Sec.~4.2.3]{ADHL15}.
}
\end{remark}

The following proposition shows that not only can this lifting be iterated so that there is a lifting of $G$ to the iterated Cox space $X^{(n)}$ for any $n \in \nn$, i.e., a finite epimorphism $\varepsilon^{(n)}\colon G^{(n)} \to G$ and an action $\mu^{(n)}\colon G^{(n)} \times X^{(n)}$ such that Items (1) and (2) of Definition~\ref{def:lifting-to-Cox} are fulfilled with $H'$ and $\varphi$ replaced by  $H^{(n)}$ and $\varphi^{(n)}$ from Proposition~\ref{prop:Cox-ring-iteration} (3) respectively, but, in fact, the action of $G^{(n)}$ \emph{commutes} with the action of $S^{(n)}$ and \emph{fixes} $x^{(n)}$ if the action of $G$ fixes $x$. 

\begin{proposition}
\label{prop:lift-iteration-Cox-ring}
Let $X$ be an affine  variety of klt type as in Definition~\ref{def:aff-loc-Cox-ring} and $x\in X$.
Assume that a connected reductive group $G$ acts on $X$.
Then, for every $n \in \nn$, the following hold:
\begin{enumerate}
    \item There is a finite epimorphism $\varepsilon^{(n)}\colon G^{(n)} \to G$ and an action $\mu^{(n)}\colon G^{(n)} \times X^{(n)} \to X^{(n)}$.
    \item The action of $G^{(n)}$ on $X^{(n)}$ commutes with the action of $H^{(n)}=\Spec \kk[\Cl(X^{(n-1)},x^{(n-1)})]$. In particular, there is an algebraic action of $H^{(n)}$ on $Y^{(n)}:=X^{(n)}/\!/G^{(n)}$, and we have an identity
    $$
    Y^{(n)}/\!/H^{(n)} \cong  Y^{(n-1)}.
    $$
    \item The composed map $\psi^{(n)} := \varphi^{(1)} \circ \ldots \circ \varphi^{(n)}$ and the epimorphism $\varepsilon^n \colon G^{(n)} \to G^{(n-1)} $ satisfy 
    $$
    \psi^{(n)}(g\cdot y) = \varepsilon(g) \cdot \psi^{(n)}(y)
    $$
    for all $g \in G^{(n)}$ and $y \in X^{(n)}$.
    \item If the action of $G$ on $X$ fixes $x$, then the action of $G^{(n)}$ on $X^{(n)}$ fixes $x^{(n)}$.
    \item If $G$ is semisimple, we can take $G^{(n)}$ to be the universal cover of $G$ for every $n \in \mathbb{N}$.
    \item We have a commutative diagram of quotients:
    
    $$
 \xymatrix@R=40pt@C=60pt{
    X^{(n)} \ar[r]^{/\!/H^{(n)}} \ar[d]^{/\!/G^{(n)}} \ar@/^3.0pc/[rrrr]^{/\!/S^{(n)}}
    &
    X^{(n-1)} \ar[r]^{/\!/H^{(n-1)}} \ar[d]^{/\!/G^{(n-1)}} 
    &
    \cdots \ar[r]^{/\!/H^{(2)}}
    &
    X^{(1)} \ar[r]^{/\!/H^{(1)}} \ar[d]^{/\!/G^{(1)}}
    &
    X \ar[d]^{/\!/G}
    \\
    Y^{(n)} \ar[r]^{/\!/H^{(n)}}
    &
    Y^{(n-1)} \ar[r]^{/\!/H^{(n-1)}}
    &
    \cdots
    \ar[r]^{/\!/H^{(2)}}
    &
    Y^{(1)} \ar[r]^-{/\!/H^{(1)}}
    &
    Y^{(0)} \cong X /\!/G \;\;\;\;\;\;\;\;\;
    }
$$

\end{enumerate}
\end{proposition}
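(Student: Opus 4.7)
The plan is to prove the proposition by induction on $n$, applying at each step the classical lifting theorem for connected reductive group actions on Cox spaces~\cite[Thm.~4.2.3.2]{ADHL15}, in the generalized form of~\cite[Sec.~4.1]{BM21} that accommodates possibly nonconstant invertible global functions on $X^{(n)}$.

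For the base case $n=1$, the cited lifting theorem directly produces the finite epimorphism $\varepsilon^{(1)}\colon G^{(1)} \to G$ and the action $\mu^{(1)}\colon G^{(1)} \times X^{(1)} \to X^{(1)}$, which by construction commutes with the action of $H^{(1)}=\Spec \kk[\Cl(X,x)]$; this establishes Item~(1) and the commutation claim in Item~(2). The commutation implies that $H^{(1)}$ descends to an action on $Y^{(1)} := X^{(1)}/\!/G^{(1)}$, and interchanging the order of the two commuting good quotients yields $Y^{(1)}/\!/H^{(1)} \cong X /\!/ G = Y^{(0)}$. Item~(3) for $n=1$ is Condition~(2) of Definition~\ref{def:lifting-to-Cox}. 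For Item~(4), $G$-invariance of $x$ combined with that same compatibility shows that $(\varphi^{(1)})^{-1}(x)$ is $G^{(1)}$-stable; since the $G^{(1)}$- and $H^{(1)}$-actions commute, $G^{(1)}$ sends $H^{(1)}$-fixed closed points to $H^{(1)}$-fixed closed points, and by Proposition~\ref{prop:aff-loc-Cox-ring}(4) there is a unique such point in the fiber, namely $x^{(1)}$, so $x^{(1)}$ is $G^{(1)}$-fixed. For Item~(5), the explicit construction in~\cite[Sec.~4.2.3]{ADHL15} permits us to replace any covering group of a semisimple $G$ by its universal cover.

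For the inductive step, assume the lifting to $X^{(n-1)}$ has been constructed with all claimed properties. By Proposition~\ref{prop:Cox-ring-iteration}(1) and the inductive hypothesis, $X^{(n-1)}$ is an affine variety of klt type with a closed point $x^{(n-1)}$ fixed by the connected reductive group $G^{(n-1)}$, so we may apply the lifting theorem anew to $(X^{(n-1)}, x^{(n-1)})$ equipped with the $G^{(n-1)}$-action. This yields a finite epimorphism $G^{(n)} \to G^{(n-1)}$ and an action of $G^{(n)}$ on $X^{(n)}$; composition with $\varepsilon^{(n-1)}$ provides the finite epimorphism to $G$ demanded by Item~(1). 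Items~(2)--(5) at level $n$ then follow by repeating the base-case arguments one level up, and Item~(6) is the assembly of the resulting $n$ squares into a single commutative diagram of good quotients, with the solvable reductive group $S^{(n)}$ identified via Proposition~\ref{prop:Cox-ring-iteration}(4).

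The main technical obstacle lies in maintaining consistency of the finite epimorphisms across levels, and in particular in verifying that in the semisimple case no enlargement beyond the universal cover of $G$ is ever needed at deeper iterations: at each application of the lifting theorem to a semisimple group, one must argue that the resulting cover factors through the one already fixed at the previous step. Once this book-keeping is arranged, the fixed-point property in Item~(4) follows mechanically at each step from the uniqueness statement of Proposition~\ref{prop:aff-loc-Cox-ring}(4) together with the commutation of the $G^{(n)}$- and $H^{(n)}$-actions.
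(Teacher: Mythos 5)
Your proposal takes essentially the same approach as the paper: both apply the lifting theorem \cite[Thm.~4.2.3.2]{ADHL15} (in the variant of \cite{BM21} that drops the hypothesis on invertible global functions) recursively along the chain of quotient presentations $X^{(n)} \to X^{(n-1)}$, deduce the commutation with $H^{(n)}$ from that theorem, prove Item~(4) exactly as you do — via uniqueness of the $H^{(n)}$-fixed point in the fiber over $x^{(n-1)}$ and the fact that $G^{(n)}$ centralizes $H^{(n)}$ — and assemble Item~(6) from the previous items. The one loose end you flag as a ``main technical obstacle,'' namely consistency of the finite covers in the semisimple case, is in fact resolved automatically by \cite[Thm.~4.2.3.2(iii), Lem.~4.2.3.9]{ADHL15}: once one lifts to the universal cover $\widetilde{G}$ at the first step, $\widetilde{G}$ is simply connected, so all subsequent lifts are via the identity and $G^{(n)}=\widetilde{G}$ for every $n$; no further bookkeeping is needed.
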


\begin{proof}
We start by observing that $X^{(n)} \to X^{(n-1)}$ fulfills the requirements for a \emph{quotient presentation} in the sense of~\cite[Def.~4.2.1.1]{ADHL15}, except possibly for the assumption on  invertible global regular functions. But as we explained above, this is not needed in the proof of~\cite[Thm.~4.2.3.2]{ADHL15}. So, the first and the third item of the proposition follow directly by recursively applying~\cite[Thm.~4.2.3.2]{ADHL15}, yielding a chain of (possibly trivial) covers
$$
G^{(n)} \xrightarrow{\varepsilon^{n}} G^{(n-1)} \xrightarrow{\varepsilon^{n-1}} \cdots \xrightarrow{\varepsilon^{2}} G^{(1)} \xrightarrow{\varepsilon^{1}} G,
$$
such that $\varepsilon^{(n)}:= \varepsilon^{1} \circ \cdots \circ \varepsilon^{n} \colon G^{(n)} \to G$.

We come to Item (2). Here, the fact that the action of  $G$ on $X^{(n)}$ commutes with the action of $H^{(n)}$ again follows from~\cite[Thm 4.2.3.2]{ADHL15}. 
The action of $H^{(n)}$ on $Y^{(n)}$ is induced by the action of $H^{(n)}$ on the ring $\kk[X^{(n)}]^{G^{(n)}}$ of $G^{(n)}$-invariants. On the other hand, by construction of the lifting, cf.~Definition~\ref{def:lifting-to-Cox}, the induced action of $G^{(n)}$ on the quotient $X^{(n-1)}=X^{(n)}/\!/H^{(n)}$ is via the epimorphism $\epsilon^{(n)}$ from $G^{(n)}$ to $G$, so that the right hand side is equal to $X^{(n-1)}/\!/G^{(n)}$.
This establishes the claim made in the second item.

For Item (4), we note that by Definition~\ref{def:lifting-to-Cox} (ii), for $g \in G^{(n)}$ the image of $g \cdot x^{(n)}$ in $X^{(n-1)}$ is $x^{(n-1)}$. This means that if  $g \cdot x^{(n)} \neq x^{(n)}$, since $x^{(n)}$ is the unique $H^{(n)}$-fixed point in the preimage of $x^{(n-1)}$, there exists an $h \in H^{(n)}$ such that $h\cdot(g \cdot x^{(n)}) \neq g \cdot x^{(n)}=g \cdot (h \cdot x^{(n)})$. This however contradicts the fact that $G^{(n)}$ lies in the centralizer of $H^{(n)}$. So $G^{(n)}$ needs to fix  $x^{(n)}$, which proves Item (4).

Item (5) follows from the construction of the lifting in~\cite[Pf. of Thm.~4.2.3.2, Lem. 4.2.3.9]{ADHL15}.

Item (6) finally follows from recursively applying Item (2).
\end{proof}

\section{Proofs of the main theorems and their corollaries} 
In the first subsection, we essentially deal with the case of semisimple groups, before giving the proof of the main theorems in the second subsection.
\subsection{Factoriality of quotients by semisimple groups}
It is well known that, 
if $A$ is a unique factorization domain
and $G$ is a connected semisimple group acting on it, 
then $A^G$ is a unique factorization domain as well (see, e.g.,~\cite[Thm.~II.3.17]{AGIV}).
To prove our main theorem, 
we will show a local version of the previous statement.
Indeed, we show that the implication holds locally at a point $x$ in $X$, whenever $X$ is factorial at $x$ and $G$ is a connected semisimple group fixing $x$.

\begin{lemma}\label{lem:semisimple-quotient}
\label{le:factorial-semisimple-quotient}
Let $X$ be an affine variety of klt type such that
\begin{enumerate}
    \item a connected semisimple group $G$ acts on $X$,
    \item there is a $G$-fixed closed point $x \in X$,
    \item the Zariski local ring $\mathcal{O}_{X,x}$ is factorial.
\end{enumerate}
Then there is an affine $G$-stable open neighborhood $x \in U \subseteq X$, such that
for the quotient map $\pi\colon X \to X/\!/G$, the image $\pi(U) \subseteq X/\!/G$ is open, affine, locally factorial, and has canonical singularities.
\end{lemma}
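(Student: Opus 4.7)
The strategy has three stages: reduce to a locally factorial setting via Lemma~\ref{lem:shrinking-around-fixed-point}; establish local factoriality of the quotient at $\pi(x)$, exploiting the absence of nontrivial characters for a connected semisimple group; and upgrade to canonical singularities via Boutot's theorem combined with the Elkik--Kempf criterion.

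First, apply Lemma~\ref{lem:shrinking-around-fixed-point}(1) to replace $X$ by a $G$-stable locally factorial principal open affine neighborhood $X_0$ of $x$; the same lemma guarantees that $\pi(X_0)$ is an affine open principal neighborhood of $\pi(x)$ in $X/\!/G$. Since being locally factorial is an open condition (\cite[p.~21]{BF84}), it suffices to exhibit factoriality of the local ring $\mathcal{O}_{X_0/\!/G,\pi(x)}$; a subsequent principal invariant shrinking then yields an everywhere-locally-factorial neighborhood $V \subseteq \pi(X_0)$ of $\pi(x)$, and we set $U := \pi^{-1}(V)$.

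To establish factoriality of $\mathcal{O}_{X_0/\!/G,\pi(x)}$, the plan is to mimic locally the classical result \cite[Thm.~II.3.17]{AGIV}: invariants of a UFD under a connected algebraic group with trivial character group form a UFD. A height-one prime $\mathfrak{p} \subset \mathcal{O}_{X_0/\!/G,\pi(x)}$ pulls back to a $G$-invariant codimension-one subvariety $D \subset X_0$ passing through $x$, which is Cartier by local factoriality. The associated line bundle $\mathcal{O}_{X_0}(D)$ carries a canonical $G$-linearization, and the induced character of $G$ on the fiber over the $G$-fixed point $x$ is trivial since $G$ is connected semisimple. Running the proof of Lemma~\ref{lem:shrinking-around-fixed-point}(2) with this additional input yields a $G$-invariant nonvanishing section $s$ of $\mathcal{O}_{X_0}(D)$ in a principal invariant open neighborhood of $x$, trivializing the line bundle $G$-equivariantly; since the canonical section $s_D$ cutting out $D$ likewise transforms under $G$ via a character (its divisor $D$ being $G$-invariant), it is itself invariant, and the ratio $s_D/s$ is an invariant local equation for $D$ which descends to a generator of $\mathfrak{p}$ in $\mathcal{O}_{X_0/\!/G,\pi(x)}$.

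Finally, canonical singularities of $\pi(U)$ follow from a standard package: since $X$ is of klt type it has rational singularities (\cite{KM98}), and by Boutot's theorem \cite{Bou87} so does $\pi(U)$; rational singularities are Cohen--Macaulay, and local factoriality makes $K_{\pi(U)}$ Cartier, so $\pi(U)$ is rational Gorenstein, and the Elkik--Kempf criterion delivers canonical singularities. I expect the main obstacle to be the character argument in the middle step, specifically its interplay with the possible presence of non-constant invertible global functions on $X_0$: \emph{a priori} the character takes values in $\mathbb{K}[X_0]^*$ rather than $\mathbb{K}^*$, and trivializing it may require a further invariant shrinking of $X_0$ or, alternatively, a direct cocycle computation in the Zariski local ring $\mathcal{O}_{X_0,x}$, where one filters the relevant unit group $1+\mathfrak{m}_x$ by powers of the maximal ideal and applies vanishing of rational group cohomology for the reductive group $G$ acting on each successive $\mathbb{K}$-vector space quotient.
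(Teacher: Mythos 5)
Your proposal follows the same route as the paper: reduce to the locally factorial case via Lemma~\ref{lem:shrinking-around-fixed-point}(1), establish factoriality of $\mathcal{O}_{X/\!/G,\pi(x)}$ by producing a $G$-invariant local equation for the pullback of a prime divisor (using Lemma~\ref{lem:shrinking-around-fixed-point}(2) and the absence of nontrivial characters on a connected semisimple group), shrink to a locally factorial affine neighborhood, and conclude canonicity from Boutot plus rational~Gorenstein $\Rightarrow$ canonical.

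Two small remarks. First, the paper is slightly more careful about the ``pullback'' of a Weil divisor $W$ through $\pi(x)$: since $\pi$ need not be flat or equidimensional, it restricts $W$ to the smooth locus of $X/\!/G$ (where it is Cartier and pulls back honestly) and then takes the closure to obtain a genuine $G$-invariant Weil divisor $W_X$ on $X$; this closure is then Cartier by local factoriality. Your phrase ``pulls back to a $G$-invariant codimension-one subvariety'' elides this but is fixable in the same way. Second, the concern you flag at the end --- that the relevant character may a priori take values in $\mathbb{K}[X_0]^*$ rather than $\mathbb{K}^*$ --- is exactly the subtlety, and the paper dispatches it not by pro-unipotent cohomology or extra shrinking but by invoking a Rosenlicht-type result (\cite[Thm.~II.3.1]{AGIV}): for $G$ connected semisimple acting on affine $X_0$, the $G$-action on $\mathbb{K}[X_0]^*$ is trivial, so $g\cdot h/h$ is both a unit fixed by $G$ and (composing with $\mathbb{K}[X_0]^*\to\mathbb{K}[X_0]^*/\mathbb{K}^*$, which is discrete and torsion-free) actually a $\mathbb{K}^*$-valued character of $G$, hence trivial. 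With that substitution your argument is essentially identical to the paper's.
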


\begin{proof}
First, we note that since $X$ is of klt type, it has rational singularities, and thus, by~\cite{Bou87}, the quotient $X/\!/G$ has rational singularities.
In a second step, by Lemma~\ref{lem:shrinking-around-fixed-point} (i), we can replace $X$ with a locally factorial principal open neighborhood $X_f$ of $x$ and thus assume that $X$ is locally factorial. 

In a third step, we now show that the local ring $\mathcal{O}_{X/\!/G,[x]}$ at the point $  [x]=\pi(x) \in X/\!/G$ is factorial. \textcolor{black}{We apply \cite[Thm.~II.3.1]{AGIV} to see that $G$ transforms each element of $\mathbb{K}[X]^*$ by a character. However, as a connected semisimple group, $G$ has no nontrivial characters, so that the action of $G$ on $\mathbb{K}[X]^*$ is actually trivial.} Let now $W$ be an effective Weil divisor through $[x]$.
 We restrict $W$ to the smooth locus of $X/\!/G$ and  denote by
 $W_X$ the closure in $X$ of the pullback of this restriction.
 By construction, $W_X$ is a $G$-invariant effective Weil divisor.
  Since $X$ is locally factorial by the second step of the proof, $W_X$ is Cartier.

  By Lemma~\ref{lem:shrinking-around-fixed-point} (ii), we obtain a $G$-invariant principal open neighborhood $U'':=X_g$ of $x$, where $W_X$ is principal. Thus we can write $\left.W_X\right|_{U''} = \ddivv(h)$ for some regular function $h$ on $U''$.
  We have already seen above that $G$ acts trivially on $\kk[U'']^*$, so $h$ is $G$-invariant.
   Let $D$ be the principal divisor on $U''/\!/G$ defined by $h$,
 so that  $W_X=\pi^*(D)$ holds on $U''$.
 On the other hand, we have that $W_X=\pi^*(W)$ holds over the smooth locus of
$U''/\!/G$.
We conclude that $\pi^*(D)=\pi^*(W)$ holds on $\pi^{-1}((U''/\!/G)^{\mathrm{sm}})$.
Since the map $\pi$ is surjective, this implies that $D = W$ on
$(U/\!/G)^{\mathrm{sm}}$ (see, e.g.,~\cite[Proposition 5.3]{FM20}).
Since $U''/\!/G$ is normal, this means that $D \sim W$ on $U''/\!/G$. So $W$ is principal on a neighborhood of $[x]$.
\textcolor{black}{Since $W$ was an arbitrary effective Weil divisor through $[x]$, it follows that the local ring $\mathcal{O}_{X/\!/G,[x]}$ is factorial.}

By the same argument as in the second step of the proof, we can find an open affine locally factorial neighborhood $\hat{U}$ of $[x]$. The preimage  $U:=\pi^{-1}(\hat{U})$ will likewise be affine. Moreover,  since $\hat{U}$ has rational singularities as noted at the beginning of the proof, and since local factoriality implies in particular that $K_{\hat U}$ is Cartier, $\hat{U} = \pi(U)$ has canonical singularities by \cite[Cor.~5.24]{KM98}. So all claims are proven.
\end{proof}

\subsection{Proof of the main results}\label{subsect:proof_of_main_results}

As a first step, we will address the case of torus actions and generalize Proposition~\ref{prop:torus-quotient-klt} from klt to klt type singularities.

\begin{proposition}\label{prop:torus-quotient}
Let $X$ be an affine $\mathbb{T}$-variety.
Let $x\in X$ be a fixed point for the torus action.
Assume that $X$ is of klt type. 
Then, the quotient $X/\!/\mathbb{T}$ contains an open affine subset $[x] \in U \subseteq X/\!/\mathbb{T}$ that contains the image $[x]$ of $x$ and is of klt type.
\end{proposition}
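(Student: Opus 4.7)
The strategy is to transfer the statement, through the machinery of iterated Cox rings developed in Section~\ref{sec:iteration}, to a setting in which the underlying variety becomes factorial at the distinguished point and hence \emph{klt} (rather than merely klt type), where the previously established Proposition~\ref{prop:torus-quotient-klt} applies; and then to descend the klt type property back to $X/\!/\mathbb{T}$ via a further finite quotient. In this way, the klt type case is reduced to the klt case without any circular appeal to itself.

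Concretely, the plan is to invoke Proposition~\ref{prop:Cox-ring-iteration} to produce a finite iteration
\[
(X^{(n_0)}, x^{(n_0)}) \to \cdots \to (X^{(1)}, x^{(1)}) \to (X, x),
\]
terminating in an affine variety $X^{(n_0)}$ that is of klt type and factorial at $x^{(n_0)}$, with composed map $\psi\colon X^{(n_0)} \to X$ a GIT-quotient by a solvable reductive group $S := S^{(n_0)}$. Next, Proposition~\ref{prop:lift-iteration-Cox-ring} will lift the torus $\mathbb{T}$-action on $X$, up to a finite isogeny $\varepsilon \colon \mathbb{T}' \to \mathbb{T}$, to a $\mathbb{T}'$-action on $X^{(n_0)}$ that commutes with $S$ and fixes $x^{(n_0)}$. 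The combined subgroup $G' := \mathbb{T}' \cdot S$ of $\Aut(X^{(n_0)})$ is then reductive, and since $\mathbb{T}'$ descends through $\varepsilon$ to $\mathbb{T}$ on $X$, we obtain an isomorphism $X^{(n_0)} /\!/ G' \cong X/\!/ \mathbb{T}$ identifying the images of $x^{(n_0)}$ and $x$.

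Applying Lemma~\ref{lem:shrinking-around-fixed-point}(1) to the fixed point $x^{(n_0)}$ of the reductive group $G'$, I would shrink $X^{(n_0)}$ to a $G'$-stable, locally factorial, principal open affine neighborhood of $x^{(n_0)}$. Being factorial and of klt type, $X^{(n_0)}$ is then klt: for any boundary $\Delta$ with $(X^{(n_0)}, \Delta)$ klt and any prime divisor $E$ over $X^{(n_0)}$, we have $a_E(X^{(n_0)}, 0) \geq a_E(X^{(n_0)}, \Delta) > 0$ since $K_{X^{(n_0)}}$ is $\qq$-Cartier and $\Delta$ is effective. Now I would decompose $G' = \mathbb{T}_{\mathrm{big}} \cdot F$, where $\mathbb{T}_{\mathrm{big}} := \mathbb{T}' \cdot S^0$ is a torus (it is connected, abelian, and reductive, $S^0$ being a torus as $S$ is solvable reductive) and $F := G'/\mathbb{T}_{\mathrm{big}}$ is a finite group; both factors fix $x^{(n_0)}$. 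Proposition~\ref{prop:torus-quotient-klt} applied to the klt $\mathbb{T}_{\mathrm{big}}$-variety $X^{(n_0)}$ then produces an open affine neighborhood $U_0$ of $[x^{(n_0)}]$ in $X^{(n_0)} /\!/ \mathbb{T}_{\mathrm{big}}$ that is of klt type. Since $F$ fixes the image of $x^{(n_0)}$, standard averaging as in Lemma~\ref{lem:shrinking-around-fixed-point} allows me to shrink $U_0$ to an $F$-stable open affine neighborhood, after which Proposition~\ref{prop:finite-quotient} yields an open affine klt type neighborhood of $[x]$ in $U_0 /\!/ F \subseteq X^{(n_0)}/\!/G' \cong X/\!/\mathbb{T}$, as desired.

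The main obstacle is the bookkeeping needed to verify that $x^{(n_0)}$ is fixed not only by the lifted torus $\mathbb{T}'$ (Proposition~\ref{prop:lift-iteration-Cox-ring}(4)) but also by the entire solvable group $S$, which follows from the uniqueness of the distinguished invariant point in each fiber along the Cox ring iteration (Proposition~\ref{prop:aff-loc-Cox-ring}(4)), and that the GIT-quotient by $G'$ genuinely recovers $X/\!/\mathbb{T}$ on a neighborhood of $[x]$. Beyond these formal checks, the argument reduces to the elementary observation that factorial plus klt type equals klt, together with the torus and finite klt quotient results already at our disposal.
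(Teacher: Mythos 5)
Your proposal is correct and follows essentially the same strategy as the paper's proof: lift the $\mathbb{T}$-action through the iteration of aff-local Cox rings to a factorial model where klt type upgrades to klt, then quotient by the identity component of the enlarged solvable reductive group (a torus) via Proposition~\ref{prop:torus-quotient-klt}, and finish with the finite quotient Proposition~\ref{prop:finite-quotient}. The paper's $S_{\mathbb{T}}^{(n)}$ with identity component $\mathbb{T}'$ and component group $S_{\mathbb{T}}^{\mathrm{fin}}$ corresponds exactly to your $G' = \mathbb{T}'\cdot S$ split as $\mathbb{T}_{\mathrm{big}}\cdot F$.
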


\begin{proof}
By Proposition~\ref{prop:lift-iteration-Cox-ring}, we can lift the action of $\mathbb{T}$ to the iteration of aff-local Cox rings of $X$ at $x$. Analogous to the construction of the solvable group $S^{(n)}$ appearing in part (4) of Proposition~\ref{prop:Cox-ring-iteration} as an iterated extension of the lifted characteristic quasitori $H^{(n)}$,   we can in fact construct a solvable group $S_{\mathbb{T}}^{(n)}$ as an iterated extension of $\mathbb{T}$ by $H^{(1)}$, \ldots, $H^{(n)}$  that acts on $X^{(n)}$ in such a way that
\begin{equation}\label{eq:solvablerepresentation}
X^{(n)} /\!/S_{\mathbb{T}}^{(n)} \cong \bigl(\bigl(\bigl(\bigl(X^{(n)} /\!/H^{(n)}\bigl)/\!/ H^{(n-1)}\bigr) \cdots \bigr)/\!/H^{(1)} \bigr) /\!/\mathbb{T} \cong X /\!/\mathbb{T};
\end{equation}
see~\cite[Pf.~of Thm.~1.6]{ABHW18} and the diagram of Proposition~\ref{prop:lift-iteration-Cox-ring}. 
The identity component of $S_{\mathbb{T}}^{(n)}$ is an algebraic torus $\mathbb{T}'$ and the group of components is a finite solvable group $S_{\mathbb{T}}^{\mathrm{fin}}$. From  \eqref{eq:solvablerepresentation} we get an isomorphism
$$
X /\!/\mathbb{T} \cong  \bigl(X^{(n)} /\!/\mathbb{T}' \bigr)  /\!/ S_{\mathbb{T}}^{\mathrm{fin}}.
$$
Now, since $\mathcal{O}_{X^{(n)},x^{(n)}}$ is factorial, by Lemma~\ref{lem:shrinking-around-fixed-point}, without loss of generality we may assume that $X^{(n)}=X^{(n)}_g$ is locally factorial, and thus in particular canonical Gorenstein. As a consequence, we can apply Proposition~\ref{prop:torus-quotient-klt} in order to see that $X^{(n)} /\!/\mathbb{T}'$ is of klt type. Then, by Proposition~\ref{prop:finite-quotient}, also the finite quotient by $S_{\mathbb{T}}^{\mathrm{fin}}$ is of klt type and the claim follows.
\end{proof}

\begin{proof}
[Proof of Theorem~\ref{introtm:reductive-quot}]
Let $[x] \in X/\!/G$, where $x \in \pi^{-1}([x])$ is chosen such that the $G$-orbit $G.x$ is closed in $X$. Since the property of being of klt type is an \'etale local property by Proposition~\ref{prop:klt-type-etale}, it suffices to prove that there is an \'etale neighborhood $V \to  X/\!/G$ of $[x]$ of klt type. 
 As $G.x$ is closed, the stabilizer subgroup $G_x  \subseteq G$ is reductive \textcolor{black}{e.g.~by \cite[Prop.~in Sect.~2]{Lun73}}. Furthermore, Luna's \'etale slice theorem tells us that there is a $G_x$-invariant locally closed affine subvariety $W \subseteq X$ containing $x$ such that the natural map $\varphi: G \times_{G_x} W \to X$ is \'etale, maps onto a $\pi$-saturated open affine subspace of $X$ and is such that the induced map of quotients realizes $W/\!/G_x$ as an \'etale neighborhood $U \to X/\!/G$ of $[x]$, see e.g.~\cite[Sec.~1.1]{Dre04} or the original source \cite{Lun73}. As $\varphi$ is \'etale, the tube $G \times_{G_x} W$ is of klt type; the boundary divisor $\Delta$ can be chosen to be a pullback from $X$. The natural map $q: G\times_{G_x} W \to G/G_x$ realizes the tube as a fibre bundle over the (affine) orbit $G/G_x$ with typical fibre $W$ and structure group $G_x$. As it follows from \cite[Lem.~5.17]{KM98} that for a general point $p \in G/G_x$ the pair $(q^{-1}(p), \Delta|_{q^{-1}(p)})$ is klt, we may therefore assume from now on that $X$ is affine and that the reductive group $G$ fixes $x \in X$.

We denote by $G^0$ the connected component of the identity $e \in G$. The group $G^0$ is a normal connected reductive subgroup of $G$. We further denote by $F:=G/G^0$ the finite group of components of $G$. The derived subgroup $(G^0)^{\mathrm{ss}}$ is a connected semisimple group, and the quotient $T:=G^0/(G^0)^{\mathrm{ss}}$ is an algebraic torus. Note that since the derived subgroup $(G^0)^{\mathrm{ss}}$ is a \emph{characteristic subgroup} of $G^0$, the former is even normal in $G$, see e.g.~\cite[1.5.6.~(iii)]{Rob93}. By applying~\cite[Lem.~4.2.1.3]{ADHL15} two times (note that connectedness of the involved groups is not necessary for the proof provided there),  we have the following commutative diagram of quotient morphisms
$$
 \xymatrix{
    X \ar[r] \ar@/_1.6pc/[rrr]
    & X/\!/(G^0)^{\mathrm{ss}} \ar[r] 
    & \left(X/\!/(G^0)^{\mathrm{ss}}\right)/\!/T \ar[r]
    & \left(\left(X/\!/(G^0)^{\mathrm{ss}}\right)/\!/T \right)/\!/F = X/\!/G.\\ ~ 
    }\vspace{-0.5cm}
$$
Thus, after applying Proposition~\ref{prop:torus-quotient} (the torus case) and Proposition~\ref{prop:finite-quotient} (the case of finite groups) we are left to deal with the case that $G$ is semisimple and acts on an affine variety $X$ with a fixed point $x$. By Proposition~\ref{prop:Cox-ring-iteration}, we can choose $n \in \mathbb{N}$, such that the spectrum $X^{(n)}$ of the iterated aff-local Cox ring is factorial at the unique $H^{(n)}$-fixed point $x^{(n)} \in X^{(n)}$. By part (4) of Proposition~\ref{prop:lift-iteration-Cox-ring}, the action of the semisimple covering group $G^{(n)} \to G$ fixes $x^{(n)}$ as well. Let $y^{(n)}$ be the image of $x^{(n)}$ in $Y^{(n)}:=X^{(n)}/\!/G^{(n)}$. Then, by Lemma~\ref{le:factorial-semisimple-quotient} there is an affine open locally factorial neighborhood $y^{(n)} \in U^{(n)} \subseteq X^{(n)}/\!/G^{(n)}$ with canonical singularities. By the same considerations as in the proof of Lemma~\ref{le:factorial-semisimple-quotient}, we can assume that $U^{(n)}$ is a principal open affine of $Y^{(n)}$ and $H^{(n)}$-stable for the induced action of $H^{(n)}$ on $Y^{(n)}$. But then another application of Proposition~\ref{prop:torus-quotient} implies that $U^{(n-1)}:=U^{(n)}/\!/H^{(n)}$ is an open subset of klt type of $Y^{(n-1)}$ containing the image $y^{(n-1)}$ of $x^{(n-1)}$. Applying this procedure $n$ times, from Item (6) of Proposition~\ref{prop:lift-iteration-Cox-ring} we obtain an open subset of klt type $U^{(0)} \subseteq Y^{(0)}=X/\!/G$ containing the image $[x]$ of $x$. This concludes the proof of Theorem~\ref{introtm:reductive-quot}.
\end{proof}

\begin{proof}
[Proof of Corollary~\ref{introcor:reduct-quot}]
By definition, a reductive quotient singularity has an \'etale neighborhood that is isomorphic to an \'etale neighborhood of $[0] \in V/\!/G $ for some finite-dimensional complex representation $V$ of a reductive group $G$. The latter one has singularities of klt type by Theorem~\ref{introtm:reductive-quot}, as $V$ is smooth and hence of klt type. Proposition~\ref{prop:klt-type-etale} therefore yields the claim. 
\end{proof}

\begin{remark}{\em
Instead of appealing to Theorem~\ref{introtm:reductive-quot} we could also prove the main statement of Corollary~\ref{introcor:reduct-quot}, namely the fact that $V/\!/G$ is of klt type, avoiding iteration of Cox rings as follows. First, as in the proof of Theorem~\ref{introtm:reductive-quot} above, we write $V/\!/G \cong ((V/\!/(G^0)^{ss})/\!/\mathbb{T})/\!/F$. By the classical observation \cite[Thm.~II.3.17]{AGIV}, the affine variety $V/\!/(G^0)^{ss}$ is factorial, hence rational Gorenstein, and in particular canonical. Then, Propositions~\ref{prop:torus-quotient-klt} and \ref{prop:finite-quotient} yield the claim.}
\end{remark}

\begin{proof}
[Proof of Theorem~\ref{introthm:proj-quot}]
Let $\pi\colon X\rightarrow X/\!/G$ be the good quotient.
A good quotient is an affine morphism, so we can choose an affine covering $\{U_i\}_{i\in I}$ so that 
$\pi^{-1}(U_i) \rightarrow U_i$ are good quotients and
$\{\pi_1^{-1}(U_i)\}_{i\in I}$ give an affine covering of $X$.
Since $X$ is of klt type, 
then each of the $\pi^{-1}(U_i)$ is of klt type as well.
By Theorem~\ref{introcor:reduct-quot}, we conclude that $U_i$ is of klt type for each $i\in I$. Then, $X/\!/G$ is of klt type, as for quasi-projective varieties the property of being of klt type is Zariski-local (see the first paragraph of the proof of Proposition~\ref{prop:klt-type-etale}). 
\end{proof}

\begin{proof}[Proof of Corollary~\ref{introcor:GITquotients}]
Since GIT quotients are in particular good quotients with quasi-projective quotient space \cite[Thm.~1.10]{MFK94}, Theorem~\ref{introthm:proj-quot} applies to prove this claim.
\end{proof}

\section{Proofs of further applications of the main results}\label{sect:applicationsproof}

In this section, we give some further applications
of the main theorem to symplectic quotients, 
good moduli spaces, Mori Dream spaces, and collapsing of homogeneous bundles.

\subsection{Application to symplectic quotients}
In this subsection, we prove Theorem~\ref{introthm:Hamiltonian}, which is an application to symplectic quotients of K\"ahler manifolds.

\begin{proof}[Proof of Theorem~\ref{introthm:Hamiltonian}]
Let us quickly summarize the quotient theory for Hamiltonian actions on K\"ahler manifolds, which is analogous to Geometric Invariant Theory in the algebraic setup; e.g.~see \cite{Sja95, HH99}.  Let $X^{ss}_\mu = \{x \in X \mid \overline{G.x} \cap \mu^{-1}(0) \neq \emptyset\}$ be the open subset of $\mu$-semistable points in $X$. Then, the complex structure on the Hausdorff topological space $\mu^{-1}(0)/K$ is constructed in such a way that there is a holomorphic map $\pi: X^{ss}_\mu \to \mu^{-1}(0)/K$ with $(\pi_*\mathcal{O}_{X^{ss}_\mu})^G = \mathcal{O}_{\mu^{-1}(0)/K}$ having the property that every $\pi$-fibre contains a unique $G$-orbit that is closed in $X^{ss}_\mu$.

Let now $p \in \mu^{-1}(0)/K$ and let $x \in \pi^{-1}(p)$ have closed orbit. Then, the stabilizer $G_x$ is reductive, and there is a holomorphic slice at $x$. I.e., there exists an open Stein neighborhood $U$ of $p$ in $\mu^{-1}(0)/K$ such that $\pi^{-1}(U)$ is Stein and $G$-equivariantly biholomorphic to $G \times_{G_x} W$, where $W$ is an open, Stein, $G_x$-invariant subset of the (algebraic) $G$-representation $V:=T_xX/T_x(G.x)$ that is saturated with respect to the invariant-theoretic quotient morphism $V \to V /\!/G_x $; see \cite[(2.7) Theorem]{HL94} or \cite[Thm.~1.12]{Sja95}. It follows that $U$ is bihomolorphic to the Euclidean open image of $W$ in $V /\!/G_x$. Since the latter variety is affine and of klt type by Theorem~\ref{introtm:reductive-quot}, this concludes the proof. \end{proof}

\subsection{Applications to good moduli spaces}
In this subsection, we prove some applications to the theory of good moduli spaces.

\begin{proof}[Proof of Theorem~\ref{thm:K-moduli}]
\textcolor{black}{
Let $\overline{\mathcal{X}}_{n,v}$ be the Artin stack of $\mathbb{Q}$-Gorenstein smoothable K-semistable $\mathbb{Q}$-Fano varieties of dimension $n$ and volume $v$. As these form a bounded family, there exists a locally closed subscheme $\overline{P} \subset \mathrm{Hilb}_\chi(\mathbb{P}^N)$ invariant under the natural action of $G:=\mathrm{SL}_{N+1}$ such that $\overline{\mathcal{X}}_{n,v} \cong [\overline{P}/G]$ and such that the locus $P$ parametrizing smooth $K$-semistable Fano varieties is $G$-stable, open, and dense in $\overline{P}$, see \cite[Sect.~6.3]{Xu20}. The main result of \cite{LWX19} implies that the good quotient $\pi: \overline{\mathcal{X}}_{n,v} \to \overline{X}_{n,v} = \overline{P} /\!/G$ exists as a separated scheme of finite type; see also the discussion in \cite[Sect.~7.2]{Xu20}. The closed points of $\overline{X}_{n,v}$ parametrize the isomorphism classes of $n$-dimensional $\mathbb{Q}$-Gorenstein smoothable K-polystable $\mathbb{Q}$-Fano varieties of volume $v$; these correspond to the closed $G$-orbits in $\overline{P}$.  The moduli space $X_{n,v}$ of K-polystable smooth Fano varieties is the open subset of $\overline{P} /\!/G$ consisting of those points for which the corresponding closed orbit lies in $P$, i.e., where the K-polystable representative is a \emph{smooth} Fano variety. As smooth Fano varieties have unobstructed deformations by Nakano vanishing, Serre duality, and Kodaira-Spencer theory \cite[Thm.~5.6]{KodairaBook} and as K-semistability is an open condition e.g. by \cite[Thm.~6.8]{Xu20}, the preimage $P:=\pi^{-1}(X_{n,v})$ is smooth; moreover, clearly $X_{n,v} = P/\!/G$. From~\cite{LWX18}, discussed also in \cite[Sect.~8]{Xu20}, we know that ${X}_{n,v}$ is quasi-projective. Hence, we may apply  Theorem~\ref{introthm:proj-quot}  to each connected component of $P$ and to the corresponding connected component of $X_{n,v}$ to find an effective divisor $B_{n,v}$ on $X_{n,v}$ such that the pair $(X_{n,v},B_{n,v})$ has klt singularities. }
\end{proof}

\begin{remark}\label{rem:badboundary}{\em
We recall that even if the open locus parametrizing smooth K-polystable Fano varieties
has klt type singularities, the proper moduli space parametrizing K-polystable $\qq$-Fanos, e.g., see \cite{LXZ21}, may have worse singularities~\cite{MGS21, Pet21a, Pet21b}.}
\end{remark}

\begin{proof}[Proof of Theorem~\ref{introthm:stacksmoduli}]
Again, by Proposition~\ref{prop:klt-type-etale} it suffices to show that every $p \in X$ has an \'etale neighbourhood of klt type. For this, we proceed along the lines of \cite[Proof of Thm.~4.12]{AHR20}. If $x \in \mathcal{X}(k)$ is the unique point lying over $p$, see \cite[Prop.~9.1]{Alp13}, the stabilizer $G_x$ of $x$ is (linearly) reductive by \cite[Prop.~12.14]{Alp13}. Moreover, as $\mathcal{X}$ is smooth, \cite[Thm.~1.2]{AHR20} guarantees that there exists a smooth affine $G_x$-scheme $W$ containing a $G_x$-fixed point $w \in W$ together with an affine, étale morphism $f: ([W/G_x], w) \to (\mathcal{X},x)$ that induces an isomorphism of stabilizer groups at $w$. After shrinking, Luna’s fundamental lemma for stacks, \cite[Prop.~4.13]{AHR20} then yields an \'etale morphism of good moduli spaces $(W/\!/G_x, [w]) \to (X, p)$. By Theorem~\ref{introtm:reductive-quot}, the invariant-theoretic quotient $W/\!/G_x$ is of klt type; so, we are done.
\end{proof}

\subsection{Applications to Mori Dream Spaces}
In this subsection, we prove that certain good quotients of Mori Dream Spaces are again Mori Dream Spaces.
As a consequence, we prove that moduli spaces of quiver representations are of Fano type.

\textcolor{black}{
\begin{lemma}
\label{le:kltcoxrings}
    Let $X$ be a Mori Dream Space. Choose a finitely generated group $K\subseteq \WDiv(X)$ of Weil divisors, a subgroup $L \subseteq K$ yielding either $K/L \cong \Cl(X)$ or $K/L \cong \Cl(X)/\Cl(X)_{\mathrm{tors}}$, and a character $\chi\colon L \to \kk(X)^*$. If the Cox ring $R_{X,K,L,\chi}$ is of klt type, then also for any other choice of $K', L'$ and $\chi'$ with $K'/L' \cong \Cl(X)$ or $ K'/L' \cong \Cl(X)/\Cl(X)_{\mathrm{tors}}$, 
    the respective Cox ring $R_{X,K',L',\chi'}$ is of klt type.
\end{lemma}}
\textcolor{black}{\begin{proof}
Let $R_{X,K,L,\chi}$ be the Cox ring with respect to a finitely generated group $K\subseteq \WDiv(X)$ of Weil divisors, a subgroup $L \subseteq K$ yielding $K/L \cong \Cl(X)$ and a character $\chi\colon L \to \kk(X)^*$, cf.~Construction~\ref{constr:CoxSheaf}.  Let $K' \subseteq K$ be a subgroup mapping isomorphically to the free part $\Cl(X)/\Cl(X)_{\mathrm{tors}}$, and denote the corresponding ring of sections by $R_{X, K'}$. As a consequence of \cite[Pf.~of~Lem.~2.21]{GOST15}, the natural inclusion $R_{X,K'} \hookrightarrow R_{X,K,L,\chi}$ yields a finite Galois morphism that is \'etale in codimension one. Hence, using Proposition~\ref{prop:finite-quotient} and \cite[Prop.~5.20]{KM98}, we infer that one of these spectra is of klt type if and only if the other is so.  Now let $K'$ and $K''$ be different groups of Weil divisors mapping isomorphically to $\Cl(X)/\Cl(X)_{\mathrm{tors}}$.
For any $m \in \mathbb{Z}$, the natural inclusion $R_{X,mK'} \hookrightarrow R_{X,K'}$ yields a finite Galois morphism, \'etale in codimension one, as above. So the spectrum of $R_{X,mK'}$ is of klt type if and only if the spectrum of $R_{X,K'}$ is so. Thus, replacing $K'$ and $K''$ by appropriate multiples $mK'$ and $mK''$ respectively, we can assume that the respective generators $D'_i$ and $D''_i$ differ only by a principal divisor $\ddivv(f_i)$. Then~\cite[Pf.~of~Constr.~1.4.1.1]{ADHL15} shows that $R_{X,mK'}$ and $R_{X,mK''}$ are isomorphic. In summary, if one Cox ring is klt, all Cox rings are klt, as claimed.
\end{proof}
}

\begin{proof}[Proof of Theorem~\ref{introthm:MDSquotients}]

By Lemma~\ref{le:kltcoxrings}, we know that even if $X$ allows nontrivial global invertible functions and has hence several non-isomorphic Cox rings, all (spectra of) Cox rings ---no matter if taken with respect to the free part of the class group (as in~\cite{GOST15}) or with respect to the whole class group--- are klt if and only if one (spectrum of a) Cox ring is klt.

Next, we note that the proof of~\cite[Thm.~1.2]{Bae11} does not rely on the property of having only constant invertible global functions, so it yields that $\kk[U]$ and all Cox rings over $U$ are finitely generated. By~\cite[Lem.~3.1]{Bae11}, the class group $\Cl(U/\!/G)$ is finitely generated, and since $U/\!/G$ is normal, we may additionally assume that it is smooth when analyzing (the singularities of) its Cox ring(s). Consider a group $K$ of Weil divisors on $U/\!/G$ mapping isomorphically to the free part of $\Cl(U/\!/G)$. By the above considerations, it suffices to show that $R_{U/\!/G,K}$ is finitely generated and klt. The pullback group $K':=\pi^*K$ is a group of invariant Weil divisors on $U$. Thus, there is a canonical $G$-linearization of the corresponding divisorial ring $R_{U,K'}$, and the proof of~\cite[Thm.~1.1]{Bae11} shows that indeed
\[
(R_{U,K'})^G \cong R_{U/\!/G,K}.
\]
Thus, by a first application of Theorem~\ref{introtm:reductive-quot}, we are left with showing that $\Spec R_{U,K'}$ is klt. 

For this last step, we follow the proof of~\cite[Thm.~1.2]{Bae11} and let $K^0 \subseteq K'$ be the subgroup consisting of principal divisors. By the above considerations, up to a finite quasi-\'etale covering, we can assume that $K'=K^0 \oplus K^1$ splits as a direct sum, where $K^1$ is a subgroup of a group $K^2$ of Weil divisors mapping surjectively onto $\Cl(U)$. If $\mathrm{rk}(K^0)=r$, then~\cite[Rem.~1.3.1.4]{ADHL15} shows that
$$
R_{U,K'} \cong R_{U,K^1} \otimes \kk[x_1^{\pm 1},\ldots,x_r^{\pm 1}],
$$
so $\Spec R_{U,K'}$ is of klt type if and only if $\Spec R_{U,K^1}$ is so. The latter variety is a torus quotient of $\Spec R_{U,K^2}$, the spectrum of a "full" Cox ring. This Cox ring in turn is (at most) a localization of a Cox ring of $X$, compare~\cite[Pf.~of~Thm.~1.2]{Bae11}, so in particular, it is klt by our assumption. Therefore, Theorem~\ref{introtm:reductive-quot} applies a second time to show that the torus quotient $\Spec R_{U,K^1}$ is of klt type. This concludes the proof.
\end{proof}

\begin{proof}[Proof of Corollary~\ref{introcor:Fanoquotients}]
Since $X$ is proper and $L$ is ample, the GIT-quotient $X_L^{ss}/\!/G$ is projective, see, e.g.,~\cite[p.~40]{MFK94}. Thus, by Theorem~\ref{introthm:MDSquotients}, it is a projective MDS with klt Cox ring. So~\cite[Cor.~5.3]{GOST15} applies to prove the claim that $X_L^{ss}/\!/G$ is of Fano type.
\end{proof}

\begin{remark}\label{rem:ChowQuotients}{\em
It is important that we are considering GIT-quotients in the above discussion. For example, the ``normalized Chow quotient" of $(\mathbb{P}^1)^n$ by the natural $\mathrm{GL}_2$-action is isomorphic to the moduli space $\overline{M}_{0,n}$ of stable $n$-pointed curves of genus $0$. At the same time, this moduli space can also be realized as a normalized Chow quotient of the Grassmannian $\mathrm{Gr}(2, n)$ by a torus action, see \cite[Thm.~4.1.8]{Kap93}. However,  $\overline{M}_{0,n}$ is not of Fano type for $n\geq 7$.  Indeed, by \cite[Thm.~1.3(1) and Lem.~3.5]{KMcK13} the anti-canonical divisors of the manifolds in this range do not lie inside the interior of the respective effective cone; i.e., $-K$ is not big. In fact, it was shown in \cite{CT15} that $\overline{M}_{0,n}$ is not even a Mori Dream Space for $n \geq 134$ (a bound which has since been lowered to $n \geq 10$, see~\cite{HKL18}).}
\end{remark}

\begin{proof}[Proof of Corollary~\ref{introcor:quivers}] We use results summarized for example in \cite[Sect.~10.1]{Kir16}. If the quiver has vertex set $I=\{1, \dots, k\}$ and arrow set $A$, and for each arrow $a\in A$ we denote by $s(a)$ its source and by $t(a)$ its target, then we define the \emph{representation space} to be \[R := \bigoplus_{a \in A} \mathrm{Hom}_\mathbb{K}(\mathbb{K}^{n_{s(a)}}, \mathbb{K}^{n_{t(a)}}).\] The $\mathbb{K}$-vector space $R$ carries a natural $G:=\mathrm{GL}_{n_1} \times \cdots \times \mathrm{GL}_{n_k}$-representation by pre- and post-composition that factors through the projectivization $PG$. As there are no oriented cycles, the representation does not have any invariants and the invariant-theoretic quotient $R /\!/ PG$ (parametrizing semisimple representations of $\mathcal{Q}$) is a point. The character $\theta$ defines a linearization of the trivial line bundle over $R$ and the moduli space under discussion 
\[M^{ss}_\mathcal{Q}(\mathbf{n}, \theta) = R^{ss}_{\theta} /\!/ PG \] admits a natural projective morphism to $R /\!/ PG = \{pt.\}$ and is therefore projective.  Let $\mathbb{P} = \mathbb{P}(R \oplus \mathbb{K})$ be the projective completion of $R$, endowed with the $PG$-action induced by letting it act trivially on the $\mathbb{K}$-summand. Then, $R^{ss}_\theta$ is a $PG$-invariant open subset of $\mathbb{P}$ admitting a good quotient with projective quotient space. By \cite[Thm.~3.3]{Hau04}, there hence exists a $PG$-linearization in some power $H$ of the hyperplane bundle $\mathcal{O}(1)$ such that $R^{ss}_\theta = \mathbb{P}^{ss}_H$. We may therefore apply Corollary~\ref{introcor:Fanoquotients} to conclude that $M^{ss}_\mathcal{Q}(\mathbf{n}, \theta)$ = $\mathbb{P}^{ss}_H /\!/ PG$ is of Fano type, as claimed. 
\end{proof}

\subsection{Applications to collapsing of homogeneous bundles}
In this subsection, we apply our main result to study the singularities of collapsings of homogeneous bundles.

\begin{proof}[Proof of Thm.~\ref{introthm:Collapsing}]
We roughly follow the lines of the proof of~\cite[Thm. 3.10~(4)]{Lo20}, while staying in characteristic $0$ and at the same time being able to simplify some parts of the argument.
We denote by $q\colon G \times_{P} X \to W$ the collapsing map and by $L$ the reductive factor group in the Levi decomposition $P=L \ltimes U_P$. We may assume that $G$ is semisimple and simply connected by the considerations in~\cite[p.~12]{Lo20}; in particular, $\mathbb{K}[G]$ is then a factorial ring, see e.g.~\cite[Prop.~1]{Pop74}. Furthermore, as a consequence of \cite[Thm.~2 b)]{Ke76} we have $q_*(\mathcal{O}_{G \times_P X}) = \mathcal{O}_{G\cdot X}$. Therefore, as moreover $U_P$ acts trivially on $X$ by assumption, we obtain the following chain of isomorphisms
\begin{align*}\label{eq:chainofisos}
\mathbb{K}[G\cdot X] &\cong H^0\bigl(G \times_{P} X, \mathcal{O}_{G \times_{P} X}\bigr) \cong H^0\bigl((G \times_{U_P} X)/L, \mathcal{O}_{(G \times_{U_P} X)/L}\bigr)  \\ &\cong H^0\bigl((G/U_P \times X)/L, \mathcal{O}_{(G/U_P \times X)/L}\bigr) \cong (\mathbb{K}[G/U_P] \otimes \mathbb{K}[X])^L \\ &= (\mathbb{K}[G]^{U_P} \otimes \mathbb{K}[X])^L;
\end{align*}
here, one should note that $\mathbb{K}[G]^{U_P}$ is finitely generated by~\cite{Gro83}.
Theorem~\ref{introtm:reductive-quot} will imply the claim once we know that the variety $\mathrm{Spec}(\mathbb{K}[G]^{U_P})$ is of klt type.  However, as discussed in \cite[Lem.~3.7, Rem.~5.6]{Has12}, it is even factorial with rational singularities; see also \cite[Sect.~3, claim II]{Gro89}. In particular, $\mathbb{K}[G]^{U_P}$ has canonical Gorenstein singularities, so we are done.
\end{proof}

\begin{remark}{\em
\textcolor{black}{In the special case that $X$ equals the whole of $V$} and under the additional assumption that the reductive factor group $L$ in the Levi decomposition of $P$ is semisimple, in the above proof we may apply \cite[Thm.~II.3.17]{AGIV} to show that \textcolor{black}{$G \cdot X = G\cdot V$} has in fact canonical Gorenstein singularities.  }
\end{remark}

\bibliographystyle{habbrv}
\bibliography{BGLM_final}

\vspace{0.5cm}
\end{document}